\theoremstyle{plain}
\newtheorem{theorem}{Theorem}%[section]
\newtheorem{proposition}[theorem]{Proposition}
\newtheorem{lemma}[theorem]{Lemma}
\newtheorem{corollary}[theorem]{Corollary}
\newtheorem{remark}[theorem]{Remark}
\newtheorem{definition}[theorem]{Definition}
\newtheorem{assumption}[theorem]{Assumption}
\newcommand{\R}{\mathbb{R}}
\newcommand{\dd}{\, \mathrm{d}}
\newcommand{\intSmall}{\textstyle{\int} \displaystyle}
\begin{document}

\title{Hybrid Optimal Control Problems for a Class of Semilinear Parabolic Equations}
\date{\today}
\author{S\'ebastien Court\thanks{Institute for Mathematics and Scientific Computing, Karl-Franzens-Universit\"{a}t, Heinrichstrasse 36, 8010 Graz, Austria, email: {\tt sebastien.court@uni-graz.at}, {\tt laurent.pfeiffer@uni-graz.at}.} \and Karl Kunisch\thanks{Institute for Mathematics and Scientific Computing, Karl-Franzens-Universit\"{a}t, Heinrichstrasse 36, 8010 Graz, Austria, and Radon Institute, Austrian Academy of Sciences, email: {\tt karl.kunisch@uni-graz.at}.} \and Laurent Pfeiffer\footnotemark[1]}

\maketitle

\begin{abstract}
A class of optimal control problems of hybrid nature governed by semilinear parabolic equations is considered. These problems involve the optimization of switching times  at which the dynamics, the integral cost, and the bounds on the control may change. First- and second-order optimality conditions  are derived. The analysis is based on a reformulation involving a judiciously chosen transformation of the time domains. For autonomous systems and time-independent integral cost, we prove that the Hamiltonian is constant in time when evaluated along the optimal controls and trajectories. A numerical example is provided.
\end{abstract}
\hfill \\

\noindent{\bf Keywords:} Hybrid optimal control problem, Semilinear parabolic equations, PDE-constrained optimization, Optimality conditions, Transversality conditions.\\
\hfill \\
\noindent{\bf AMS subject classifications (2010):} 93C30, 35K58, 49K20, 90C46.

\tableofcontents

\section{Introduction}

This article is dedicated to the derivation of optimality conditions for the following class of optimal control problems of hybrid semilinear parabolic equations:
\begin{eqnarray*}
& & \min_{\begin{subarray}{c} 0= \tau_0 < \tau_1 < \hdots < \tau_K= T \\ u \in L^\infty((0,T) \times \Omega) \\ y \in \mathcal{Y}(0,T) \end{subarray}} \
\sum_{k=1}^{K} \left[ \int_{\tau_{k-1}}^{\tau_k} \int_{\Omega} \ell_k(t,x,y(t,x),u(t,x)) \dd x \dd t
+ \int_{\Omega} \phi_k(\tau_k,x,y(\tau_k,x)) \dd x \right] \\
& & \text{subject to: } \\
& & \ \quad u_k^-(x) \leq u(t,x) \leq u_k^+(x), \text{ for a.\,e.\@ $(t,x) \in (\tau_{k-1},\tau_k) \times \Omega$, $\forall k=1,...,K$}, \phantom{q_{q_{q_q}}} \\
& & \left\{\begin{array}{rcll}
\dot{y}(t,x)-\Delta y(t,x) - f_k(t,x,y(t,x),u(t,x)) & = & 0 & \text{in $(\tau_{k-1},\tau_k) \times \Omega$, $\forall k=1,...,K$}, \\
y(t,x)& = & 0 & \text{on $(0,T) \times \delta \Omega$}, \\
y(0,x)& = & y_0(x) & \text{in $\Omega$},
\end{array} \right.
\end{eqnarray*}
Further specifications on the problem data as well as the definition of the space $\mathcal{Y}(0,T)$ will be given below. Continuity in time and space of the state variable $y$, in particular at the times $\tau_1$,...,$\tau_{K-1}$, will be guaranteed.
Here the term {\it hybrid} refers to the following  three features of the problem:
\begin{itemize}
\item At time $\tau_k$, the nonlinear part of the system may change, passing from $f_{k}$ to $f_{k+1}$.
\item At time $\tau_k$, the constraints on the control may change, passing from $u_{k}^-$ and $u_{k}^+$ to $u_{k+1}^-$ and $u_{k+1}^+$ respectively.
\item The cost function incorporates terms depending on the value of the state at time $\tau_k$.
\end{itemize}
The optimization variables $\tau_1$,...,$\tau_{K-1}$ are called switching times. They are not fixed a priori, but part of the optimization problem. A more general definition of hybrid control problems is provided in~\cite{GP05}. In that reference, as well as in other publications in the field, the dynamics $f_k$, the integral cost $\ell_k$ and the control constraints $u_k^-$ and $u_k^+$ used on the time interval $(\tau_{k-1},\tau_k)$ can be chosen in a finite set. This combinatorial aspect is not considered in the present paper.

In this article, for the sake of clarity, the analysis is done for $K=2$: There is only one switching time to be optimized.
The provided results can, however, be extended to the case $K>2$ without essential  difficulties. They can actually be extended to the following situations:
\begin{itemize}
\item The state variable and the control variable are multi-dimensional.
\item The functions $f_k$, $\ell_k$, $\phi_k$ all depend on all the switching times.
\item Constraints on the switching times are considered.
\end{itemize}
The boundary conditions can also be changed to mixed boundary conditions, and a Neumann boundary control can also be considered.
However, our analysis cannot be extended in a straightforward manner to the situation where the functions $u_1^-$,...,$u_K^-$ and $u_1^+$,...,$u_K^+$ depend on $t$ and/or $\tau_1$,...,$\tau_{K-1}$.

Semilinear parabolic equations have been used in various fields of applications. There is also a rich literature on the optimal control of such equations. In chemical kinetics, they can model the evolution of the concentration of the substances involved in a chemical reaction (see for example~\cite{BJT10,MBH16}). In ecology, they can describe  the evolution of interacting species (see for example~\cite{AAC11}). In physics, they can model the phase change (e.g.\@ from solid to liquid) of materials (see~\cite{Hei97}). In neurobiology, the Hodgkin-Huxley and FitzHugh-Nagumo equations describe the transmission of electrical signals in axons (see~\cite{KPR16}).

While hybrid problems of finite dimensional systems have been extensively studied in the last years, hybrid problems of systems described by partial differential equations have received little attention so far. Let us motivate each of the three features of the hybrid problem under consideration.
\begin{itemize}
\item Many of the semilinear parabolic models used in the literature involve physical coefficients which are often assumed to be constant. When these physical coefficients depend on the experimentation conditions (such as temperature or pressure), then a sudden change of conditions will cause a change of the coefficients, and therefore a change of dynamics. In chemical kinetics for example, the reaction rate is related to the temperature, as stated by the Arrhenius' law. In population dynamics, the reproduction and death rates of some species can be quickly modified by pesticide application, for example.
\item In some situations, the control of the system is realized through different actuators, which cannot be all activated at the same time (see for example~\cite{CRK16,CRKB16}). If the order of utilization of the different actuators is fixed, then a change of the control constraints at the switching time can be used to describe the change of actuators.  We note here that our assumptions allow the equality $u^-_k =u^+_k$ on subsets of $\Omega$.

In some other situations, the actuator should not be utilized all the time. The time interval $[0,T]$ can then be decomposed into active and inactive phases. Our framework enables to optimize the switching times between active and inactive phases. It is, for example, possible to penalize the duration of active phases. In this respect, our approach can be seen as complementary to the one consisting in adding a sparsity-promoting (with respect to time) penalization term to the cost functional. See~\cite{CK16}, for example. Indeed, while the use of such a penalization term ensures the sparsity of the optimal control in a somehow indirect way, it enables us to find the global structure  of the  solution (such as the ordering of the actuators or the number of active and inactive phases). On the contrary, in our approach, the structure of the solution should be known, and sparsity can be imposed in the model.

Let us also mention that sometimes, it is only possible to implement a piecewise-constant control. This can also be modeled by a change of the control constraints at a switching time. Optimal control problems with piecewise-constant control are studied in e.g.~\cite{BT16}, where they are called {\it optimal sampled-data control problems}.

\item  Finally, the introduction of a switching time in the cost function has also its own interest. One can be interested, for example, in taking into account the minimum taken by a function of the state variable. This can be done by introducing a switching time $\tau$ at which the minimum of the function of the state variable is reached. Transforming the problem into a maximization problem, we obtain an $L^\infty$-term in the cost function, as is explained in~\cite{CKP16}.
\end{itemize}

As far as we know, this article is the first one dealing with the derivation of optimality conditions for control problems of semilinear parabolic equations with controls and  switching times simultaneously. In~\cite{RH16}, first-order optimality conditions for switching times are provided for a hybrid problem of semilinear parabolic equations. The only controls considered in~\cite{RH16} are the switching times themselves: In that framework, the state variable is differentiable with respect to the switching times. This property is not satisfied anymore when a  control is incorporated.
Moreover, as far as we know, only the recent reference~\cite{LGW15} deals with second-order optimality conditions for finite-dimensional hybrid systems.

The theory of first- and second-order optimality conditions for optimal control problems of semilinear parabolic equations (without switching times) is now well-established. It is described in the textbooks~\cite{HPUU09,Tro10} and was developed in the last two decades. We invite the reader to refer to~\cite{CdlRT08,CT02,RT00,Sil16,BS15} and the references therein on this topic.
The main difficulty in the derivation of sufficient optimality conditions, called {\it two-norm discrepancy}, lies in the fact that the cost function is twice differentiable for the $L^\infty$-norm, but one can only assume that its Hessian is coercive for the $L^2$-norm on an appropriate subspace.

The derivation of optimality conditions with respect to the switching time for our hybrid optimal control problem constitutes a significant challenge. Indeed, the cost function is  not differentiable with respect to the switching times, since the controls are not continuous, in general. For the same reason, the control-to-state mapping is  not differentiable with respect to the switching times.
First-order optimality conditions with respect to the switching times -- called transversality conditions -- can be derived by designing specific needle perturbations, as in~\cite{GP05}. Here, we follow another approach  to obtain first- and second-order optimality conditions. It consists in introducing a change of variables (in time) in the evolution equation. The idea goes back to -- at least --~\cite{RZ99,RZ00}, and has been also used in~\cite{CKP16,IK10,LGW15}, for example. The change of variables that we use has a crucial feature: After reformulation, the discontinuity occurring in the dynamics, the control constraints, and the integral cost occurs at a fixed time. The reformulated problem is then a parametric optimal control problem. The reformulation is not only useful for the desired sensitivity analysis but also for solving the problem numerically.

In the context of optimal control of ordinary differential equations, it is well-known that the Hamiltonian is constant in time when evaluated along optimal controls and trajectories, provided that  the dynamics and the integral cost are autonomous. This property is also true for the optimal control problems of semilinear parabolic equations, without switching times (see~\cite{Fat94}). We extend it to the class of hybrid optimal control problems considered in this paper. Our approach is inspired by~\cite{RZ99}.

The article is organized as follows. In Section~\ref{sectionSetting}, we formulate the required assumptions on the data of the problem and we reformulate it, with a change of variables. In Section~\ref{sectionDerivatives}  the derivatives of the cost functional are calculated. Section~\ref{sectionRegularity} contains a technical result, used for the second-order analysis. First-order optimality conditions (Theorem~\ref{theoremCN1}) and necessary and sufficient second-order optimality conditions (Theorems~\ref{theoremCN2} and~\ref{theoremCS2}) are given in Section~\ref{sectionOptiCond}. Constancy of the Hamiltonian (Theorem~\ref{theoremConstancy}) is provided in Section~\ref{sectionConstancy}. A numerical illustration is presented in section~\ref{best-sec}. In the appendix, we recall some properties of Nemytskii operators.

\section{Setting} \label{sectionSetting}

\subsection{Formulation of the problem}

Let $T>0$, and $\Omega \subset \R^n$ be a bounded domain with smooth boundary. Denote $Q_0= (0,T) \times \Omega$, $\Sigma_0= (0,T) \times \partial \Omega$,
$Q= (0,2) \times \Omega$ and  $\Sigma= (0,2) \times \partial \Omega$.
Given $t_1<t_2$, we define the function spaces
\begin{eqnarray*}
W(t_1,t_2)&= & L^2 \big( (t_1,t_2),H_0^1(\Omega) \big) \cap W^{1,2} \big( (t_1,t_2),H^{-1}(\Omega) \big), \\
\mathcal{Y}(t_1,t_2)& = & L^2 \big((t_1,t_2),H^{2}(\Omega) \cap H_0^1(\Omega) \big) \cap H^1 \big( (t_1,t_2),L^2(\Omega) \big) \cap C\big( (t_1,t_2) \times \Omega \big).
\end{eqnarray*}
The space $\mathcal{Y}(t_1,t_2)$ is a Banach space, when equipped with the norm given by:
\begin{eqnarray*}
\| y \|_{\mathcal{Y}(t_1,t_2)}& = &
\| y \|_{L^2((t_1,t_2)\times \Omega)}
+ \| \dot{y} \|_{L^2((t_1,t_2)\times\Omega)} \\
& & + \, \| D_x y \|_{L^2((t_1,t_2) \times \Omega, \R^n)}
+ \| D_{xx}^2 y \|_{L^2((t_1,t_2) \times \Omega, \R^{n \times n}) }
+ \| y \|_{L^\infty((t_1,t_2)\times \Omega)}.
\end{eqnarray*}
We shall write $\mathcal{Y}$ instead of $\mathcal{Y}(0,2)$.
Further we recall the continuous embedding $W(t_1,t_2) \hookrightarrow C([t_1,t_2], L^2(\Omega))$.

The following functions are given: $f_1$, $f_2$, $\ell_1$, $\ell_2: [0,T] \times \Omega \times \R \times \R \rightarrow \R$, $\phi_1:[0,T] \times \Omega \times \R \rightarrow \R$, and $\phi_2: \Omega \times \R \rightarrow \R$.
We assume that these functions satisfy assumption~\ref{hypRegularity} stated below.
In this assumption, the notions of boundedness and Lipschitz continuity of order 2 are used. They are recalled in Definition~\ref{defNemytskii} given in Appendix~\ref{appendix}.

\begin{assumption} \label{hypRegularity}
\hfill
\begin{enumerate}
\item The functions $f_1$, $f_2$, $\ell_1$, $\ell_2$, $\phi_1$, and $\phi_2$ are measurable. For a.\,e.\@ $x \in \Omega$, they are twice continuously differentiable with respect to all the other variables. They satisfy the boundedness condition (with respect to $x$) and the Lipschitz condition of order 2 with respect to the other variables ($(t,y,u)$ for $f_1$, $f_2$, $\ell_1$, and $\ell_2$, $(t,y)$ for $\phi_1$, and $y$ for $\phi_2$).
\item For all $R \geq 0$, there exists a constant $C_R > 0$ such that for a.\,e.\@ $(t,x,y,u) \in Q \times \R^2$,
\begin{eqnarray*}
|u| \leq R & \Rightarrow &
\left( f_1(t,x,y,u)y \leq C_R (1 + y^2) \quad \text{and} \quad
f_2(t,x,y,u)y \leq C_R (1 + y^2) \right).
\end{eqnarray*}
\end{enumerate}
\end{assumption}

We can now formulate the hybrid system under consideration.
Let $\tau \in (0,T)$, $y_0 \in H_0^1(\Omega) \cap C(\bar{\Omega})$, and $v \in L^\infty(Q)$. Consider:
\begin{eqnarray}
\left\{\begin{array}{rcll}
\dot{y}(t,x) - \Delta y(t,x) - f_1(t,x,y(t,x),v(t,x)) & = & 0 & \text{ in $(0,\tau) \times \Omega$}, \\
\dot{y}(t,x) - \Delta y(t,x) - f_2(t,x,y(t,x),v(t,x)) & = & 0 & \text{ in $(\tau,T) \times \Omega$}, \\
y(t,x) & = & 0 & \text{ on $\Sigma_0$}, \\
y(0,x)& = & y_0(x) & \text{ in $\Omega$}.
\end{array}\right.
\end{eqnarray}
Under assumption~\ref{hypRegularity}, it is well-known that this system has a unique weak solution in $W(0,T)$, that we denote $S_0(v,\tau)$. It can be shown  that the solution lies in $\mathcal{Y}(0,T)$, see~\cite[Proposition 2.3]{Sil16} and~\cite[Proposition 2.1]{HY95}.
For $y \in \mathcal{Y}(0,T)$, $v \in L^\infty(Q_0)$, and $\tau \in (0,T)$ we define the cost and the reduced cost functionals:
\begin{eqnarray*}
\tilde{J}_0(y,v,\tau)& = & \int_0^\tau \int_{\Omega} \ell_1(t,x,y(t,x),v(t,x)) \dd x \dd t + \int_{\Omega} \phi_1(\tau,x,y(\tau,x)) \dd x \\
& & + \int_\tau^T \int_{\Omega} \ell_2(t,x,y(t,x),u(t,x)) \dd x \dd t + \int_{\Omega} \phi_2(x,y(T,x)) \dd x, \\
J_0(v,\tau)& = & \tilde{J}_0(S_0(v,\tau),v,\tau).
\end{eqnarray*}
Finally, we fix four functions $u_1^- \leq u_1^+ \in L^\infty(\Omega)$ and $u_2^- \leq u_2^+ \in L^\infty(\Omega)$ and we define:
\begin{eqnarray*}
\mathcal{V}_{\text{ad}}& = & \Big\{ (v,\tau) \in L^\infty(Q_0) \times (0,T) \,|\,
u_1^-(x) \leq v(t,x) \leq u_1^+(x), \text{ for a.\,e.\@ }t \in (0,\tau), \\
& & \qquad \qquad \qquad \qquad \qquad \qquad 
u_2^-(x) \leq v(t,x) \leq u_2^+(x), \text{ for a.\,e.\@ }t \in (\tau,T) \Big\}.
\end{eqnarray*}
The optimal control problem to be investigated is given by
\begin{equation*} \label{problem0} \tag{$\mathcal{P}_0$}
\min_{(v,\tau) \in \mathcal{V}_{\text{ad}} } J_0(v,\tau).
\end{equation*}

\paragraph{Notation.}

Given a function $y \in L^\infty(Q)$, we denote for a.\,e.\@ $t \in [0,T]$ by $y(t)$ the function $x \in \Omega \mapsto y(t,x)$.
Given $t \in [0,T]$, $t' \in [0,T]$, $y$ and $u \in L^\infty(Q)$, we denote by $F_1(t',y(t),u(t))$ the function $x \in \Omega \mapsto f_1(t',x,y(t,x),u(t,x))$. In the same manner, we associate with the functions $f_2$, $\ell_1$, $\ell_2$, $\phi_1$, and $\phi_2$ the functions $F_2$, $L_1$, $L_2$, $\Phi_1$, and $\Phi_2$ respectively. This enables us to omit formally the dependence on $x$ for these functions.
We further define:
\begin{eqnarray} \label{eqConvention}
F: (s,t,y,u) \in (0,2) \times (0,T) \times L^\infty(\Omega) \times L^\infty(\Omega)
\mapsto
\begin{cases}
\begin{array}{ll}
F_1(t,y,u) & \text{ if $s \in (0,1)$}, \\
F_2(t,y,u) & \text{ if $s \in (1,2)$}.
\end{array}
\end{cases} \in L^\infty(\Omega)
\end{eqnarray}
Similarly, we define the mapping $L(s,\cdot,\cdot,\cdot)$ equal to $L_1(\cdot,\cdot,\cdot)$ for $s \in (0,1)$, equal to $L_2(\cdot,\cdot,\cdot)$ for $s \in (1,2)$. We later use a similar convention for the Hamiltonian.
Similarly, we set:
\begin{eqnarray*}
u^-(s,x)= \begin{cases}
\begin{array}{ll}
u_1^-(x) & \text{ if $s \in (0,1)$}, \\
u_2^-(x) & \text{ if $s \in (1,2)$},
\end{array}
\end{cases}
\quad
u^+(s,x)= \begin{cases}
\begin{array}{ll}
u_1^+(x) & \text{ if $s \in (0,1)$}, \\
u_2^+(x) & \text{ if $s \in (1,2)$}.
\end{array}
\end{cases}
\end{eqnarray*}
The interest of this notation will be clear in Section~\ref{sectionChangeOfVar}.

The partial derivative with respect to $x_i$ of a mapping $g \colon x=(x_1,...,x_n) \in X_1 \times ... \times X_n \rightarrow Y$ (where $X_1$,...,$X_n$ are Banach spaces) is denoted: $D_{x_i} g(x) \in \mathcal{L}(X_i,Y)$. The second-order partial derivative with respect to $x_i$ and $x_j$ will be denoted $D_{x_i,x_j}^2g(x)$ and will be seen as a bilinear mapping from $X_i \times X_j$ to $Y$.
The first-order and second-order derivatives with respect to all variables are denoted by $Dg(x)$ and $D^2g(x)$, respectively. Twice continuously Fr\'echet differentiable mappings will sometimes be simply called $C^2$ mappings.
Note that later, the Hamiltonian is never differentiated with respect to the variable $p$.

\subsection{Change of variables} \label{sectionChangeOfVar}

The formulation of problem~\eqref{problem0} does not enable us to derive optimality conditions. Indeed, the feasible controls are not continuous at time $\tau$ and the corresponding state trajectories are not continuously differentiable (in time), in general. Therefore, for fixed values of $v$ and $y$, the cost function $\tau \mapsto \tilde{J}_0(y,v,\tau)$ is not differentiable and for a fixed value of $v$, the control-to-state mapping $\tau \mapsto S(v,\tau)$ is not differentiable neither. Eventually, the mapping $(v,\tau) \mapsto J_0(v,\tau)$ is not differentiable with respect to $\tau$.
In the formulation of problem~\eqref{problem0}, an additional difficulty lies in the fact that the set of feasible controls depends on $\tau$.
These difficulties can be overcome by reformulating the problem with the following change of variable.
Consider the mapping $\pi:[0,2] \times (0,T) \rightarrow [0,T]$ defined by:
\begin{eqnarray*}
\pi(t,\tau) =
\begin{cases}
\begin{array}{ll}
\tau t & \text{ if $t \in [0,1]$,} \\
(T-\tau)t - T + 2 \tau & \text{ if $t \in [1,2]$}.
\end{array}
\end{cases}
\end{eqnarray*}
Observe that: $\pi(0,\tau)= 0$, $\pi(1,\tau)= \tau$, and $\pi(2,\tau)= T$. For future reference, we introduce the time-derivative $\dot{\pi}$ of $\pi$ (with respect to $t$) and the partial derivative of $\pi$ and $\dot{\pi}$ with respect to $\tau$, denoted respectively by $\pi_\tau$ and $\dot{\pi}_\tau$. It holds:
\begin{align*}
\dot{\pi}(t,\tau)=&
\begin{cases}
\begin{array}{ll}
\tau & \text{ if $t \in [0,1]$}, \\
(T-\tau) & \text{ if $t \in [1,2]$,}
\end{array}
\end{cases}
\dot{\pi}_\tau(t)=
\begin{cases}
\begin{array}{ll}
1 & \text{ if $t \in [0,1]$}, \\
-1 & \text{ if $t \in [1,2]$},
\end{array}
\end{cases} \\
\pi_\tau(t)=& \begin{cases}
\begin{array}{ll}
t & \text{ if $t \in [0,1]$, } \\
2-t & \text{ if $t \in [1,2]$}.
\end{array}
\end{cases}
\end{align*}
Note that we omit to write the variable $\tau$, for $\pi_\tau$ and $\dot{\pi}_\tau$.
Accordingly we define a new class  of feasible controls by
\begin{eqnarray*}
\mathcal{U}_{\text{ad}} & = & \Big\{ u \in L^\infty(Q) \,|\,
u_1^-(x) \leq u(t,x) \leq u_1^+(x), \text{ for a.\,e.\@ }t \in (0,1) \\
& & \qquad \qquad \qquad \ 
u_2^-(x) \leq u(t,x) \leq u_2^+(x), \text{ for a.\,e.\@ }t \in (1,2) \Big\},
\end{eqnarray*}
and consider the following mapping
\begin{eqnarray*}
\chi: (v,\tau) \in \mathcal{V}_{\text{ad}} \mapsto (u,\tau) \in \mathcal{U}_{\text{ad}} \times (0,T), \quad \text{where: } \quad u(t,x)= v(\pi(t,\tau),x).
\end{eqnarray*}
One can easily check that this is a bijection. This mapping will enable us to justify the well-posedness of the reparameterized system and then to establish the equivalence of problems~\eqref{problem0} and~\eqref{problem1}.

Given $(u,\tau) \in \mathcal{U}_{\text{ad}} \times (0,T)$, we set $S(u,\tau)= S_0 \big( \chi^{-1}(u,\tau) \big) \circ \pi(\cdot,\tau)$, where $\chi^{-1}$ is the inverse of $\chi$.
We observe that
$S(u,\tau) \in \mathcal{Y}$ is the strong variational solution to the following hybrid system:
\begin{eqnarray} \label{eqSystemReparam}
\left\{
\begin{array}{rcll}
\dot{y}(t) - \dot{\pi}(t,\tau) \big[ \Delta y(t) + F \big(t, \pi(t,\tau),y(t), u(t) \big) \big]
& = & 0 & \text{ in $Q$}, \\
y(t) & = & 0 & \text{ on $\Sigma$}, \\
y(0) & = & y_0 & \text{ in $\Omega$}.
\end{array}
\right.
\end{eqnarray}
For $y \in \mathcal{Y}$, $u \in L^\infty(Q)$, and $\tau \in (0,T)$, we set:
\begin{equation*}
\tilde{J}(y,u,\tau)= \int_{Q} \dot{\pi}(t,\tau) L\big(t, \pi(t,\tau),y(t),u(t) \big) \dd x \dd t + \int_{\Omega} \big[ \Phi_1(\tau,y(1)) + \Phi_2(y(2)) \big] \dd x,
\end{equation*}
and we define
\begin{eqnarray*}
J(u,\tau)= & \tilde{J}(S(u,\tau),u,\tau).
\end{eqnarray*}
These considerations lead to the \emph{reparameterized} problem:
\begin{equation*} \label{problem1} \tag{$\mathcal{P}_1$}
\min_{(u,\tau) \in \mathcal{U}_{\text{ad}} \times (0,T)} J(u,\tau).
\end{equation*}

\begin{lemma} \label{lemmaFirstEquivalence}
For all $(v,\tau) \in \mathcal{V}_{\text{ad}}$, we have $J(\chi(v,\tau))= J_0(v,\tau)$.
Consequently, if $(v,\tau) \in \mathcal{V}_{\text{ad}}$ is a global solution to~\eqref{problem0}, then $\chi(v,\tau)$ is a global solution to~\eqref{problem1} and conversely, if $(u,\tau) \in \mathcal{U}_{\text{ad}} \times (0,T)$ is a global solution to~\eqref{problem1}, then $\chi^{-1}(u,\tau)$ is a global solution to~\eqref{problem0}.
\end{lemma}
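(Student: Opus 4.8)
The statement has two parts. The substantive one is the pointwise identity $J(\chi(v,\tau))= J_0(v,\tau)$; the equivalence of the two problems is then an immediate consequence, since $\chi$ is a bijection. The plan is therefore to establish the cost identity by a direct change of variables in time, and then to deduce the optimality correspondence from it.

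To prove the identity, I would fix $(v,\tau) \in \mathcal{V}_{\text{ad}}$, write $(u,\tau)= \chi(v,\tau)$ so that $u(t,x)= v(\pi(t,\tau),x)$, and abbreviate $z := S_0(v,\tau) \in \mathcal{Y}(0,T)$. By the definition $S(u,\tau)= S_0(\chi^{-1}(u,\tau)) \circ \pi(\cdot,\tau)$, the reparameterized state $y := S(u,\tau)$ satisfies $y(t)= z(\pi(t,\tau))$ for a.e.\ $t \in (0,2)$; recall also that $u(t)= v(\pi(t,\tau))$. I would then compare $\tilde{J}(y,u,\tau)$ with $\tilde{J}_0(z,v,\tau)$ term by term.

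For the integral cost, split $\int_Q$ into the contributions of $t \in (0,1)$ and $t \in (1,2)$. On each of these subintervals $t \mapsto \pi(t,\tau)$ is affine and strictly increasing (using $\tau>0$ and $T-\tau>0$), hence a bi-Lipschitz homeomorphism onto $(0,\tau)$, respectively $(\tau,T)$. I would apply the substitution $r= \pi(t,\tau)$, whose differential $\dd r= \dot{\pi}(t,\tau)\dd t$ cancels exactly the weight $\dot{\pi}(t,\tau)$ appearing in $\tilde{J}$. Using the selection convention for $L$ (equal to $L_1$ on $(0,1)$ and to $L_2$ on $(1,2)$) together with $y(t)= z(\pi(t,\tau))$ and $u(t)= v(\pi(t,\tau))$, the first piece becomes $\int_0^\tau \int_\Omega \ell_1(r,x,z(r,x),v(r,x)) \dd x \dd r$ and the second $\int_\tau^T \int_\Omega \ell_2(r,x,z(r,x),v(r,x)) \dd x \dd r$, i.e.\ precisely the two integral terms of $\tilde{J}_0$. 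For the remaining terms, $\pi(1,\tau)= \tau$ and $\pi(2,\tau)= T$ give $y(1)= z(\tau)$ and $y(2)= z(T)$, so that $\int_\Omega \Phi_1(\tau,y(1)) \dd x= \int_\Omega \phi_1(\tau,x,z(\tau,x)) \dd x$ and $\int_\Omega \Phi_2(y(2)) \dd x= \int_\Omega \phi_2(x,z(T,x)) \dd x$, matching the endpoint terms of $\tilde{J}_0$. Summing the four contributions yields $J(\chi(v,\tau))= \tilde{J}(y,u,\tau)= \tilde{J}_0(z,v,\tau)= J_0(v,\tau)$.

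For the optimality statement, I would use that $\chi\colon \mathcal{V}_{\text{ad}} \to \mathcal{U}_{\text{ad}} \times (0,T)$ is a bijection (the constraints defining $\mathcal{U}_{\text{ad}}$ being exactly the image under $r= \pi(t,\tau)$ of those defining $\mathcal{V}_{\text{ad}}$). Combined with the cost identity, $\chi$ maps feasible points to feasible points without changing the objective value, and likewise for $\chi^{-1}$; hence the two infima coincide and global solutions of~\eqref{problem0} and~\eqref{problem1} are in one-to-one correspondence via $\chi$. The only genuinely delicate point is the justification of the change of variables: one must check measurability of the composed maps and that the affine substitution is licit for Lebesgue integrals, which is routine since $\pi(\cdot,\tau)$ is piecewise affine and strictly monotone, and that $z \circ \pi(\cdot,\tau)$ is regular enough for all integrals to be finite, which is already guaranteed by the fact, recorded after~\eqref{eqSystemReparam}, that $S(u,\tau) \in \mathcal{Y}$.
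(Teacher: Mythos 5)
Your proof is correct: the paper itself leaves this verification to the reader as ``straightforward,'' and your argument --- the piecewise-affine substitution $r=\pi(t,\tau)$ whose Jacobian $\dot{\pi}(t,\tau)$ cancels the weight in $\tilde{J}$, the matching of the endpoint terms via $\pi(1,\tau)=\tau$ and $\pi(2,\tau)=T$, and the transfer of global optimality through the cost-preserving bijection $\chi$ --- is exactly the intended verification. Nothing is missing; your closing remarks on measurability and the regularity $S(u,\tau)\in\mathcal{Y}$ appropriately cover the only points requiring care.
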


The verification is straightforward and therefore it is left to the reader. Further below we give optimality conditions for problem~\eqref{problem1}. Our sufficient second-order optimality conditions will ensure the local optimality property with respect to  the $L^\infty$-norm for problem~\eqref{problem1} (for the control variable). While there is an equivalence between globally optimal solutions, as stated in the above lemma, there are, however, no obvious notions of local optimality for~\eqref{problem0} and~\eqref{problem1} defined in such a way that if a given $(\bar{u},\bar{\tau})$ is a local solution to~\eqref{problem1}, then $\chi^{-1}(\bar{u},\bar{\tau})$ is a local solution to~\eqref{problem0}, and conversely.

\section{First- and second-order derivatives of the cost function} \label{sectionDerivatives}

In this section, we compute the first- and second-order derivatives $DJ(\bar{u},\bar{\tau})$ and $D^2J(\bar{u},\bar{\tau})$, at a fixed  $(\bar{u},\bar{\tau}) \in \mathcal{U}_{\text{ad}} \times (0,T)$. We further denote $\bar{y}=S(\bar{u},\bar{\tau})$ and henceforth  use the following notation:
\begin{eqnarray*}
F[t]= F(t,\pi(t,\bar{\tau}),\bar{y}(t),\bar{u}(t)).
\end{eqnarray*}
The analogous  notation is used  for $L$ and the derivatives of $F$ and $L$.

\subsection{Linearization of the system}

In this subsection,  the regularity of the control-to-state operator is investigated. We first give a regularity result on linear parabolic equations, Lemma~\ref{lemmaRegularityLinParabolic}, which is used throughout the paper.

\begin{lemma} \label{lemmaRegularityLinParabolic}
We consider
\begin{eqnarray}\label{eq35}
\left\{ \begin{array}{rcll}
\dot{z}(t,x)-\nu(t)[\Delta z(t,x)+b(t,x)z(t,x)] & = & \xi (t) & \text{ in $Q$} \\
z(t,x) & = & 0 & \text{ on $\Sigma$} \\
z(0,x) & = & z_0 & \text{ in $\Omega$},
\end{array}
\right.
\end{eqnarray}
where $\nu$ satisfies $a.\, e.$ in $(0,T)$ the condition $\frac{1}{c}\leq \nu \leq c$ for some constant $c>0$, $\|b \|_{L^{\infty}(Q)}\leq c$, and $\xi \in L^2(Q)$.
\begin{itemize}

\item [(a)]For all $z_0 \in L^{2}(\Omega)$, there exists a weak solution $z\in W(0,2)$ of~\eqref{eq35} and a constant $C$ depending on $c$ such that
\begin{eqnarray}\label{eq37a}
\|z\|_{W(0,2)} & \leq & C(\|\xi\|_{{L^2}(Q)}+\|z_0\|_{L^{2}(\Omega)}).
\end{eqnarray}
\item [(b)] If moreover $z_0 \in L^{\infty}(\Omega)$, we  have $z\in W(0,2)\cap L^{\infty}(Q) $ and there exists a constant $C$ depending on $c$ such that
\begin{eqnarray}\label{eq37}
\|z\|_{L^{\infty}(Q)} & \leq & C(\|\xi\|_{{L^2}(Q)}+\|z_0\|_{L^{\infty}(\Omega)}).
\end{eqnarray}
\item [(c)] For all $z_0 \in H^1_0(\Omega)\cap C(\bar \Omega)$, there exists a strong variational solution $z\in \mathcal{Y}$ of~\eqref{eq35} and a constant $C$ depending on $c$ such that
\begin{eqnarray}\label{eq36}
\|z\|_{\mathcal{Y}} & \leq & C(\|\xi\|_{{L^2}(Q)}+\|z_0\|_{H^1_0(\Omega)} +\|z_0\|_{C(\bar \Omega)}).
\end{eqnarray}
\end{itemize}
\end{lemma}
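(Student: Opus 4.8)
The plan is to eliminate the time-dependent coefficient $\nu$ by a change of the time variable, thereby reducing all three assertions to the classical regularity theory for an equation with \emph{constant} diffusion coefficient, namely $\partial_\sigma \tilde z - \Delta \tilde z - \tilde b \tilde z = \tilde\xi$. Since $\tfrac1c \le \nu \le c$ a.e., the map $\sigma(t) = \int_0^t \nu(r)\dd r$ is an increasing bi-Lipschitz bijection of $[0,2]$ onto $[0,\Theta]$, with $\Theta = \sigma(2) \in [2/c,\, 2c]$ and $\tfrac1c \le \sigma' \le c$. Writing $t(\cdot)$ for its inverse and setting $\tilde z(\sigma)= z(t(\sigma))$, $\tilde b(\sigma)= b(t(\sigma))$, $\tilde\xi(\sigma)= \xi(t(\sigma))/\nu(t(\sigma))$, a direct computation turns~\eqref{eq35} into $\partial_\sigma \tilde z - \Delta \tilde z - \tilde b \tilde z = \tilde\xi$ on $(0,\Theta)\times\Omega$, with the same boundary and initial data. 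Because the substitution is bi-Lipschitz with constants depending only on $c$, and because $\|\tilde b\|_{L^\infty}\le c$ and $\|\tilde\xi\|_{L^2} \le c^{1/2}\|\xi\|_{L^2}$ (the Jacobian being $\nu$), the norms of $z$ and $\tilde z$ in each of the spaces $W$, $L^\infty(Q)$ and $\mathcal{Y}$ are equivalent with constants depending only on $c$. It therefore suffices to establish the three estimates for the reduced equation and to transfer them back.

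For (a) I would invoke the standard existence theory of Lions for parabolic equations with measurable-in-time coefficients: the bilinear form $a(\sigma;z,\varphi)= (\nabla z,\nabla\varphi)_{L^2} - (\tilde b z,\varphi)_{L^2}$ is bounded and satisfies the G\aa{}rding inequality $a(\sigma;z,z) \ge \|\nabla z\|_{L^2}^2 - c\,\|z\|_{L^2}^2$, with constants depending only on $c$. This yields a unique weak solution $\tilde z \in W(0,\Theta)$; testing the equation with $\tilde z$ and combining G\aa{}rding's inequality with Gr\"onwall's lemma produces~\eqref{eq37a}. For (c), with $z_0 \in H^1_0(\Omega)\cap C(\bar\Omega)$, I would treat $\tilde b \tilde z + \tilde\xi$ as a right-hand side in $L^2(Q)$ (its $L^2$-norm being already controlled through part~(a)) and apply maximal $L^2$-parabolic regularity for $-\Delta$, together with the continuity statement for data in $H^1_0\cap C(\bar\Omega)$; these are precisely the ingredients recalled in~\cite[Proposition 2.3]{Sil16} and~\cite[Proposition 2.1]{HY95}, and they give $\tilde z \in L^2(H^2\cap H^1_0)\cap H^1(L^2)$ with the estimate~\eqref{eq36}, the boundedness/continuity component of $\mathcal{Y}$ being supplied by part~(b).

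The main obstacle is (b), the $L^\infty$-bound, since no Sobolev embedding produces $L^\infty(Q)$ directly from $W(0,\Theta)$. I would split $\tilde z = \tilde z_1 + \tilde z_2$, where $\tilde z_2$ carries the initial datum with $\tilde\xi=0$ and $\tilde z_1$ carries $\tilde\xi$ with zero initial datum. For $\tilde z_2$, the comparison principle applied to $\partial_\sigma w - \Delta w - \tilde b w = 0$ gives $\|\tilde z_2\|_{L^\infty(Q)} \le e^{c\Theta}\|z_0\|_{L^\infty(\Omega)}$, after absorbing the bounded zero-order term $\tilde b$ into an exponential weight. For $\tilde z_1$, the essential boundedness is obtained by the truncation method of Stampacchia (De Giorgi), testing the equation with the truncated functions $(\tilde z_1-k)^+$ and running the resulting level-set iteration; this is the technical heart of the argument, and it is exactly the type of $L^\infty$-estimate underlying the references~\cite{Sil16,HY95}.

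The delicate points in~(b) are twofold: one must verify that the iteration constants depend only on $c$ (they enter solely through the G\aa{}rding constant and the Sobolev embedding constants of $\Omega$, both $c$- and $\nu$-independent after the time change), and one must absorb the zero-order contribution $\tilde b\,\tilde z_1$ into the iteration, which is harmless because $\|\tilde b\|_{L^\infty}\le c$. Transferring the resulting bound back through the bi-Lipschitz time change, whose sup-norm in time is invariant, then yields~\eqref{eq37} with a constant $C=C(c)$, which in turn completes~(c). Throughout, the fact that $\nu$ appears as a common factor in front of the whole elliptic operator is what makes the reduction exact; were $\nu$ to multiply only $\Delta$, the lower-order term would not transform as cleanly, but that is not the situation here.
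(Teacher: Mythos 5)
Your proposal is correct, but it takes a genuinely different route from the paper. You eliminate $\nu$ at the outset through the exact bi-Lipschitz time reparameterization $\sigma(t)=\int_0^t \nu$ --- legitimate precisely because $\nu$ multiplies the whole bracket $\Delta z + bz$, as you observe, and amusingly the inverse of the very change of variables that produces $\nu=\dot{\pi}(\cdot,\tau)$ in the paper's application --- and then invoke constant-coefficient theory: Lions' variational framework with G\aa{}rding and Gr\"onwall for (a), a splitting into a homogeneous part (maximum principle with exponential weight) plus a De Giorgi--Stampacchia truncation for (b), and maximal $L^2$-regularity plus a continuity result for (c). The paper instead applies the Ladyzhenskaya--Solonnikov--Ural'tseva theory directly to the variable-coefficient equation, since \cite{LSU68} already covers merely measurable time-dependent coefficients: \cite[Theorem 2.1, p.~143, and Theorem 7.1, p.~181]{LSU68} for (a)--(b), and \cite[Theorem 6.1, p.~178, and (6.10), p.~176]{LSU68} for the $W^{2,1}_2$-part of (c); only for the final continuity assertion does it freeze the coefficient, not by a time change but at the constant value $c^{-1}$, rewriting the equation as $\dot{\zeta}-c^{-1}\Delta \zeta = h+\xi$ with $h=(\nu-c^{-1})\Delta z+\nu b z\in L^2(Q)$ (using the already-established $\Delta z \in L^2(Q)$) and citing \cite[Lemma 7.12]{Tro10} adapted to Dirichlet conditions. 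What your route buys: all three assertions reduce to textbook constant-coefficient results, the dependence of the constants on $c$ alone is transparent, and continuity in (c) comes directly from the constant-coefficient equation without the paper's bootstrapping trick (it transfers back since $t\mapsto\sigma(t)$ is a homeomorphism). What it costs: you must justify the a.e.\ chain rule for the vector-valued Sobolev function $z$ under the bi-Lipschitz substitution and record the equivalence, with $c$-dependent constants, of the $W$-, $L^\infty$- and $\mathcal{Y}$-norms under composition --- bookkeeping the paper avoids entirely by citing \cite{LSU68}. One shared caveat: your De Giorgi iteration for the $\tilde{\xi}$-part of (b) produces an $L^\infty$ bound from an $L^2(Q)$ right-hand side only under the usual integrability-versus-dimension condition, but this is exactly the hypothesis implicit in \cite[Theorem 7.1]{LSU68} and \cite[Lemma 7.12]{Tro10} on which the paper itself relies, so your sketch is precisely the machinery behind the paper's citations and is no more restrictive than the published argument.
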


\begin{proof}
Assertion (a) is a standard result. For $z_0\in L^{\infty}(\Omega)$, the fact that~\eqref{eq35} has a weak variational solution in $W(0,2)\cap L^{\infty}(Q)$ and that~\eqref{eq37} holds follows from~\cite[Theorem 2.1, page 143]{LSU68} and~\cite[Theorem 7.1, page 181]{LSU68}.

Next we can take $\nu$, $b$ and $z\in L^2(\Omega)$ from the left to the right hand side in the first equation of~\eqref{eq35} and apply~\cite[Theorem 6.1, page 178]{LSU68} and~\cite[(6.10), page 176]{LSU68} in order to obtain~\eqref{eq36}, with  $C(\bar \Omega)$ and  $C(\bar Q)$ replaced by $L^\infty (\Omega)$ and $L^\infty( Q)$. To argue that $z \in C(\bar Q)$ we consider
\begin{eqnarray}\label{eq35a}
\begin{cases}
\begin{array}{rcll}
\dot{\zeta}(t,x) - c^{-1} \Delta \zeta  & =& h(t,x) + \xi (t)& \text{in $Q$}, \\
\zeta(t,x) & = & 0 & \text{in $\Sigma$}, \\
\zeta(0,x) & = & z_0 & \text{in $\Omega$},
\end{array}
\end{cases}
\end{eqnarray}
where
\begin{eqnarray*}
h(t,x)=(\nu(t)-c^{-1}) \Delta (t,x) z(t,x)+ \nu(t) b(t,x)z(t,x).
\end{eqnarray*}
Since $z \in L^2 \big((0,2),H^{2}(\Omega) \cap H_0^1(\Omega) \big) \cap W^{1,2} \big( (0,2),L^2(\Omega) \big)$ we have that $h\in L^2((0,2), L^2(\Omega))$, and, of course, that $\zeta=z$ in $L^\infty(Q)$.
The continuity of the solution to an equation with constant coefficients like~\eqref{eq35a} is proved in~\cite[Lemma 7.12]{Tro10}, in the case of a Neumann boundary condition. The proof can be adapted to Dirichlet boundary conditions by replacing all along the proof the space $C(\bar{\Omega})$ by the space $C_0(\bar{\Omega})= \{ y \in C(\bar{\Omega}) \,|\, y_{|\Sigma}= 0 \}$.
\end{proof}

\begin{lemma} \label{lemmaRegS}
The control-to-state mapping $(u,\tau) \in L^\infty(Q) \times (0,T) \mapsto S(u,\tau) \in \mathcal{Y}$ is twice Fr\'echet-differentiable.
\end{lemma}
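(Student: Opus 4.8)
The plan is to characterize $S(u,\tau)$ as the implicit solution of an operator equation and to apply the implicit function theorem in the $C^2$ category. I would introduce the map
\[
G \colon \mathcal{Y} \times L^\infty(Q) \times (0,T) \to L^2(Q) \times \bigl( H_0^1(\Omega) \cap C(\bar\Omega) \bigr),
\]
defined by
\[
G(y,u,\tau) = \Bigl( \dot{y} - \dot\pi(\cdot,\tau)\bigl[ \Delta y + F(\cdot,\pi(\cdot,\tau),y,u) \bigr],\ y(0) - y_0 \Bigr),
\]
so that, by the very definition of $S$ and the fact (recalled above) that $S(u,\tau) \in \mathcal{Y}$ is the strong variational solution of~\eqref{eqSystemReparam}, the identity $G(S(u,\tau),u,\tau)=0$ holds, and conversely every zero of $G$ solves the state equation. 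Since the solution is unique, $S$ is the implicit function defined by $G=0$, and it suffices to show that $G$ is $C^2$ and that $D_yG$ is a Banach-space isomorphism at each solution.

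Showing that $G$ is twice continuously Fr\'echet-differentiable is where the only real work and the main obstacle lie. The terms $\dot{y}$ and $y(0)-y_0$ are affine in $(y,\tau)$, and $\dot\pi(\cdot,\tau)\Delta y$ is jointly a polynomial of degree two in $(\tau,y)$ (as $\dot\pi$ is affine in $\tau$), hence these map smoothly from $\mathcal{Y} \times (0,T)$ to $L^2(Q)$. The delicate term is the superposition operator $(y,u,\tau)\mapsto \dot\pi(\cdot,\tau)\,F(\cdot,\pi(\cdot,\tau),y,u)$. The difficulty is the usual one for Nemytskii operators: they are not differentiable between $L^2$ spaces. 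This is overcome by exploiting that $\mathcal{Y} \hookrightarrow C(\bar Q) \hookrightarrow L^\infty(Q)$, so that $y$ enters through its $L^\infty$-norm. Under Assumption~\ref{hypRegularity}, $f_1$ and $f_2$ are bounded and Lipschitz of order $2$ in $(t,y,u)$ in the sense of Definition~\ref{defNemytskii}, which guarantees (Appendix~\ref{appendix}) that the associated superposition operator is $C^2$ from $L^\infty$ into $L^\infty$, and a fortiori into $L^2(Q)$; here the selector slot $s=t$ is frozen, while the physical-time slot is filled by $\pi(\cdot,\tau)$, which is affine in $\tau$ and depends on $t$ only through an $L^\infty$ function. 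Composing this $C^2$ superposition operator with the smooth map $\tau\mapsto(\pi(\cdot,\tau),\dot\pi(\cdot,\tau))$, and using that $f_k$ is $C^2$ in its time argument, yields the $C^2$-regularity of this term, and hence of $G$.

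It then remains to verify that $D_yG(\bar y,\bar u,\bar\tau)$ is an isomorphism from $\mathcal{Y}$ onto $L^2(Q)\times(H_0^1(\Omega)\cap C(\bar\Omega))$. Differentiating, this operator sends $z \in \mathcal{Y}$ to $\bigl(\dot{z} - \dot\pi(\cdot,\bar\tau)[\Delta z + D_yF[\cdot]\,z],\ z(0)\bigr)$. Setting $\nu = \dot\pi(\cdot,\bar\tau)$ and $b = D_yF[\cdot]$, one has $\nu \in \{\bar\tau,\,T-\bar\tau\}$, so $1/c \le \nu \le c$ for a suitable $c>0$ because $\bar\tau \in (0,T)$, and $b \in L^\infty(Q)$ by boundedness of the Nemytskii derivative. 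Lemma~\ref{lemmaRegularityLinParabolic}(c) then provides, for every $\xi \in L^2(Q)$ and every $z_0 \in H_0^1(\Omega)\cap C(\bar\Omega)$, a unique $z \in \mathcal{Y}$ together with the a priori bound~\eqref{eq36}; this is precisely the statement that $D_yG(\bar y,\bar u,\bar\tau)$ is bijective with bounded inverse. The implicit function theorem now applies and shows that $(u,\tau)\mapsto S(u,\tau)$ is of class $C^2$, in particular twice Fr\'echet-differentiable; uniqueness of the state guarantees that the locally defined implicit function coincides with $S$ globally.
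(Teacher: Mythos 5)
Your proposal is correct and takes essentially the same approach as the paper: both characterize $S$ as the implicit function defined by the operator equation $e(y,u,\tau)=\bigl(\dot y-\dot\pi(\cdot,\tau)\bigl[\Delta y+F(\cdot,\pi(\cdot,\tau),y,u)\bigr],\,y(0)-y_0\bigr)=0$, prove $C^2$-regularity of this operator by combining its linear and bilinear pieces with the Nemytskii lemma of the appendix (using $\mathcal{Y}\hookrightarrow L^\infty(Q)$), and invoke Lemma~\ref{lemmaRegularityLinParabolic} for the solvability of the linearized equation before applying the implicit function theorem. Your explicit observation that unique solvability together with the a priori bound makes $D_y e$ an isomorphism with bounded inverse (where the paper only states surjectivity, injectivity being implicit in uniqueness) is a minor sharpening, not a different argument.
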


\begin{proof}

\emph{Step 1: General approach}.
%We set:
%\begin{eqnarray*}
%\tilde{\mathcal{Y}}= \{ y \in \mathcal{Y} \,|\, y(0)= y_0 \}, \quad
%\tilde{\mathcal{Y}}_0= \{ y \in \mathcal{Y} \,|\, y(0)= 0 \}.
%\end{eqnarray*}
%By the embedding $\mathcal{Y} \subset W(0,2) \hookrightarrow C([0,2],L^2(\Omega))$, the trace $y \in \mathcal{Y} \mapsto y(0) \in L^2(\Omega)$ is well-defined and continuous. Therefore, $\tilde{\mathcal{Y}}$ and $\tilde{\mathcal{Y}}_0$ are well-defined and closed subspaces of $\mathcal{Y}$.

Consider the mapping $e: {\mathcal{Y}} \times L^\infty(Q) \times (0,T) \rightarrow L^2(Q) \times (H^1_0(\Omega)\cap C(\bar \Omega))$, defined by:
\begin{eqnarray*}
e(y,u,\tau) & = & \big (
\dot{y}(\cdot) - \dot{\pi}(\cdot,\tau)\big[ \Delta y(\cdot) + F \big(\cdot, \pi(\cdot,\tau),y(\cdot),u(\cdot) \big) \big] , y(0)-y_0 \big).
\end{eqnarray*}
For any $(y,u,\tau) \in {\mathcal{Y}} \times \mathcal{U}_{\text{ad}} \times [0,T]$, the equality $y= S(u,\tau)$ holds if and only if $e(y,u,\tau)=0$. In the next two steps, we prove that $e$ is $C^2$ and that for any $(y,u,\tau) \in \mathcal{Y} \times L^\infty(Q) \times (0,T)$, $D_y e(y,u,\tau)$ is surjective. Then, it follows with the implicit function theorem that $S$ is $C^2$.

\emph{Step 2: Regularity of $e$}.
Observe first that the mappings
\begin{eqnarray*}
G_1 \colon y \in {\mathcal{Y}} \mapsto \dot{y} \in L^2(Q) & \text{and} &
G_2 \colon y \in {\mathcal{Y}} \mapsto \Delta y \in L^2(Q)
\end{eqnarray*}
are well-defined, linear and continuous, thus $C^2$.
It is easy to check that the mappings
\begin{eqnarray*}
G_3 \colon \tau \in [0,T] \mapsto \pi(\cdot,\tau) \in L^\infty(0,2) & \text{and} &
G_4 \colon \tau \in [0,T] \mapsto \dot{\pi}(\cdot,\tau) \in L^\infty(0,2)
\end{eqnarray*}
 are $C^2$. The mapping
\begin{eqnarray*}
G_5 \colon (h_1,h_2) \in L^\infty(Q) \times L^2(Q) \mapsto h_1(\cdot,\cdot) h_2(\cdot,\cdot) \in L^2(Q)
\end{eqnarray*}
is well-defined, continuous and bilinear (by Cauchy-Schwarz inequality), therefore $C^2$. Define:
\begin{eqnarray*}
f \colon (s,t,x,y,u) \in [0,2] \times [0,T] \times \Omega \times \R^2 \mapsto
\begin{cases}
\begin{array}{ll}
f_1(t,x,y,u) & \text{ if $s \in (0,1)$}, \\
f_2(t,x,y,u) & \text{ if $s \in (1,2)$}.
\end{array}
\end{cases}
\end{eqnarray*}
The mapping $f$ is measurable and satisfies the boundedness condition with respect to $(s,x)$ and the local Lipschitz condition of order 2 with respect to $(t,y,u)$ (see Definition~\ref{defNemytskii}). Therefore, by Lemma~\ref{lemmaNemytskii} of the Appendix, the following mapping is $C^2$:
\begin{eqnarray*}
G_6:(\rho,y,u) \in L^\infty(0,2) \times L^\infty(Q)^2
\mapsto F \big( \cdot, \rho(\cdot),y(\cdot),u(\cdot) \big) \in L^\infty(Q).
\end{eqnarray*}
Its restriction to $L^\infty(0,2) \times {\mathcal{Y}} \times L^\infty(Q)$ is also $C^2$, and its image is embedded into $L^2(Q)$.
We deduce that $e$ is $C^2$, since:
\begin{eqnarray} \label{eqRepresentationE}
e(y,u,\tau)= \big (G_1(y)- G_5 \big( G_4(\tau), G_2(y) + G_6(G_3(\tau),y,u)  \big), y(0)-y_0 \big).
\end{eqnarray}

\emph{Step 3: Surjectivity of $D_y e(y,u,\tau)$}.
Let $(y,u,\tau) \in \mathcal{Y} \times L^\infty(Q) \times (0,T)$ and let $\xi \in L^2(Q)$. By Lemma~\ref{lemmaRegularityLinParabolic}, the following linear parabolic equation has a unique solution in $\mathcal{Y}$:
\begin{eqnarray} \label{eqLinearizedSystem}
\begin{cases}
\begin{array}{rcll}
\dot{z}(t)-\dot{\pi}(t,\tau) \big[ \Delta z(t) + D_y F \big(t, \pi(t,\tau), y(t), u(t) \big) z(t) \big]&=& \xi(t)  & \text{ in $Q$}, \\
z(t)&=& 0 & \text{ on $\Sigma$}, \\
z(0)&=& z_0 & \text{ in $\Omega$}.
\end{array}
\end{cases}
\end{eqnarray}
This proves the surjectivity of $D_y e(y,u,\tau): {\mathcal{Y}}_0 \to  H^1_0(\Omega)\cap C(\bar \Omega)$ and concludes the proof.
\end{proof}

\begin{remark}
The implicit function theorem may appear as a too strong tool for verifying the differentiability of $S$, since it involves the use of strong variational solutions. Their use, however, is unavoidable for our analysis. Indeed, consider the linearization of system~\eqref{eqSystemReparam} with respect to $\tau$:
\begin{eqnarray*}
\begin{cases}
\begin{array}{rcll}
\dot{z}(t) - \dot{\pi}(t,\tau) \big[ \Delta z(t) + D_y F \big(t, \pi(t,\tau),y(t), u(t) \big) z(t) \big]& = &
\xi(t) & \text{ in $Q$}, \\
z(t) & = & 0 & \text{ on $\Sigma$}, \\
z(0)& = & z_0 & \text{ in $\Omega$},
\end{array}
\end{cases}
\end{eqnarray*}
where:
\begin{eqnarray*}
\xi(t)= \pi_\tau(t) \big[ \Delta y(t) + F \big( t,\pi(t,\tau),y(t),u(t) \big) \big]
+ \pi(t,\tau) D_tF \big( t,\pi(t,\tau),y(t),u(t) \big).
\end{eqnarray*}
We shall need an  $L^\infty(Q)$ bound for $z$. This can only be achieved if the right-hand side $\xi$ is an element of $L^2(Q)$, which explains why $\Delta y$ should itself be in $L^2(Q)$.
\end{remark}

Consider now the linear mapping defined by
\begin{eqnarray} \label{eqDefOperatorK}
\mathcal{K}: \xi \in L^2(Q) & \mapsto & (z(1), z(2), z) \in  C(\bar \Omega)^2 \times \mathcal{Y} \subset L^2(\Omega)^2 \times L^2(Q),
\end{eqnarray}
where $z$ is the solution to the linearized equation~\eqref{eqLinearizedSystem}, with $(y,u,\tau)= (\bar{y},\bar{u},\bar{\tau})$ and $z_0=0$. The well-posedness and the continuity of the linear mapping $\mathcal{K}$ is ensured by Lemma~\ref{lemmaRegularityLinParabolic}.

We introduce the following space:
\begin{eqnarray*}
W\big( (0,1),(1,2) \big) & = & \big\{ y \in L^2((0,2),H_0^1(\Omega)) \,|\, y_{|(0,1) \times \Omega} \in W(0,1), y_{|(1,2) \times \Omega} \in W(1,2) \big\}.
\end{eqnarray*}
Any function $y \in W\big( (0,1),(1,2) \big)$ is continuous on $[0,1)$ and $(1,2]$ and has a left- and a right-hand limit at time 1, that we denote by $y(1^-)$ and $y(1^+)$, respectively. The jump is denoted by $[y](1)=y(1^+)-y(1^-)$.

We define now the mapping $\mathcal{K}^*:(a,b,\xi) \in L^2(\Omega)^2 \times L^2(Q) \mapsto q \in W\big( (0,1),(1,2) \big)\subset L^2(Q)$, where $q$ is the solution to:
\begin{eqnarray*}
\begin{cases}
\begin{array}{rcll}
-\dot{q}(t)-\dot{\pi}(t,\bar{\tau}) \big[ \Delta q(t) + D_y F_2 \big( \pi(t,\bar{\tau}), \bar{y}(t), \bar{u}(t) \big) q(t) \big]&=&
\xi(t)  & \text{ in $(1,2) \times \Omega$}, \\
q(t)&=& 0 & \text{ on $(1,2) \times \partial \Omega$}, \\
q(2)&=& b & \text{ in $\Omega$}, \\
-\dot{q}(t)-\dot{\pi}(t,\bar{\tau}) \big[ \Delta q(t) + D_y F_1 \big( \pi(t,\bar{\tau}), \bar{y}(t), \bar{u}(t) \big) q(t) \big]&=&
\xi(t)  & \text{ in $(0,1) \times \Omega$}, \\
q(t)&=& 0 & \text{ on $(0,1) \times \partial \Omega$}, \\
q(1^-)&= & q(1^+)+a & \text{ in $\Omega$}. \\
\end{array}
\end{cases}
\end{eqnarray*}
The function $q$ is the concatenation of the solution to two backward linear parabolic equations: One on $(1,2) \times \Omega$, with terminal condition $b$, and one on $(0,1) \times \Omega$, with terminal condition $q(1^+)+a$. The well-posedness is ensured by Lemma~\ref{lemmaRegularityLinParabolic} (a) (after the change of variable $t \mapsto 2-t$). Note here that $q_{|(1,2) \times \Omega} \in W(1,2)$ and hence $q(1^+) \in L^2(\Omega)$. We establish below that $\mathcal{K}^*$ is the adjoint operator of $\mathcal{K} \in \mathcal{L}(L^2(Q),L^2(\Omega)^2 \times L^2(Q))$.

\begin{lemma} \label{lemmaAdjoint}
For all $\xi \in L^2(Q)$, for all $(a,b,w) \in L^2(\Omega)^2 \times L^2(Q)$, it holds:
\begin{eqnarray*}
\langle (a,b,w), \mathcal{K}(\xi) \rangle_{L^2(\Omega)^2 \times L^2(Q)}
= \langle \mathcal{K}^*(a,b,w), \xi \rangle_{L^2(Q)}.
\end{eqnarray*}
\end{lemma}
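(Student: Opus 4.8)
The plan is to prove the adjoint identity by the standard forward--backward duality computation: test the linearized state equation against $q$ and the adjoint equation against $z$, then subtract, exploiting that for each fixed $t$ the spatial operator $-\dot{\pi}(t,\bar{\tau})[\Delta + D_yF[t]\,\cdot\,]$ is self-adjoint on $L^2(\Omega)$ with Dirichlet conditions. Write $z=\mathcal{K}(\xi)\in\mathcal{Y}$ for the solution of \eqref{eqLinearizedSystem} with $(y,u,\tau)=(\bar{y},\bar{u},\bar{\tau})$ and $z(0)=0$, and $q=\mathcal{K}^*(a,b,w)\in W\big((0,1),(1,2)\big)$ for the backward solution (the third datum renamed $w$ to match the statement of the lemma). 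Both equations are read in the weak $H^{-1}(\Omega)$--$H^1_0(\Omega)$ sense. Introducing the symmetric Dirichlet form $\mathfrak{a}(\varphi,\psi)=\int_\Omega \nabla\varphi\cdot\nabla\psi\dd x$, the forward equation reads $\langle\dot{z},\varphi\rangle+\dot{\pi}(t,\bar{\tau})\big[\mathfrak{a}(z,\varphi)-\langle D_yF[t]\,z,\varphi\rangle\big]=\langle\xi,\varphi\rangle$ for all $\varphi\in H^1_0(\Omega)$, while on each subinterval $(0,1)$ and $(1,2)$ the backward equation reads $-\langle\dot{q},\varphi\rangle+\dot{\pi}(t,\bar{\tau})\big[\mathfrak{a}(q,\varphi)-\langle D_yF[t]\,q,\varphi\rangle\big]=\langle w,\varphi\rangle$.

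The heart of the argument is to choose $\varphi=q(t)$ in the forward formulation and $\varphi=z(t)$ in the backward formulation and subtract. Since $\mathfrak{a}$ is symmetric and $D_yF[t]$ acts by multiplication by a real function, hence is self-adjoint on $L^2(\Omega)$, the two bracketed terms cancel identically; this cancellation is precisely why the same operator governs $\mathcal{K}$ and $\mathcal{K}^*$. What remains is the pointwise identity $\langle\dot{z}(t),q(t)\rangle+\langle\dot{q}(t),z(t)\rangle=\langle\xi(t),q(t)\rangle-\langle w(t),z(t)\rangle$ for a.e. $t$, whose left-hand side equals $\tfrac{\mathrm{d}}{\mathrm{d}t}\langle z(t),q(t)\rangle_{L^2(\Omega)}$ by the integration-by-parts formula valid for pairs of functions in $W(t_1,t_2)$. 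Here I use the continuous embedding $W(t_1,t_2)\hookrightarrow C([t_1,t_2],L^2(\Omega))$ recalled above, which simultaneously guarantees that all the time-traces appearing below are meaningful.

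I then integrate this identity separately over $(0,1)$ and $(1,2)$, the splitting being forced by the jump of $q$ at $t=1$, whereas $z\in\mathcal{Y}$ is continuous there. On $(0,1)$ the boundary terms produce $\langle z(1),q(1^-)\rangle$, the contribution at $t=0$ vanishing because $z(0)=0$; on $(1,2)$ they produce $\langle z(2),q(2)\rangle-\langle z(1),q(1^+)\rangle=\langle z(2),b\rangle-\langle z(1),q(1^+)\rangle$ by the terminal condition $q(2)=b$. Adding the two and inserting the jump condition $q(1^-)-q(1^+)=a$ collapses the two traces at $t=1$ into $\langle z(1),a\rangle$, so the sum of the left-hand sides is $\langle a,z(1)\rangle+\langle b,z(2)\rangle$, while the sum of the right-hand sides is $\langle\xi,q\rangle_{L^2(Q)}-\langle w,z\rangle_{L^2(Q)}$. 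Rearranging gives exactly $\langle a,z(1)\rangle+\langle b,z(2)\rangle+\langle w,z\rangle_{L^2(Q)}=\langle\mathcal{K}^*(a,b,w),\xi\rangle_{L^2(Q)}$, which, together with the definition of the inner product on $L^2(\Omega)^2\times L^2(Q)$, is the claimed identity.

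I expect the only genuinely delicate point to be the rigorous justification of the integration-by-parts-in-time formula and the associated time-traces: $\dot{q}$ lives only in $L^2\big((t_1,t_2),H^{-1}(\Omega)\big)$, so the pairing $\langle\dot{z},q\rangle$ must be interpreted through the $H^{-1}$--$H^1_0$ duality, and the jump of $q$ at $t=1$ obliges one to carry out the formula on each subinterval while tracking $q(1^-)$ and $q(1^+)$ separately. Everything else --- the cancellation of the spatial forms and the accounting of the boundary terms --- is routine once the weak formulations have been written down.
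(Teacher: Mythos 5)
Your proof is correct and follows essentially the same route as the paper's: both are the standard forward--backward duality computation, integrating by parts in time separately on $(0,1)$ and $(1,2)$ and using $z(0)=0$, the terminal condition $q(2)=b$, the jump $q(1^-)-q(1^+)=a$, and the symmetry of the spatial operator to collect exactly the right boundary terms. Your reorganization via the identity $\tfrac{\mathrm{d}}{\mathrm{d}t}\langle z(t),q(t)\rangle_{L^2(\Omega)}=\langle\xi(t),q(t)\rangle-\langle w(t),z(t)\rangle$ is only a presentational variant of the paper's direct substitution of the adjoint equation followed by integration by parts, and you correctly flag the $H^{-1}$--$H^1_0$ interpretation of the time-derivative pairings as the one delicate point.
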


\begin{proof}
Set $z= \mathcal{K}(\xi) \in \mathcal{Y}$ and $q = \mathcal{K}^*(a,b,w)$. Let $q_1= q_{|(0,1) \times \Omega} \in W(0,1)$, let $q_{|(1,2) \times \Omega} \in W(1,2)$.
We obtain, by integration by parts:\\
\vspace*{-10pt}
\begin{flushleft} 
$\langle (a,b,w), \mathcal{K}(\xi) \rangle_{L^2(\Omega)^2 \times L^2(Q)}$
\end{flushleft}
\begin{eqnarray*}
& = &
\langle w, z \rangle_{L^2(Q)} + \langle a, z(1) \rangle_{L^2(\Omega)} + \langle b, z(2) \rangle_{L^2(\Omega)} \\
& = & \int_0^1 \langle -\dot{q}_1(t) - \dot{\pi}(t,\tau)\big[ \Delta q_1(t) + D_yF_1[t] q_1(t) \big],z(t) \big] \rangle_{H^{-1}(\Omega),H_0^1(\Omega)} \dd t \\
& &  + \int_1^2 \langle -\dot{q}_2(t) - \dot{\pi}(t,\tau)\big[ \Delta q_2(t) + D_yF_2[t] q_2(t) \big],z(t) \big] \rangle_{H^{-1}(\Omega),H_0^1(\Omega)} \dd t \\
& &  + \langle a, z(1) \rangle_{L^2(\Omega)} + \langle b, z(2) \rangle_{L^2(\Omega)} \\
& = & \int_0^1 \langle q(t), \dot{z}(t) - \dot{\pi}(t,\tau) \big[ \Delta z(t) + D_yF_1[t] z(t) \big] \rangle_{L^2(\Omega)} \dd t - \langle q(1^-), z(1) \rangle_{L^2(\Omega)} \\
& &  + \int_1^2 \langle q(t), \dot{z}(t) - \dot{\pi}(t,\tau) \big[ \Delta z(t) + D_yF_2[t] z(t) \big] \rangle_{L^2(\Omega)} \dd t \\
& & + \langle q(1^+), z(1) \rangle_{L^2(\Omega)} - \langle q(2), z(2) \rangle_{L^2(\Omega)} 
 + \langle a, z(1) \rangle_{L^2(\Omega)} + \langle b, z(2) \rangle_{L^2(\Omega)} \\
& = & \int_0^2 \langle q(t),\xi(t) \rangle_{L^2(\Omega)} \dd t.
\end{eqnarray*}
The proof is complete.
\end{proof}

We define now the following mapping:
\begin{eqnarray*}
\mathbf{S}: (u,\tau) \in L^\infty(Q) \times (0,T) \mapsto \big( S(u,\tau)(1), S(u,\tau)(2), S(u,\tau) \big) \in C(\bar\Omega)^2 \times \mathcal{Y}.
\end{eqnarray*}
As a consequence of Lemma~\ref{lemmaRegS}, $\textbf{S}$ is twice continuously Fr\'echet differentiable, with:
\begin{eqnarray*}
D_u \mathbf{S}(\bar{u},\bar{\tau}) v & = & \mathcal{K}\big( \dot{\pi}(\cdot,\bar{\tau}) D_u F[\cdot] v(\cdot) \big), \text{ for $v \in L^\infty(Q)$,} \\
D_\tau \mathbf{S}(\bar{u},\bar{\tau})& = & \mathcal{K} \big(\dot{\pi}_\tau(\cdot) (\Delta \bar{y}(\cdot) + F[\cdot]) + \dot{\pi}(\cdot,\bar{\tau})D_t F[\cdot]\pi_\tau(\cdot) \big).
\end{eqnarray*}
Lemma~\ref{lemmaAdjoint} allows us to compute the adjoint operators of $D_u\mathbf{S}(\bar{u},\bar{\tau})$ and $D_\tau \mathbf{S}(\bar{u},\bar{\tau})$ as mappings from $L^2(Q)$ and $\R$ respectively, to $L^2(\Omega)^2\times L^2(Q)$.
For $v \in L^2(Q)$ and $(a,b,w) \in L^2(\Omega)^2 \times L^2(Q)$, it holds:
\begin{eqnarray*}
 \langle (a,b,w), D_u \mathbf{S}(\bar{u},\bar{\tau}) v \rangle_{L^2(\Omega)^2 \times L^2(Q)}
& = &  \big\langle \dot{\pi}(\cdot,\bar{\tau}) D_u F[\cdot] \mathcal{K}^*(a,b,w), v(\cdot) \big\rangle_{L^2(Q)}, \\
 \langle (a,b,w), D_\tau \mathbf{S}(\bar{u},\bar{\tau}) \rangle_{L^2(\Omega)^2 \times L^2(Q)}
& = & \int_{Q} \big( \dot{\pi}_\tau(\cdot) F[\cdot] + \dot{\pi}(\cdot,\bar{\tau}) D_tF[\cdot] \pi_\tau(\cdot) \big) \mathcal{K}^*(a,b,w) \dd x \dd t.
\end{eqnarray*}

\subsection{Computation of the derivatives of the cost functional}

First we define the mappings $H_1$ and $H_2$ as follows
\begin{eqnarray*}
H_i: (t,y,u,p) \in (0,T) \times C(\bar{\Omega}) \times L^\infty(\Omega) \times L^2(\Omega) \mapsto
\int_{\Omega} L_i(t,y,u) \dd x + \langle p, F_i(t,y,u) \rangle_{L^2(\Omega)},
\end{eqnarray*}
for $i \in \{ 1, 2\}$. We define the Hamiltonian $H$ in the same fashion as in~\eqref{eqConvention}:
\begin{equation} \label{eqDefHam}
 \begin{array} {lrcl}
H: &  (0,2) \times [0,T] \times C(\bar{\Omega}) \times L^\infty(\Omega) \times L^2(\Omega) 
& \rightarrow & \R \\
 & (s,t,y,u,p)
&\mapsto &
\begin{cases}
\begin{array}{ll}
H_1(t,y(t),u(t),p(t)) & \text{ if $s \in (0,1)$}, \\
H_2(t,y(t),u(t),p(t)) & \text{ if $s \in (1,2)$}.
\end{array}
\end{cases}
\end{array} 
\end{equation}
Note that in what follows, the partial derivative $D_tH$ (as well as the other second-order partial derivatives in $t$) refers to the partial derivative with respect to the second variable in the definition of $H$.

We define the costate $\bar{p} \in W\big((0,1),(1,2)\big)$ as:
\begin{eqnarray} \label{eqAdjoint}
\bar{p} & = & \mathcal{K}^* \big( D_y \Phi_1(\bar{\tau},\bar{y}(1)),\, D_y \Phi_2(\bar{y}(2)),\, \dot{\pi}(\cdot, \bar{\tau}) D_y L(\cdot, \pi(\cdot,\bar{\tau}),\bar{y}(\cdot), \bar{u}(\cdot) )\big).
\end{eqnarray}
Observe that $\bar{p}$ is the solution to the following equation:
\begin{eqnarray} \label{eqCostate}
\begin{cases}
\begin{array}{rcll}
-\dot{\bar{p}}(t)-\dot{\pi}(t,\bar{\tau}) \big[ \Delta \bar{p}(t) + D_y H \big(t,\pi(t,\bar{\tau}),\bar{y}(t),\bar{u}(t),\bar{p}(t) \big) \big]&=&
0  & \text{ in $Q$}, \\
\bar{p}&=& 0 & \text{ on $\Sigma$}, \\
\bar{p}(2)&=& D_y \Phi_2(\bar{y}(2)) & \text{ in $\Omega$}, \\
-[\bar{p}](1)&= & D_y \Phi_1(\bar{\tau},\bar{y}(1)) & \text{ in $\Omega$}. \\
\end{array}
\end{cases}
\end{eqnarray}
From now, we use the following notation:
\begin{eqnarray*}
H[t]& = &H \big(t,\pi(t,\bar{\tau}),\bar{y}(t),\bar{u}(t),\bar{p}(t) \big).
\end{eqnarray*}
We define the following Lagrangian functional by:
\begin{eqnarray*}
\begin{array} {rrcl}
\mathcal{L} :  & (C(\bar{\Omega})^2 \times \mathcal{Y}) \times L^\infty(Q) \times (0,T) & \rightarrow & \R \\
& \big( \mathbf{y}= (a_1,a_2,y), u, \tau \big) & \mapsto & \displaystyle
\int_{\Omega} \Phi_1(\tau,a_1) \dd x + \int_{\Omega} \Phi_2(a_2) \dd x \\
& & & \displaystyle + \int_0^2 \dot{\pi}(t,\tau) H \big( t,\pi(t,\tau),y(t),u(t),\bar{p}(t) \big) \dd t \\
& & & \displaystyle + \phantom{\Big|}\langle \bar{p}(\cdot), \Delta y(\cdot) -\dot{y}(\cdot) \rangle_{L^2(Q)} 
 - \langle \bar{p}(0), y(0) - y_0 \rangle_{L^2(\Omega)} \\
& & & \displaystyle+ \phantom{\Big|} \langle \bar{p}(2), y(2)-a_2 \rangle_{L^2(\Omega)}
- \langle [\bar{p}](1), y(1)-a_1 \rangle_{L^2(\Omega)}.
\end{array}
\end{eqnarray*}
In the above definition, $\bar{p} \in W\big((0,1),(1,2)\big)$ is the costate associated with $(\bar{y},\bar{u},\bar{\tau})$. The use of an appropriately defined Lagrange functional is important for obtaining convenient expressions for the derivatives of the reduced cost functional $J(u,\tau)$. See Proposition~\ref{le:derredcost} below. We now denote
\begin{eqnarray*}
\bar{\mathbf{y}}= (\bar{y}(1),\bar{y}(2),\bar{y})= \mathbf{S}(\bar{u}, \bar{\tau}).
\end{eqnarray*}

\begin{lemma} \label{lemmaTwiceDiffLag}
The Lagrangian is twice continuously differentiable. The first-order partial derivatives at $(\bar{\mathbf{y}},\bar{u},\bar{\tau})$ are given by:
\begin{eqnarray*}
 D_{\mathbf{y}} \mathcal{L}(\bar{\mathbf{y}},\bar{u},\bar{\tau}) \mathbf{z} & = & 0, 
 \text{ for } \mathbf{z} \in C(\bar{\Omega})^2\mathcal{Y},\\
 D_u \mathcal{L}(\bar{\mathbf{y}},\bar{u},\bar{\tau}) v
& = & \int_0^2 \dot{\pi}(t,\bar{\tau}) D_uH[t] v(t) \dd t, \text{ for } v \in L^{\infty}(Q), \\
D_{\tau} \mathcal{L}(\bar{\mathbf{y}},\bar{u},\bar{\tau}) &= &
 \int_0^2 \big( \dot{\pi}_\tau(t) H[t] + \dot{\pi}(t,\bar{\tau}) D_tH[t] \pi_{\tau}(t) \big) \dd t + \int_{\Omega} D_\tau \Phi_1(\bar{\tau},\bar{y}(1)) \dd x.
\end{eqnarray*}
The second-order partial derivatives at $(\bar{\mathbf{y}},\bar{u},\bar{\tau})$ in directions $z,\hat z$ in ${\mathcal{Y}}$, $v,\hat v$ in $L^\infty(\Omega)$, $\delta a_i,  \delta \hat a_i$ for $i\in\{1,2\}$ in $C(\bar{\Omega})$ and $\theta \in \R$ are given by:
\begin{eqnarray*}
 D_{a_1,a_1}^2 \mathcal{L}(\bar{\mathbf{y}},\bar{u},\bar{\tau})(\delta a_1, \delta \hat{a}_1)
& = & \int_{\Omega} D_{yy}^2 \Phi_1(\bar{\tau},\bar{y}(1))(\delta a_1, \delta \hat{a}_1) \dd x, \\
 D_{a_2,a_2}^2 \mathcal{L}(\bar{\mathbf{y}},\bar{u},\bar{\tau})(\delta a_2, \delta \hat{a}_2)
& = & \int_{\Omega} D_{yy}^2 \Phi_2(\bar{y}(2))(\delta a_2, \delta \hat{a}_2) \dd x, \\
 D_{\tau,a_1}^2 \mathcal{L}(\bar{\mathbf{y}},\bar{u},\bar{\tau})(\theta,\delta a_1)
& = & \int_{\Omega} D_{\tau,y}^2 \Phi_1(\bar{\tau},\bar{y}(1))(\theta,\delta a_1) \dd x, \\
D_{(y,u)^2}^2 \mathcal{L}(\bar{\mathbf{y}},\bar{u},\bar{\tau})
\scriptscriptstyle
\begin{pmatrix}
\begin{pmatrix} z \\ v \end{pmatrix},
\begin{pmatrix} \hat{z} \\ \hat{v} \end{pmatrix}
\end{pmatrix}
\displaystyle
& = & \int_0^2 \dot{\pi}(t,\bar{\tau}) D_{(y,u)^2}^2 H[t]
\scriptscriptstyle
\begin{pmatrix}
\begin{pmatrix} z(t) \\ v(t) \end{pmatrix},
\begin{pmatrix} \hat{z}(t) \\ \hat{v}(t) \end{pmatrix}
\end{pmatrix}
\displaystyle
\dd t, \\
D_{\tau,(y,u)}^2 \mathcal{L} (\bar{\mathbf{y}},\bar{u},\bar{\tau})
\scriptscriptstyle
\begin{pmatrix}
\theta,
\begin{pmatrix}
z \\ v
\end{pmatrix}
\end{pmatrix}
\displaystyle
& = & \int_0^2 \dot{\pi}_\tau(t) \theta D_{(y,u)} H[t]
\begin{pmatrix}
z(t) \\ v(t)
\end{pmatrix}
\dd t
\displaystyle \\
& & + \int_0^2
\dot{\pi}(t,\bar{\tau}) D^2_{\tau,(y,u)} H[t] \pi_\tau(t)
\scriptscriptstyle
\begin{pmatrix} \theta,
\begin{pmatrix}
z(t) \\ v(t)
\end{pmatrix}
\end{pmatrix}
\displaystyle
\dd t, \\
 D_{\tau \tau}^2 \mathcal{L}(\bar{\mathbf{y}},\bar{u},\bar{\tau})
 & = & \int_0^2 \left(2 \dot{\pi}_\tau(t) D_\tau H[t] \pi_\tau(t) + \dot{\pi}(t,\bar{\tau}) D_{tt}^2 H[t] \pi_\tau(t)^2 \right) \dd t.
\end{eqnarray*}
The derivatives which are not given above are null.
\end{lemma}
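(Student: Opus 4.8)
The plan is to exploit the additive structure of $\mathcal{L}$ and differentiate each summand separately, regularity and the derivative formulas then following from the chain and product rules. I would first split $\mathcal{L}$ into three groups: (i) the terminal/switching terms $\int_\Omega \Phi_1(\tau,a_1)\dd x$ and $\int_\Omega \Phi_2(a_2)\dd x$; (ii) the running term $I(y,u,\tau)=\int_0^2 \dot{\pi}(t,\tau)H(t,\pi(t,\tau),y(t),u(t),\bar{p}(t))\dd t$; and (iii) the coupling terms pairing the (fixed) costate $\bar{p}$ with $\mathbf{y}$, which are jointly affine in $\mathbf{y}$ and independent of $(u,\tau)$.

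For the $C^2$ regularity I would argue exactly as in the proof of Lemma~\ref{lemmaRegS}. Group (iii) is a continuous affine function of $\mathbf{y}$: the pairings $\langle \bar{p},\Delta y\rangle_{L^2(Q)}$ and $\langle \bar{p},\dot{y}\rangle_{L^2(Q)}$ are continuous since $\Delta y$ and $\dot{y}$ are controlled by $\|y\|_{\mathcal{Y}}$, while the trace pairings at $t=0,1,2$ are continuous because $\mathcal{Y}\hookrightarrow C([0,2],L^2(\Omega))$; hence group (iii) is $C^2$ with vanishing second derivative. For group (i), the maps $a_i\mapsto \Phi_i(\cdot,a_i)$ are $C^2$ Nemytskii operators by Assumption~\ref{hypRegularity} and Lemma~\ref{lemmaNemytskii}, composed with the continuous evaluations; the extra dependence of $\Phi_1$ on $\tau$ is handled by the same Nemytskii lemma in the $(\tau,y)$ variables. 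For group (ii), I would reuse the building blocks $G_3,G_4$ (the $C^2$ maps $\tau\mapsto \pi(\cdot,\tau),\dot{\pi}(\cdot,\tau)\in L^\infty(0,2)$) and the $C^2$ Nemytskii operators for $L$ and $F$, hence for $H$, together with the continuous bilinear multiplication and integration over $Q$; composing these shows $I$ is $C^2$.

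Next I would compute the first derivatives group by group. Only (ii) depends on $u$, giving $D_u\mathcal{L}\,v=\int_0^2 \dot{\pi}(t,\bar{\tau})D_uH[t]v(t)\dd t$. Differentiating (ii) in $\tau$ needs the product rule on $\dot{\pi}(t,\tau)\cdot H$ and the chain rule through the second slot $\pi(t,\tau)$, producing the factors $\dot{\pi}_\tau$ and $\pi_\tau$ and the term $\int_0^2(\dot{\pi}_\tau H[t]+\dot{\pi}D_tH[t]\pi_\tau)\dd t$; adding the $\tau$-derivative of (i) gives the remaining $\int_\Omega D_\tau\Phi_1\dd x$. The genuinely substantive identity is $D_{\mathbf{y}}\mathcal{L}(\bar{\mathbf{y}},\bar{u},\bar{\tau})=0$. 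Here I would split $\mathbf{z}=(\delta a_1,\delta a_2,z)$: the $\delta a_1$-terms ($\int_\Omega D_y\Phi_1\,\delta a_1$ against the jump pairing) and the $\delta a_2$-terms ($\int_\Omega D_y\Phi_2\,\delta a_2$ against the terminal pairing) cancel by the conditions $-[\bar{p}](1)=D_y\Phi_1$ and $\bar{p}(2)=D_y\Phi_2$ from~\eqref{eqCostate}; for the $z$-terms I would integrate by parts in time on $(0,1)$ and $(1,2)$ — the computation of Lemma~\ref{lemmaAdjoint}, but retaining the $z(0)$ contribution — and in space by Green's formula (using $z,\bar{p}\in H^1_0(\Omega)$), so that the surviving integrand is a multiple of the interior costate equation and vanishes.

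Finally, the second derivatives are obtained by differentiating the first-derivative expressions once more, again term by term. Group (iii) contributes nothing, being affine in $\mathbf{y}$ and constant in $(u,\tau)$, which accounts for the closing statement that the unlisted derivatives are null. Group (i) yields the Hessians $D^2_{yy}\Phi_1$, $D^2_{yy}\Phi_2$ and the mixed $D^2_{\tau,y}\Phi_1$, and group (ii) yields $D^2_{(y,u)^2}\mathcal{L}$, $D^2_{\tau,(y,u)}\mathcal{L}$ and $D^2_{\tau\tau}\mathcal{L}$. A decisive simplification is that $\pi$ and $\dot{\pi}$ are affine in $\tau$, so $\pi_{\tau\tau}=\dot{\pi}_{\tau\tau}=0$; this removes the second-order-in-$\tau$ contributions of $\pi,\dot{\pi}$ and makes the two equal cross terms combine into the displayed factor $2$ in $D^2_{\tau\tau}\mathcal{L}$. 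I expect the main obstacle to be precisely the identity $D_{\mathbf{y}}\mathcal{L}=0$: bookkeeping the boundary, initial, terminal and especially the jump contributions at $t=1$ in the integration by parts, and matching them to the data defining $\bar{p}$, is the only step that is not a mechanical application of the chain and product rules.
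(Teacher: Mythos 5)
Your proposal is correct and follows essentially the same route as the paper: Nemytskii regularity (Lemma~\ref{lemmaNemytskii}) together with the building blocks of the proof of Lemma~\ref{lemmaRegS} for the $C^2$ property, cancellation of the $a_1,a_2$ contributions via the jump and terminal conditions in~\eqref{eqCostate}, vanishing of the $z$-part by the interior costate equation, and routine chain/product-rule computations (with $\pi_{\tau\tau}=\dot\pi_{\tau\tau}=0$) for the remaining derivatives, which the paper leaves to the reader. The only cosmetic difference is that you carry out the time and space integrations by parts directly (correctly retaining the $z(0)$ boundary term, which the Lagrangian's $-\langle\bar p(0),y(0)-y_0\rangle$ term absorbs), whereas the paper invokes the adjoint identity of Lemma~\ref{lemmaAdjoint} through the representation $\bar p=\mathcal{K}^*(\cdot)$ -- the same computation packaged abstractly.
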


\begin{proof}

We give a proof of the differentiability of the Lagrangian and compute the first-order derivative with respect to $y$. The proofs for the other first-order derivatives and for the second-order derivatives are left to the reader.

\emph{Step 1: Differentiability of $\mathcal{L}$.}
Note that the mappings $G_1$,...,$G_5$ used in this proof are different from the ones introduced in the proof of Lemma~\ref{lemmaRegS}.
Observe first that all the linear terms involved in $\mathcal{L}$ are continuous and  are thus $C^2$.
The mappings $G_1 \colon h \in C(\bar{\Omega}) \mapsto \int_Q h$ and $G_2 \colon h \in L^\infty(Q) \mapsto \langle \bar{p},h \rangle_{L^2(Q)}$ are linear and continuous, thus $C^2$.  
By Lemma~\ref{lemmaNemytskii}, the mappings $(\tau,a_1) \in (0,T) \times C(\bar{\Omega}) \mapsto \Phi_1(\tau,a_1)$ and $a_2 \in C(\bar{\Omega}) \mapsto \Phi_2(a_2)$ are $C^2$, thus by composition with $G_1$, the following mappings are $C^2$:
\begin{eqnarray*}
(\tau, a_1) \in (0,T) \times C(\bar{\Omega}) \mapsto \int_{\Omega} \Phi_1(\tau,a_1) \dd x 
& \text{and} &
a_2 \in C(\bar{\Omega}) \mapsto \int_{\Omega} \Phi_2(a_2) \dd x.
\end{eqnarray*}
By using techniques of the proof of Lemma~\ref{lemmaRegS}, we can verify that the following mappings are $C^2$:
\begin{eqnarray*}
G_3 \colon (y,u,\tau) \in \mathcal{Y} \times L^\infty(Q) \times (0,T) &  \mapsto & \dot{\pi}(\cdot,\tau) L \big( \cdot,\pi(\cdot,\tau),y(\cdot),u(\cdot)\big) \in L^\infty(Q), \\
G_4 \colon (y,u,\tau) \in \mathcal{Y} \times L^\infty(Q) \times (0,T) & \mapsto & \dot{\pi}(\cdot,\tau) F \big( \cdot,\pi(\cdot,\tau),y(\cdot),u(\cdot)\big) \in L^\infty(Q).
\end{eqnarray*}
Thus, since $G_5(y,u,\tau)= G_1(G_3(y,u,\tau)+ G_2(G_4(y,u,\tau))$, the following mapping is $C^2$:
\begin{eqnarray*}
G_5:(y,u,\tau) \in \mathcal{Y} \times L^\infty(Q) \times [0,T] \mapsto \int_0^2 \dot{\pi}(t,\tau) H \big(t,\pi(t,\tau),y(t),u(t),\bar{p}(t) \big) \dd x \dd t.
\end{eqnarray*}
This concludes the first part of the proof.

\emph{Step 2: Computation of $D_{\mathbf{y}}\mathcal{L}(\bar{\mathbf{y}},\bar{u},\bar{\tau})$}.
Observe first that by~\eqref{eqCostate} we find
\begin{eqnarray*}
D_{a_1} \mathcal{L}(\bar{\mathbf{y}},\bar{u},\bar{\tau})= D_y \Phi_1(\bar{\tau},\bar{y}(1)) + [\bar{p}](1) = 0, & & 
D_{a_2} \mathcal{L}(\bar{\mathbf{y}},\bar{u},\bar{\tau})= D_y \Phi_2(\bar{\tau},\bar{y}(2)) - \bar{p}(2) = 0.
\end{eqnarray*}
Moreover,
\begin{eqnarray*}
D_y \mathcal{L}(\bar{\mathbf{y}},\bar{u},\bar{\tau})z & = & \int_0^2 \dot{\pi}(t,\bar{\tau}) D_y H[t] z(t) \dd t
+ \left\langle \bar{p}, \dot{\pi}(\cdot,\bar{\tau}) \big[ \Delta z(\cdot) + D_yF[\cdot] z(\cdot) \big] - \dot{z}(\cdot) \right\rangle_{L^2(Q)} \\
& & - \langle \bar{p}(0), \delta y(0) \rangle_{L^2(\Omega)}
+ \langle \bar{p}(2), \delta y(2) \rangle_{L^2(\Omega)}
- \langle [\bar{p}](1), \delta y(1) \rangle_{L^2(\Omega)} \\
& = & \langle \dot{\pi}(\cdot,\bar{\tau}) D_yL[\cdot], z \rangle_{L^2(Q)}
+ \left\langle \bar{p}, \dot{\pi}(\cdot,\bar{\tau}) \big[ \Delta z(\cdot) + D_y F[\cdot]z(\cdot) \big] - \dot{z}(\cdot) \right\rangle_{L^2(Q)} \\
& & - \langle \bar{p}(0), z(0) \rangle_{L^2(\Omega)}
+ \langle \bar{p}(2), z(2) \rangle_{L^2(\Omega)}
- \langle [\bar{p}](1), z(1) \rangle_{L^2(\Omega)} \\
& = &  0.
\end{eqnarray*}
Indeed, by Lemma~\ref{lemmaAdjoint} and using the definition of $\bar{p}$ given by~\eqref{eqAdjoint}, we obtain\\
\vspace*{-10pt}
\begin{flushleft}
$ \left\langle \bar{p},\, \dot{\pi}(\cdot,\bar{\tau}) \big[ \Delta z(\cdot) + D_y F[\cdot]z(\cdot) \big]- \dot{z}(\cdot) \right\rangle_{L^2(Q)}$
\end{flushleft}\vspace*{-10pt}
\begin{eqnarray*}
& = &  \left\langle (D_y\Phi_1(1,\bar{y}(1)), D_y \Phi_2(2,\bar{y}(2)), \dot{\pi}(\cdot,\bar{\tau}) D_yL[\cdot],\,
\mathcal{K}\big( \dot{\pi} \big[ \Delta z + D_y F[\cdot] z(\cdot) ]- \dot{z}(\cdot) \big) \right\rangle_{L^2(\Omega)^2 \times L^2(Q)} \\
& = & -\left\langle \big( D_y\Phi_1(1,\bar{y}(1)), D_y \Phi_2(2,\bar{y}(2)), \dot{\pi}(\cdot,\bar{\tau})D_yL[\cdot] \big),\, (z(1),z(2),z) \right\rangle_{L^2(\Omega)^2 \times L^2(Q)}\\
& =  &  -\left\langle -[\bar p](1), \bar p(2), \dot{\pi}(\cdot,\bar{\tau})D_y L[\cdot] \big),\, (z(1),z(2),z) \right\rangle_{L^2(\Omega)^2 \times L^2(Q)}.
\end{eqnarray*}
Thus the proof is complete
\end{proof}

In the next lemma, we compute the first- and second-order derivatives of $J$ at $(\bar{u},\bar{\tau})$.
Observe that the second-order derivative
$D^2 \mathcal{L}(\bar{\mathbf{y}},\bar{u},\bar{\tau})$ can be identified with a continuous endomorphism in $(C(\bar{\Omega})^2 \times \mathcal{Y}) \times L^\infty(Q) \times \R$ by writing
\begin{eqnarray*}
D^2 \mathcal{L}(\bar{\mathbf{y}},\bar{u},\bar{\tau})
\scriptscriptstyle
\begin{pmatrix}
\begin{pmatrix} \mathbf{z} \\ v \\ \theta \end{pmatrix},
\begin{pmatrix} \hat{\mathbf{z}} \\ \hat{v} \\ \hat{\theta} \end{pmatrix}
\end{pmatrix}
\displaystyle
& = &
\left\langle
D^2 \mathcal{L}(\bar{\mathbf{y}},\bar{u},\bar{\tau})
\scriptscriptstyle
\begin{pmatrix} \mathbf{z} \\ v \\ \theta \end{pmatrix} \displaystyle , \scriptscriptstyle
\begin{pmatrix} \hat{\mathbf{z}} \\ \hat{v} \\ \hat{\theta} \end{pmatrix}
\right\rangle_{(L^2(\Omega)^2 \times L^2(Q)) \times L^2(Q) \times \R}.
\end{eqnarray*}
In formula~\eqref{eqSecondDerivative} of Proposition~\ref{le:derredcost} below, the multiplication of matrices of linear mappings is defined by the usual rules of matrix calculus. The multiplication of two linear mappings is defined as their composition.

\begin{proposition}\label{le:derredcost}
For all $(u,\tau) \in L^\infty(Q) \times (0,T)$,
\begin{eqnarray} \label{eqJisL}
J(u,\tau) & = & \mathcal{L}(\mathbf{S}(u,\tau),u,\tau).
\end{eqnarray}
The first-order derivatives of $J$ at $(\bar{u},\bar{\tau})$ are given by
\begin{eqnarray*}
D_u J(\bar{u},\bar{\tau})= D_u \mathcal{L}(\bar{\mathbf{y}},\bar{u},\bar{\tau}) 
& \text{and} &
D_\tau J(\bar{u},\bar{\tau})= D_\tau \mathcal{L}(\bar{\mathbf{y}},\bar{u},\bar{\tau}),
\end{eqnarray*}
where $\bar{\mathbf{y}}= S(\bar{u},\bar{\tau})$.
The second-order derivative of $J$ at $(\bar{u},\bar{\tau})$, $D^2 J(\bar{u},\bar{\tau}),
\scriptscriptstyle
\begin{pmatrix}
\begin{pmatrix} v \\ \theta \end{pmatrix},
\begin{pmatrix} \hat{v} \\ \hat{\theta} \end{pmatrix}
\end{pmatrix}
\displaystyle
$ is given by
\begin{eqnarray} \label{eqSecondDerivative}
\left\langle
\begin{pmatrix}
[D_u \mathbf{S}(\bar{u},\bar{\tau})]^* & I & 0 \\
[D_\tau \mathbf{S}(\bar{u},\bar{\tau})]^* & 0 & 1
\end{pmatrix}
D^2 \mathcal{L}(\bar{\mathbf{y}}, \bar{u}, \bar{\tau})
\begin{pmatrix}
D_u \mathbf{S}(\bar{u},\bar{\tau}) & D_\tau \mathbf{S}(\bar{u},\bar{\tau}) \\
I & 0 \\
0 & 1
\end{pmatrix}
\scriptscriptstyle \begin{pmatrix} v \\ \theta \end{pmatrix} \displaystyle
,
\scriptscriptstyle \begin{pmatrix} \hat{v} \\ \hat{\theta} \end{pmatrix} \displaystyle
\right\rangle_{L^2(Q) \times \R}.
\end{eqnarray}
\end{proposition}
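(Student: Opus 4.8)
The plan is to prove the three assertions in turn, the entire argument resting on the composition identity~\eqref{eqJisL} together with the fact, already established in Lemma~\ref{lemmaTwiceDiffLag}, that $D_{\mathbf{y}} \mathcal{L}(\bar{\mathbf{y}},\bar{u},\bar{\tau}) = 0$. First I would verify~\eqref{eqJisL}. Fixing $(u,\tau)$, I set $y = S(u,\tau)$ and $\mathbf{y} = \mathbf{S}(u,\tau) = (y(1),y(2),y)$, and substitute into the definition of $\mathcal{L}$. Since $a_1 = y(1)$, $a_2 = y(2)$ by construction of $\mathbf{S}$, and $y(0) = y_0$ because $y$ solves~\eqref{eqSystemReparam}, the three matching pairings $\langle \bar{p}(2), y(2)-a_2 \rangle$, $\langle [\bar{p}](1), y(1)-a_1 \rangle$ and $\langle \bar{p}(0), y(0)-y_0 \rangle$ all vanish. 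It then remains to observe that the interior duality product with $\bar{p}$, together with the drift contribution $\langle \bar{p}, F \rangle$ coming from $H = \int_\Omega L \dd x + \langle \bar{p},F\rangle$, reassemble the weak formulation of the state equation~\eqref{eqSystemReparam} tested against $\bar{p}$; this cancels because $y = S(u,\tau)$ solves that equation. What survives is exactly $\int_Q \dot{\pi} L \dd x \dd t + \int_\Omega [\Phi_1(\tau,y(1)) + \Phi_2(y(2))] \dd x = \tilde{J}(y,u,\tau)$, whence $\mathcal{L}(\mathbf{S}(u,\tau),u,\tau) = J(u,\tau)$. This is essentially the verification underlying Lemma~\ref{lemmaFirstEquivalence}, so it is short.

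For the first-order derivatives I would write $\Psi(u,\tau) = (\mathbf{S}(u,\tau),u,\tau)$, so that $J = \mathcal{L} \circ \Psi$. The map $\mathbf{S}$ is $C^2$ (Lemma~\ref{lemmaRegS} and continuity of the traces at $t=1,2$ into $C(\bar{\Omega})$), and $\mathcal{L}$ is $C^2$ (Lemma~\ref{lemmaTwiceDiffLag}), so $J$ is $C^2$ by composition. The chain rule then gives $D_u J(\bar{u},\bar{\tau}) v = D_{\mathbf{y}}\mathcal{L}(\bar{\mathbf{y}},\bar{u},\bar{\tau}) D_u \mathbf{S}(\bar{u},\bar{\tau}) v + D_u \mathcal{L}(\bar{\mathbf{y}},\bar{u},\bar{\tau}) v$, and likewise for $\tau$. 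Since $D_{\mathbf{y}}\mathcal{L}(\bar{\mathbf{y}},\bar{u},\bar{\tau}) = 0$, the terms weighting $D_u \mathbf{S}$ and $D_\tau \mathbf{S}$ drop out, leaving $D_u J(\bar{u},\bar{\tau}) = D_u \mathcal{L}(\bar{\mathbf{y}},\bar{u},\bar{\tau})$ and $D_\tau J(\bar{u},\bar{\tau}) = D_\tau \mathcal{L}(\bar{\mathbf{y}},\bar{u},\bar{\tau})$.

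For the second-order derivative I would apply the second-order chain rule to $J = \mathcal{L} \circ \Psi$. Writing $\xi = (v,\theta)$ and $\eta = (\hat{v},\hat{\theta})$, it yields $D^2 J(\bar{u},\bar{\tau})(\xi,\eta) = D^2 \mathcal{L}(\bar{\mathbf{y}},\bar{u},\bar{\tau})\big( D\Psi(\bar{u},\bar{\tau})\xi,\, D\Psi(\bar{u},\bar{\tau})\eta \big) + D\mathcal{L}(\bar{\mathbf{y}},\bar{u},\bar{\tau})\big( D^2 \Psi(\bar{u},\bar{\tau})(\xi,\eta) \big)$. The crucial simplification is that the last two components of $\Psi$ are affine in $(u,\tau)$, so $D^2 \Psi(\bar{u},\bar{\tau})(\xi,\eta) = (D^2 \mathbf{S}(\bar{u},\bar{\tau})(\xi,\eta),0,0)$, and the second term reduces to $D_{\mathbf{y}}\mathcal{L}(\bar{\mathbf{y}},\bar{u},\bar{\tau}) D^2 \mathbf{S}(\bar{u},\bar{\tau})(\xi,\eta) = 0$, again by Lemma~\ref{lemmaTwiceDiffLag}. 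This is exactly what makes the Lagrangian reformulation worthwhile: it removes any need to compute the second derivative $D^2 \mathbf{S}$ of the control-to-state map. Only the first term remains, and since $D\Psi(\bar{u},\bar{\tau})(v,\theta) = \big( D_u \mathbf{S}(\bar{u},\bar{\tau}) v + D_\tau \mathbf{S}(\bar{u},\bar{\tau})\theta,\, v,\, \theta \big)$ is precisely the action of the right-hand matrix in~\eqref{eqSecondDerivative} on $(v,\theta)$, identifying $D^2\mathcal{L}$ with the continuous endomorphism described just before the statement and transposing through the adjoints $[D_u \mathbf{S}]^*$, $[D_\tau \mathbf{S}]^*$ turns $D^2\mathcal{L}(D\Psi\,\xi, D\Psi\,\eta)$ into~\eqref{eqSecondDerivative}.

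The analytic content is therefore light; the step I expect to require the most care is purely organizational, namely recognizing the left-hand matrix in~\eqref{eqSecondDerivative} as the adjoint $D\Psi(\bar{u},\bar{\tau})^*$ and matching its blocks $[D_u \mathbf{S}]^*$ and $[D_\tau \mathbf{S}]^*$ — which are the operators computed via $\mathcal{K}^*$ in Lemma~\ref{lemmaAdjoint} — with the factors of the matrix product, so that the duality pairing in $(L^2(\Omega)^2 \times L^2(Q)) \times L^2(Q) \times \R$ collapses correctly to the pairing in $L^2(Q) \times \R$. I expect this adjoint bookkeeping, rather than any estimate, to be the place where sign and domain conventions must be tracked precisely.
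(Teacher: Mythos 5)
Your proposal is correct and follows the same route as the paper's own proof: establish the identity $J(u,\tau)=\mathcal{L}(\mathbf{S}(u,\tau),u,\tau)$, differentiate it twice via the chain rule, and use $D_{\mathbf{y}}\mathcal{L}(\bar{\mathbf{y}},\bar{u},\bar{\tau})=0$ from Lemma~\ref{lemmaTwiceDiffLag} to kill the terms involving $D\mathbf{S}$ at first order and $D^2\mathbf{S}$ at second order. The paper states this in three lines and only spells out $D_uJ$; you merely make explicit what it leaves implicit (the cancellation of the pairing terms in~\eqref{eqJisL} against the weak form of~\eqref{eqSystemReparam}, the affineness of the last two components of $\Psi$, and the identification of the block matrices in~\eqref{eqSecondDerivative} with $D\Psi$ and its adjoint), so no substantive difference or gap remains.
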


\begin{proof}
Relation~\eqref{eqJisL} follows directly from the definition of $\mathcal{L}$. Differentiating this relation twice and using the fact that $D_{\mathbf{y}} \mathcal{L}(\bar{\mathbf{y}},\bar{u},\bar{\tau})= 0$, we obtain the expressions of the first- and second-order derivatives. 
For example, we have
\begin{eqnarray*}
D_u  J(\bar{u},\bar{\tau}) & = & D_u \mathcal{L}(\bar{\mathbf{y}},\bar{u},\bar{\tau}) v
+ \underbrace{D_{\mathbf{y}} \mathcal{L}(\bar{\mathbf{y}},\bar{u},\bar{\tau})}_{=0} \mathbf{S}(\bar{u},\bar{\tau}) v
= D_u \mathcal{L}(\bar{\mathbf{y}},\bar{u},\bar{\tau}) v,
\end{eqnarray*}
which yields the announced expression.
\end{proof}

\section{Regularity of the Hessian of the cost function} \label{sectionRegularity}

In Proposition~\ref{propositionContinuityL2} below, we show that $D^2J(u,\tau)$ can be extended to a continuous bilinear form on $L^2(Q) \times \R$ and that it is Lipschitz continuous with respect to $u$ (for the $L^\infty$-norm) and $\tau$. The proof of the proposition is based on two other technical lemmas (Lemmas~\ref{lemmaLisC2} and Lemma~\ref{lemmaSisC2}). They state that $\tilde{J}$ and $S$ satisfy the following property: Their second-order derivatives can be extended to a continuous bilinear mapping on an appropriate $L^2$ space and they are Lipschitz-continuous for the $L^\infty$-norm.

All along the section, a pair $(\bar{u},\bar{\tau}) \in \mathcal{U}_{\text{ad}} \times (0,T)$ is fixed.
Let us fix two real numbers $0 <\varepsilon_0< \min(\bar{\tau},T-\bar{\tau})$ and $c>0$ satisfying the following properties:
\begin{itemize}
\item For all $u \in \mathcal{U}_{\text{ad}}$ and $\tau \in (0,T)$, setting $y= S(u,\tau)$, we have
\begin{eqnarray*}
\| u-\bar{u} \|_{L^\infty(Q)} + |\tau-\bar{\tau}| \leq \varepsilon_0 
& \Rightarrow &
\begin{cases}
\begin{array}{l}
\| y \|_{L^\infty(Q)} \leq c, \\
\| D_y F_1 \big(\cdot,y(\cdot),u(\cdot) \big) \|_{L^\infty(Q)} \leq c, \\
\| D_y F_2 \big(\cdot,y(\cdot),u(\cdot) \big) \|_{L^\infty(Q)} \leq c.
\end{array}
\end{cases}
\end{eqnarray*}
\item For all $\tau \in [\bar{\tau}-\varepsilon_0,\bar{\tau}+ \varepsilon_0]$ and $t \in [0,2]$, the inequality
$1/c \leq \dot{\pi}(t,\tau) \leq c$ holds.
\end{itemize}
We define:
\begin{eqnarray*}
\mathcal{W}& = & \left\{ (u,\tau) \in \mathcal{U}_{\text{ad}} \times (0,T) \,|\,
\| u - \bar{u} \|_{L^\infty(Q)} + |\tau-\bar{\tau}| \leq \varepsilon_0 \right\}.
\end{eqnarray*}

\begin{proposition} \label{propositionContinuityL2}
There exist constants $C_{1a}>0$ and $C_{1b}>0$ such that for all $(u,\tau) \in \mathcal{W}$, for all $(v,\theta)$ and $(\hat{v},\hat{\theta}) \in L^\infty(Q) \times \R$,
\begin{eqnarray*}
 \big| D^2 J(u,\tau)\big( (v,\theta),(\hat{v},\hat{\theta}) \big) \big|
& \leq & C_{1a} \| (v,\theta) \|_{L^2(Q) \times \R} \| (\hat{v},\hat{\theta}) \|_{L^2(Q) \times \R}, \\
 \big| \big(D^2 J(u,\tau)-D^2J(\bar{u},\bar{\tau}) \big) \big( (v,\theta),(\hat{v},\hat{\theta}) \big) \big| 
&  \leq & C_{1b} \big( \| u-\bar{u} \|_{L^\infty(Q)} + |\tau-\bar{\tau} |\big) \\
& & \times\| (v,\theta) \|_{L^2(Q) \times \R} \| (\hat{v},\hat{\theta}) \|_{L^2(Q) \times \R}.
\end{eqnarray*}
\end{proposition}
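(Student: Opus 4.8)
The plan is to derive, at every point of $\mathcal{W}$, the same factorized representation of the Hessian that Proposition~\ref{le:derredcost} establishes at $(\bar u,\bar\tau)$, and then to read off both estimates from uniform boundedness and $L^\infty$-Lipschitz continuity of the two factors entering this representation. First I would fix $(u,\tau)\in\mathcal{W}$, set $y=S(u,\tau)$, and introduce the costate $p_{u,\tau}\in W((0,1),(1,2))$ solving the costate equation~\eqref{eqCostate} with $(\bar y,\bar u,\bar\tau)$ replaced by $(y,u,\tau)$; with this costate one builds the Lagrangian exactly as before. The computation of Lemma~\ref{lemmaTwiceDiffLag} then goes through verbatim at $(u,\tau)$, so that $D_{\mathbf{y}}\mathcal{L}(\mathbf{S}(u,\tau),u,\tau)=0$, and differentiating the identity $J=\mathcal{L}(\mathbf{S}(\cdot),\cdot,\cdot)$ twice (as in Proposition~\ref{le:derredcost}) yields the representation~\eqref{eqSecondDerivative} at $(u,\tau)$. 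Writing $M(u,\tau)$ for the right block operator of~\eqref{eqSecondDerivative}, which sends $(v,\theta)\in L^2(Q)\times\R$ to $(D_u\mathbf{S}(u,\tau)v+D_\tau\mathbf{S}(u,\tau)\theta,\,v,\,\theta)$, one has $D^2J(u,\tau)((v,\theta),(\hat v,\hat\theta))=\langle D^2\mathcal{L}\,M(u,\tau)(v,\theta),\,M(u,\tau)(\hat v,\hat\theta)\rangle$, the inner product being taken in $(L^2(\Omega)^2\times L^2(Q))\times L^2(Q)\times\R$.

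The statement then reduces to two facts, which are precisely the content of Lemmas~\ref{lemmaSisC2} and~\ref{lemmaLisC2}: (S) the operator $M(u,\tau)$ is bounded from $L^2(Q)\times\R$ into the product $L^2$-space, with a bound uniform over $\mathcal{W}$, and the map $(u,\tau)\mapsto M(u,\tau)$ is Lipschitz for the $L^\infty$-norm when measured in this $L^2$-operator norm; (L) the bilinear form $D^2\mathcal{L}$ extends to a bounded bilinear form on the product $L^2$-space, uniformly over $\mathcal{W}$, and is $L^\infty$-Lipschitz in $(u,\tau)$. The first estimate follows immediately by writing $|D^2J(u,\tau)((v,\theta),(\hat v,\hat\theta))|\le\|D^2\mathcal{L}\|\,\|M(u,\tau)\|^2\,\|(v,\theta)\|_{L^2(Q)\times\R}\|(\hat v,\hat\theta)\|_{L^2(Q)\times\R}$ and absorbing the uniform bounds of (S) and (L) into $C_{1a}$.

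For the Lipschitz estimate I would telescope $M^*(D^2\mathcal{L})M$ at $(u,\tau)$ against its value at $(\bar u,\bar\tau)$. Abbreviating $A=M(u,\tau)$, $B=M(\bar u,\bar\tau)$, $P=D^2\mathcal{L}(u,\tau)$, $\bar P=D^2\mathcal{L}(\bar u,\bar\tau)$, I would use $A^*PA-B^*\bar PB=(A-B)^*PA+B^*(P-\bar P)A+B^*\bar P(A-B)$ and bound each of the three increments by the product of a uniform factor from (S) or (L) and a Lipschitz factor of the form $C(\|u-\bar u\|_{L^\infty(Q)}+|\tau-\bar\tau|)$, collecting everything into $C_{1b}(\|u-\bar u\|_{L^\infty(Q)}+|\tau-\bar\tau|)$. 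This is routine once (S) and (L) are available.

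The genuine difficulty is hidden inside (S) and (L) and is the usual two-norm discrepancy. The critical term is the one arising from $\langle p_{u,\tau},D^2_{yy}F[\cdot](z,\hat z)\rangle$ inside $D^2\mathcal{L}$, in which the costate is only of class $L^2$: the triple product $p\cdot z\cdot\hat z$ is integrable, and controllable by $\|v\|_{L^2(Q)}\|\hat v\|_{L^2(Q)}$, only because Lemma~\ref{lemmaRegularityLinParabolic}(b)--(c) makes $\mathcal{K}$ send an $L^2(Q)$ source into $\mathcal{Y}\hookrightarrow L^\infty(Q)$, so that the linearized states $z=D_u\mathbf{S}(u,\tau)v$ satisfy $\|z\|_{L^\infty(Q)}\le C\|v\|_{L^2(Q)}$ uniformly on $\mathcal{W}$; this parabolic smoothing is exactly what reconciles the $L^\infty$-differentiability of $J$ with the $L^2$-continuity of its Hessian. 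I expect the most delicate point to be the $L^\infty$-Lipschitz dependence needed in the middle increment, since $D^2\mathcal{L}(u,\tau)$ depends on $(u,\tau)$ not only explicitly but also through $S(u,\tau)$ and, above all, through $p_{u,\tau}$, which solves a backward equation whose coefficients are built from the trajectory; establishing the $L^\infty$-Lipschitz continuity of the state and the costate in $(u,\tau)$ is the real work behind Lemma~\ref{lemmaLisC2}.
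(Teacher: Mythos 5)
Your route is genuinely different from the paper's, and as written it has a gap. The paper never uses the Lagrangian away from $(\bar u,\bar\tau)$: its proof is a two-line chain rule applied to $J(u,\tau)=\tilde J(S(u,\tau),u,\tau)$, namely
$D^2J(u,\tau)\big((v,\theta),(\hat v,\hat\theta)\big)=D_y\tilde J\big(S(u,\tau),u,\tau\big)D^2S(u,\tau)\big((v,\theta),(\hat v,\hat\theta)\big)+D^2\tilde J\big(S(u,\tau),u,\tau\big)\big(DS(u,\tau)(v,\theta),v,\theta\big)\big(DS(u,\tau)(\hat v,\hat\theta),\hat v,\hat\theta\big)$,
after which both estimates drop out of Lemmas~\ref{lemmaLisC2} and~\ref{lemmaSisC2} with explicit constants; the costate never appears. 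Your factorization $M(u,\tau)^*\,D^2\mathcal{L}_{u,\tau}\,M(u,\tau)$ at every point of $\mathcal{W}$ is valid in principle (freeze the costate $p_{u,\tau}$, use $J=\mathcal{L}$ along solutions of the state equation, and kill the $D^2\mathbf{S}$ term via $D_{\mathbf{y}}\mathcal{L}=0$ at the matching point), but your claim that (S) and (L) are ``precisely the content'' of Lemmas~\ref{lemmaSisC2} and~\ref{lemmaLisC2} is inaccurate: Lemma~\ref{lemmaLisC2} concerns $D_y\tilde J$ and $D^2\tilde J$ only, its proof is a one-line consequence of the Nemytskii Lemmas~\ref{lemmaNemytskii} and~\ref{lemmaNemytskii2}, and no costate enters it. Nowhere does the paper prove uniform bounds or $L^\infty$-Lipschitz stability of $p_{u,\tau}$ in $(u,\tau)$, which your middle telescoping increment $B^*(P-\bar P)A$ requires; you would need a new backward-in-time analogue of Lemma~\ref{lemmaKisC2}, handling the jump $-[\,p\,](1)=D_y\Phi_1(\tau,y(1))$ whose datum itself moves with $(u,\tau)$. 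This is doable with the techniques of Section~\ref{sectionRegularity}, but it is extra work the paper's route is designed to avoid, not ``the real work behind Lemma~\ref{lemmaLisC2}''.

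The step that actually fails under your own premises is the uniform $L^2$-bound (L) on $D^2\mathcal{L}_{u,\tau}$. You state that the costate is ``only of class $L^2$'' and check the $D^2_{yy}H$ block, where the smoothing bound $\|z\|_{L^\infty(Q)}\le C\|v\|_{L^2(Q)}$ indeed saves the triple product. But the $D^2_{uu}H$ block contains $\int_Q \dot\pi(t,\tau)\,p_{u,\tau}\,D^2_{uu}F\,v\,\hat v\dd x\dd t$, and here neither factor $v,\hat v$ passes through $\mathcal{K}$: with $p_{u,\tau}\in L^2(Q)$ only, three $L^2$ factors do not yield an $L^1$ product with a bound $C\|v\|_{L^2(Q)}\|\hat v\|_{L^2(Q)}$, so (L) is false as stated. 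The repair is to observe that $p_{u,\tau}$ is in fact uniformly bounded in $L^\infty(Q)$ on $\mathcal{W}$: apply Lemma~\ref{lemmaRegularityLinParabolic}(b), after the change of variable $t\mapsto 2-t$, first on $(1,2)$ with terminal datum $D_y\Phi_2(y(2))$ and then on $(0,1)$ with terminal datum $p(1^+)+D_y\Phi_1(\tau,y(1))$, noting that these data and the source $\dot\pi(\cdot,\tau)D_yL$ are uniformly bounded in $L^\infty$ on $\mathcal{W}$ by Assumption~\ref{hypRegularity}. With that observation, together with the missing costate-stability lemma above, your argument closes and yields the same conclusion; without them, the proposal does not establish either estimate of the proposition.
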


\begin{proof}
The second-order derivative of $J$ is given by:
\begin{eqnarray*}
D^2 J(u,\tau) \big( (v,\theta), (\hat{v},\hat{\theta}) \big)
& = &
D_{y}\tilde{J} \big( S(u,\tau),u,\tau \big) D^2 S(u,\tau) \big( (v,\theta), (\hat{v},\hat{\theta}) \big) \\
& & + D^2 \tilde{J} \big( S(u,\tau), u, \tau \big)
\big( DS(u,\tau)(v,\theta), v, \theta \big)
\big( DS(u,\tau)(\hat{v},\hat{\theta}), \hat{v}, \hat{\theta} \big).
\end{eqnarray*}
By Lemmas~\ref{lemmaLisC2} and~\ref{lemmaSisC2}, the result holds for $C_{1a}= C_{2a} C_{8a} + C_{3a} (1+ C_{7a}^2)$ and
\begin{eqnarray*}
C_{1b} & = & C_{2a} C_{8b} + C_{2b} C_{8a} + 2C_{3a} C_{7b} \sqrt{1 + C_{7a}^2} + C_{3b}(1+ C_{7a}^2),
\end{eqnarray*}
which concludes the proof.
\end{proof}

\begin{lemma} \label{lemmaLisC2}
There exist constants $C_{2a}>0$, $C_{2b}>0$, $C_{3a}>0$, and $C_{3b}>0$ such that for all $(u,\tau) \in \mathcal{W}$, for all $(z,v,\theta)$ and $(\hat{z},\hat{v},\hat{\theta}) \in \mathcal{Y} \times L^\infty(Q) \times \R$,
\begin{eqnarray*}
 \big| D_y\tilde{J}(y,u,\tau)(z,v,\theta) \big|
& \leq & C_{2a} \| (y,v,\theta) \|_{L^2(Q)^2 \times \R}, \\
 \big| \big( D_y\tilde{J}(y,u,\tau)- D\tilde{J}(\bar{y},\bar{u},\bar{\tau})\big) (z,v,\theta) \big| 
&  \leq & C_{2b} \| (z,v,\theta) \|_{L^2(Q)^2 \times \R} \\
& & \hspace*{-3pt} \times \big( \| y-\bar{y} \|_{L^\infty(Q)} + \| u- \bar{u} \|_{L^\infty(Q)} + |\tau-\bar{\tau}| \big) , \\
 \big| D^2\tilde{J}(y,u,\tau)\big( (z,v,\theta),(\hat{z},\hat{v},\hat{\theta}) \big) \big|
& \leq & C_{3a} \| (z,v,\theta) \|_{L^2(Q)^2 \times \R} \| (\hat{z},\hat{v},\hat{\theta}) \|_{L^2(Q)^2 \times \R}, \\
 \big| \big( D^2\tilde{J}(y,u,\tau) - D^2\tilde{J}(\bar{y},\bar{u},\bar{\tau}) \big) \big( (z,v,\theta),(\hat{z},\hat{v},\hat{\theta}) \big) \big| 
&  \leq & C_{3b} \| (z,v,\theta) \|_{L^2(Q)^2 \times \R} \| (\hat{z},\hat{v},\hat{\theta}) \|_{L^2(Q)^2 \times \R}  \\
& & \hspace*{-3pt} \times \big(\|y - \bar{y} \|_{L^\infty(Q)} + \| u- \bar{u} \|_{L^\infty(Q)} + |\tau-\bar{\tau}| \big) ,
\end{eqnarray*}
where $y= S(u,\tau)$.
\end{lemma}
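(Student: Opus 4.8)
The plan is to differentiate $\tilde{J}$ explicitly and then reduce all four estimates to the single mechanism underlying the two-norm discrepancy: each term is a (bi)linear expression whose coefficient is a derivative of $L$, $\Phi_1$, or $\Phi_2$ that is uniformly bounded in $L^\infty$ over $\mathcal{W}$, paired against directions controlled only in $L^2$. First I would split $\tilde{J}$ into the running part $\int_Q \dot{\pi}(t,\tau) L(t,\pi(t,\tau),y(t),u(t))\dd x\dd t$ and the terminal part $\int_\Omega[\Phi_1(\tau,y(1))+\Phi_2(y(2))]\dd x$. Differentiating through the smooth outer integrals and the maps $\pi,\dot{\pi}$ (which are affine, hence $C^\infty$, in $\tau$), the chain rule expresses $D\tilde{J}$ and $D^2\tilde{J}$ as integrals over $Q$ against the first and second derivatives of $L$ in $(t,y,u)$ evaluated at $(t,\pi(t,\tau),y(t),u(t))$, together with integrals over $\Omega$ against the derivatives of $\Phi_1$ in $(\tau,y)$ at $(\tau,y(1))$ and of $\Phi_2$ in $y$ at $y(2)$. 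By Assumption~\ref{hypRegularity} and Lemma~\ref{lemmaNemytskii}, these Nemytskii operators are $C^2$ from $L^\infty$, with derivatives that are bounded and Lipschitz on $L^\infty$-bounded sets.

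For the boundedness estimates (constants $C_{2a}$, $C_{3a}$) I would bound each resulting term by H\"older's inequality, placing the coefficient in $L^\infty(Q)$ or $L^\infty(\Omega)$ and the directions in $L^2$. The essential point is uniformity: for $(u,\tau)\in\mathcal{W}$ the state $y=S(u,\tau)$ is bounded in $L^\infty$ by the constant $c$ fixed at the start of the section, and $1/c\le\dot{\pi}\le c$, so every coefficient has an $L^\infty$ bound independent of $(u,\tau)\in\mathcal{W}$. A representative running term is $\int_Q\dot{\pi}\,D_{yy}^2L[t]\,z\,\hat{z}\dd x\dd t$, bounded by $c\,\|D_{yy}^2L[t]\|_{L^\infty(Q)}\,\|z\|_{L^2(Q)}\|\hat{z}\|_{L^2(Q)}$; the mixed $(y,u)$ and pure $u$ terms are identical, and the $\theta$-direction only adds the $L^\infty$-bounded factors $\dot{\pi}_\tau$, $\pi_\tau$, $D_tL[t]$, $D_{tt}^2L[t]$. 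The terminal contributions, e.g.\ $\int_\Omega D_{yy}^2\Phi_1(\tau,y(1))(z(1),\hat{z}(1))\dd x$, are bounded by $\|D_{yy}^2\Phi_1\|_{L^\infty(\Omega)}\|z(1)\|_{L^2(\Omega)}\|\hat{z}(1)\|_{L^2(\Omega)}$, i.e.\ by the $L^2(\Omega)$-norms of the time-traces, which is exactly the $L^2$-type control claimed.

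For the Lipschitz estimates (constants $C_{2b}$, $C_{3b}$) I would subtract the expressions at $(y,u,\tau)$ and $(\bar{y},\bar{u},\bar{\tau})$ and apply H\"older once more, now estimating the \emph{difference} of the coefficients. Here the Lipschitz continuity of order $2$ (Definition~\ref{defNemytskii}, through Lemma~\ref{lemmaNemytskii}) yields, for the second derivatives, a bound of the form $\|D^2L(\cdot,\pi(\cdot,\tau),y,u)-D^2L(\cdot,\pi(\cdot,\bar{\tau}),\bar{y},\bar{u})\|_{L^\infty(Q)}\lesssim\|y-\bar{y}\|_{L^\infty(Q)}+\|u-\bar{u}\|_{L^\infty(Q)}+|\tau-\bar{\tau}|$, where the $|\tau-\bar{\tau}|$ contribution comes from the Lipschitz dependence of $L$ on its first slot composed with the uniformly Lipschitz map $\tau\mapsto\pi(\cdot,\tau)$, together with the smooth, uniformly bounded dependence of $\dot{\pi}$ on $\tau$; the first-derivative coefficients are treated the same way using the order-one Lipschitz bound contained in the same hypothesis. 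Pairing this $L^\infty$-small difference with the $L^2$-controlled directions gives the stated estimates, the $\Phi$-terms again being handled via the $L^2(\Omega)$ traces and the explicit (smooth) $\tau$-slot of $\Phi_1$.

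The main obstacle is precisely the two-norm discrepancy: although $\tilde{J}$ is only $C^2$ for the $L^\infty$-topology, since the Nemytskii operators are not differentiable from $L^2$ to $L^2$, its derivatives nonetheless extend to the weaker $L^2$-based norm because in each term a single coefficient, uniformly bounded in $L^\infty$ over $\mathcal{W}$, can be factored out, leaving a product of directions estimable in $L^2$. Making the constants genuinely uniform over $\mathcal{W}$ rests entirely on the a priori $L^\infty$ bounds encoded in the choice of $\varepsilon_0$ and $c$. The only further care concerns the terminal terms, whose control needs the $L^2(\Omega)$-bounds on the traces $z(1),z(2),\hat{z}(1),\hat{z}(2)$; these are available because, in the composition performed in Proposition~\ref{propositionContinuityL2}, the relevant directions are traces of solutions of the linearized equation and are therefore controlled through the operator $\mathcal{K}$.
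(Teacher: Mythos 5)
Your argument is correct and is essentially the paper's own proof spelled out: the paper disposes of this lemma in two lines, observing that $\{(S(u,\tau),u,\tau) \,|\, (u,\tau)\in\mathcal{W}\}$ is bounded in $L^\infty$ and that $\tilde{J}$ is the integral of Nemytskii operators, then invoking Lemmas~\ref{lemmaNemytskii} and~\ref{lemmaNemytskii2}, whose proofs consist of exactly your mechanism of factoring out a uniformly $L^\infty$-bounded coefficient and applying H\"older's inequality to the $L^2$-controlled directions. Your closing caveat about the traces $z(1)$, $z(2)$ --- which the stated $L^2(Q)$-norm on $z$ alone does not control, so that the terminal terms genuinely require the trace bounds available for directions of the form $DS(u,\tau)(v,\theta)$ (via $\mathcal{K}$ and the $\mathcal{Y}$-bounds of Lemma~\ref{lemmaSisC2}) in the application within Proposition~\ref{propositionContinuityL2} --- correctly identifies an imprecision that the paper's one-line proof silently glosses over.
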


\begin{proof}
By construction, the set $\{ S(u,\tau),u,\tau) \,|\, (u,\tau) \in \mathcal{W} \}$ is bounded in $L^\infty(Q) \times L^\infty(Q) \times (0,T)$, and $\tilde{J}$ is the integral of a Nemytskii operator. The result then follows from Lemma~\ref{lemmaNemytskii} and Lemma~\ref{lemmaNemytskii2}.
\end{proof}

Define the mapping $\tilde{F}$ as follows:
\begin{eqnarray*}
(y,u,\tau) \in \mathcal{Y} \times \mathcal{U}_{\text{ad}} \times (0,T)
\mapsto \tilde{F}(y,u,\tau) = \dot{\pi}(\cdot,\tau)F(\cdot,\pi(\cdot,\tau),y(\cdot),u(\cdot) \big) \in L^\infty(Q).
\end{eqnarray*}

\begin{lemma} \label{lemmaFisC2}
The mapping $\tilde{F}$ is $C^2$. Moreover, there exist constants $C_{4a}$, $C_{4b}$, $C_{5a}$ and $C_{5b} > 0$ such that for all $(u,\tau) \in \mathcal{W}$, for all $(z,v,\theta)$ and $(\hat{z},\hat{v},\hat{\theta}) \in \mathcal{Y} \times L^\infty(Q) \times \R$,
\begin{eqnarray*} \|D\tilde{F}(y,u,\tau)(z,v,\theta)\|_{L^\infty(Q)} & \leq & C_{4a} \| (z,v,\theta) \|_{L^2(Q)^2 \times \R}, \\
\big\| \big( D \tilde{F}(y,u,\tau)- D \tilde{F}(\bar{y},\bar{u},\bar{\tau}) \big) (z,v,\theta) \big\|_{L^\infty(Q)} 
& \leq & C_{4b} \| (z,v,\theta) \|_{L^2(Q)^2 \times \R} \\
& & \times \big( \| y - \bar{y} \|_{L^\infty(Q)} + \| u-\bar{u} \|_{L^\infty(Q)} + |\tau-\bar{\tau}| \big), \\
 \big\| D^2\tilde{F}(y,u,\tau) \big( (z,v,\theta),(\hat{z},\hat{v},\hat{\theta}) \big) \big\|_{L^\infty(Q)}
& \leq & C_{5a} \| (z,v,\theta) \|_{L^2(Q)^2 \times \R} \| (\hat{z},\hat{v},\hat{\theta}) \|_{L^2(Q)^2 \times \R},
\end{eqnarray*}\vspace*{-20pt}
\begin{flushleft}
$\big\| \big( D^2\tilde{F}(y,u,\tau) - D^2 \tilde{F}(\bar{y},\bar{u},\bar{\tau}) \big) \big( (z,v,\theta),(\hat{z},\hat{v},\hat{\theta}) \big) \big \|_{L^\infty(Q)}$
\end{flushleft}
\vspace*{-15pt}
\begin{flushright}
$\leq  C_{5b} \| (z,v,\theta) \|_{L^2(Q)^2 \times \R} \| (\hat{z},\hat{v},\hat{\theta}) \|_{L^2(Q)^2 \times \R} \big( \| y - \bar{y} \| + \| u-\bar{u} \|_{L^\infty(Q)} + |\tau-\bar{\tau}| \big),$
\end{flushright}
where $y= S(u,\tau)$.
\end{lemma}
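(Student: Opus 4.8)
The plan is to obtain all four estimates from the building blocks already assembled in the proof of Lemma~\ref{lemmaRegS} together with the Nemytskii estimates of the Appendix, so that only the bookkeeping specific to $\tilde{F}$ remains. \textbf{Step 1 ($C^2$ regularity).} First I would write $\tilde{F}$ as a product of maps already shown to be $C^2$. Reusing the notation of the proof of Lemma~\ref{lemmaRegS}, the map $G_4\colon \tau \mapsto \dot{\pi}(\cdot,\tau) \in L^\infty(0,2)$ is $C^2$, the map $G_3\colon\tau\mapsto\pi(\cdot,\tau)\in L^\infty(0,2)$ is $C^2$, and the Nemytskii composition $G_6\colon (\rho,y,u)\mapsto F(\cdot,\rho(\cdot),y(\cdot),u(\cdot)) \in L^\infty(Q)$ is $C^2$ by Lemma~\ref{lemmaNemytskii}; its restriction to $L^\infty(0,2)\times\mathcal{Y}\times L^\infty(Q)$ is $C^2$ as well. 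Since $\tilde{F}(y,u,\tau)=G_4(\tau)\cdot G_6(G_3(\tau),y,u)$ and the pointwise product $L^\infty(0,2)\times L^\infty(Q)\to L^\infty(Q)$ is continuous and bilinear, hence $C^2$, the composition $\tilde{F}$ is $C^2$.

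\textbf{Step 2 (explicit derivatives).} Next I would record the derivative formulas obtained by the chain and product rules, since all four estimates are read off from them. Abbreviating by $[\cdot]$ evaluation at $\big(\cdot,\pi(\cdot,\tau),y(\cdot),u(\cdot)\big)$ at the point $(y,u,\tau)$ under consideration, the first derivative in a direction $(z,v,\theta)$ is the pointwise expression
\begin{align*}
D\tilde{F}(y,u,\tau)(z,v,\theta) = {}& \dot{\pi}(\cdot,\tau)\big(D_yF[\cdot]\,z + D_uF[\cdot]\,v\big) \\
& {}+ \theta\big(\dot{\pi}_\tau(\cdot)\,F[\cdot] + \dot{\pi}(\cdot,\tau)\,D_tF[\cdot]\,\pi_\tau(\cdot)\big),
\end{align*}
and $D^2\tilde{F}(y,u,\tau)$ is a finite sum of pointwise products of a coefficient (a product of factors taken among $\dot{\pi}$, $\dot{\pi}_\tau$, $\pi_\tau$ and a first- or second-order partial of $f$) with a pair of directions taken among $z\hat{z}$, $z\hat{v}$, $v\hat{v}$, $\theta\hat{z}$, $\theta\hat{v}$ and $\theta\hat{\theta}$.

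\textbf{Step 3 (uniform bounds; the $C_{4a},C_{5a}$ estimates).} Over $\mathcal{W}$ every coefficient appearing in Step~2 is bounded in $L^\infty(Q)$ by a constant depending only on $c$: indeed $1/c\le\dot{\pi}\le c$ by the defining property of $\varepsilon_0$, and $|\dot{\pi}_\tau|\le 1$, $|\pi_\tau|\le 2$ by construction, while the partials $D_yF,D_uF,D_tF$ and the second-order partials of $f$ are bounded, uniformly over $\mathcal{W}$, by the boundedness condition of order~$2$: their arguments stay in a fixed bounded set because $\|y\|_{L^\infty(Q)}\le c$ and $\|u\|_{L^\infty(Q)}\le\|\bar{u}\|_{L^\infty(Q)}+\varepsilon_0$ on $\mathcal{W}$. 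I would then apply Lemma~\ref{lemmaNemytskii2} to each summand, which converts each pointwise product of a bounded coefficient with the directions into the required $L^\infty(Q)$-valued bound measured by the $L^2(Q)^2\times\R$ norms of the directions; this yields $C_{4a}$ and $C_{5a}$.

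\textbf{Step 4 (difference estimates; the main obstacle).} The technical heart is the Lipschitz bounds $C_{4b}$ and $C_{5b}$, where each coefficient must be compared at $(y,u,\tau)$ and at $(\bar{y},\bar{u},\bar{\tau})$. Here I would telescope each coefficient difference into three contributions and bound them separately: (i) the variation of $\dot{\pi}(\cdot,\tau)$, $\dot{\pi}_\tau$ and $\pi_\tau$ with $\tau$, which is Lipschitz in $|\tau-\bar{\tau}|$ since $\pi$ is smooth in $\tau$ on $[\bar{\tau}-\varepsilon_0,\bar{\tau}+\varepsilon_0]$; (ii) the variation of the evaluation point $\pi(\cdot,\tau)$ inside the partials of $f$, again Lipschitz in $|\tau-\bar{\tau}|$ through the Lipschitz condition of order~$2$; and (iii) the variation of the arguments $(y,u)$, controlled by the Lipschitz condition of order~$2$ on the first- and second-order partials of $f$, which produces the factor $\|y-\bar{y}\|_{L^\infty(Q)}+\|u-\bar{u}\|_{L^\infty(Q)}$. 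Collecting the three contributions produces the factor $\|y-\bar{y}\|_{L^\infty(Q)}+\|u-\bar{u}\|_{L^\infty(Q)}+|\tau-\bar{\tau}|$, and a final application of Lemma~\ref{lemmaNemytskii2} to each telescoped term handles the remaining products of directions, yielding $C_{4b}$ and $C_{5b}$. The delicate point, which is precisely what makes the uniform constants available, is that the Lipschitz constants furnished by Lemma~\ref{lemmaNemytskii2} must not depend on $(u,\tau)$; this is guaranteed by the two defining properties of $\varepsilon_0$ and $c$, which keep both the coefficients and the state uniformly bounded on $\mathcal{W}$.
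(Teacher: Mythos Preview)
Your overall strategy coincides with the paper's: the proof there is the one-liner ``this result follows from Lemma~\ref{lemmaNemytskii} and Lemma~\ref{lemmaNemytskii2}'', and your Steps~1--4 are exactly an unpacking of that sentence (factor $\tilde F$ through the $C^2$ maps $G_3,G_4,G_6$ of Lemma~\ref{lemmaRegS}, write out the derivatives, then read off boundedness and Lipschitz bounds from the Nemytskii estimates). So in spirit you are on the paper's route.

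There is, however, a concrete gap in Step~3 (and it propagates to Step~4): you claim that Lemma~\ref{lemmaNemytskii2} ``converts each pointwise product of a bounded coefficient with the directions into the required $L^\infty(Q)$-valued bound measured by the $L^2(Q)^2\times\R$ norms of the directions.'' That is not what Lemma~\ref{lemmaNemytskii2} says: it produces an $L^1$ bound on $D^\ell\Phi(y)(z_1,\dots,z_\ell)$ from $L^\ell$ bounds on the $z_i$, via H\"older. More fundamentally, no H\"older-type argument can give the pointwise estimate $\|\dot\pi\, D_yF[\cdot]\,z\|_{L^\infty(Q)}\le C\|z\|_{L^2(Q)}$, since a bounded multiplier cannot improve integrability; take $z$ supported on a set of small measure with $\|z\|_{L^\infty}=1$ to see the first inequality fails. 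The stated $L^\infty(Q)$ on the left is in fact a misprint in the lemma: in its only use (proof of Lemma~\ref{lemmaSisC2}) the output of $D\tilde F$ and $D^2\tilde F$ is immediately fed into $\mathcal Z(u,\tau)$, which requires merely an $L^2(Q)$ input, and the constants $C_{7a}=C_{6a}C_{4a}$, $C_{8a}=C_{6a}C_{5a}(1+C_{7a}^2)$ match an $L^2(Q)$ left-hand side. Once you replace $L^\infty(Q)$ by $L^2(Q)$ on the left, your Steps~3--4 go through verbatim: each summand in $D\tilde F$ is (bounded coefficient)$\times$(one direction), hence $L^2$ with norm controlled by the $L^2$ norm of the direction; each summand in $D^2\tilde F$ is (bounded coefficient)$\times$(product of two directions), hence $L^2$ once you pair one $L^\infty$ factor (available because $z\in\mathcal Y\hookrightarrow L^\infty(Q)$ and $v\in L^\infty(Q)$) with one $L^2$ factor, or use Lemma~\ref{lemmaNemytskii2} to land in $L^1$ and note that this already suffices for the downstream application.
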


\begin{proof}
Again, this result follows from Lemma~\ref{lemmaNemytskii} and Lemma~\ref{lemmaNemytskii2}.
\end{proof}

For all $(u,\tau) \in \mathcal{U}_{\text{ad}} \times (0,T)$, we define the operator $\mathcal{Z}(u,\tau): \xi \in L^2(Q) \mapsto z \in \mathcal{Y}$, where $z$ is the solution to:
\begin{eqnarray} \label{eqLinearizedSystemGen}
\begin{cases}
\begin{array}{rcll}
\dot{z}(t)-\dot{\pi}(t,\tau) \big[ \Delta z(t) + D_y F \big(t, \pi(t,\tau), y(t), u(t) \big) z(t) \big]&=& \xi(t)  & \text{ in $Q$}, \\
z(t)&=& 0 & \text{ on $\Sigma$}, \\
z(0)&=& 0 & \text{ in $\Omega$},
\end{array}
\end{cases}
\end{eqnarray}
and where $y= S(u,\tau)$.

\begin{lemma} \label{lemmaKisC2}
There exist constants $C_{6a}>0$ and $C_{6b} > 0$ such that for all $(u,\tau) \in \mathcal{W}$ and for all $\xi \in L^2(Q)$,
\begin{eqnarray*}
 \big\| \mathcal{Z}(u,\tau)\xi \big\|_{L^\infty(\Omega)^2 \times \mathcal{Y}} 
 &\leq& C_{6a} \| \xi \|_{L^2(Q)}, \\
 \big\| \big( \mathcal{Z}(u,\tau)-\mathcal{Z}(\bar{u},\bar{\tau}) \big) \xi \big\|_{L^\infty(\Omega)^2 \times \mathcal{Y}} 
 &\leq& C_{6b} \big(\| y - \bar{y} \|_{L^\infty(Q)} + \|u -\bar{u} \|_{L^\infty(Q)} + |\tau-\bar{\tau}| \big) \| \xi \|_{L^2(Q)},
\end{eqnarray*}
where $y= S(u,\tau)$.
\end{lemma}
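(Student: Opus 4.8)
The plan is to derive both inequalities from the linear regularity theory of Lemma~\ref{lemmaRegularityLinParabolic}, by reading \eqref{eqLinearizedSystemGen} as an instance of the model equation \eqref{eq35} with $\nu(t)=\dot{\pi}(t,\tau)$ and $b=D_yF(t,\pi(t,\tau),y(t),u(t))$. For the uniform bound, I would first note that for $(u,\tau)\in\mathcal{W}$ the defining properties of $\varepsilon_0$ and $c$ supply exactly the hypotheses of that lemma: the second bullet gives $1/c\le\dot{\pi}(t,\tau)\le c$, and the first bullet gives $\|D_yF(\cdot,y(\cdot),u(\cdot))\|_{L^\infty(Q)}\le c$, both uniformly over $\mathcal{W}$. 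Since the initial datum is $z_0=0$, part~(c) of Lemma~\ref{lemmaRegularityLinParabolic} yields $z\in\mathcal{Y}$ with $\|z\|_{\mathcal{Y}}\le C\|\xi\|_{L^2(Q)}$ for a constant $C$ depending only on $c$, hence independent of $(u,\tau)$; the traces $z(1),z(2)$ are then controlled by $\|z\|_{L^\infty(Q)}\le\|z\|_{\mathcal{Y}}$ via the embedding $\mathcal{Y}\hookrightarrow C(\bar Q)$. This establishes the first estimate with $C_{6a}=C$.

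For the Lipschitz estimate, I would set $z=\mathcal{Z}(u,\tau)\xi$, $\bar z=\mathcal{Z}(\bar u,\bar\tau)\xi$, and $\delta z=z-\bar z$. Subtracting the two copies of \eqref{eqLinearizedSystemGen} and reorganizing so that the operator evaluated at $(\bar u,\bar\tau)$ acts on $\delta z$, one checks (without any subtle step, only by adding and subtracting the missing cross terms) that $\delta z$ solves \eqref{eqLinearizedSystemGen} at $(\bar u,\bar\tau)$ with zero initial data and right-hand side
\[
\tilde\xi=\big(\dot{\pi}(\cdot,\tau)-\dot{\pi}(\cdot,\bar\tau)\big)\Delta z+\big(\dot{\pi}(\cdot,\tau)A-\dot{\pi}(\cdot,\bar\tau)\bar A\big)z,
\]
where $A=D_yF(\cdot,\pi(\cdot,\tau),y(\cdot),u(\cdot))$ and $\bar A=D_yF(\cdot,\pi(\cdot,\bar\tau),\bar y(\cdot),\bar u(\cdot))$. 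In other words $\delta z=\mathcal{Z}(\bar u,\bar\tau)\tilde\xi$, so the first estimate (applied at $(\bar u,\bar\tau)$) already gives $\|\delta z\|_{L^\infty(\Omega)^2\times\mathcal{Y}}\le C_{6a}\|\tilde\xi\|_{L^2(Q)}$, and everything reduces to bounding $\|\tilde\xi\|_{L^2(Q)}$.

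To estimate $\tilde\xi$ I would use three ingredients. First, the pointwise identity $|\dot{\pi}(t,\tau)-\dot{\pi}(t,\bar\tau)|=|\tau-\bar\tau|$ (valid on both $[0,1]$ and $[1,2]$) bounds the first term by $|\tau-\bar\tau|\,\|\Delta z\|_{L^2(Q)}\le|\tau-\bar\tau|\,C_{6a}\|\xi\|_{L^2(Q)}$, using part~(a). Second, splitting $\dot{\pi}(\cdot,\tau)A-\dot{\pi}(\cdot,\bar\tau)\bar A=\dot{\pi}(\cdot,\tau)(A-\bar A)+(\dot{\pi}(\cdot,\tau)-\dot{\pi}(\cdot,\bar\tau))\bar A$, the Lipschitz continuity of the Nemytskii operator $D_yF$ (Lemma~\ref{lemmaNemytskii}, or directly the estimates of Lemma~\ref{lemmaFisC2}) together with $\|\pi(\cdot,\tau)-\pi(\cdot,\bar\tau)\|_{L^\infty(0,2)}\le|\tau-\bar\tau|$ gives $\|A-\bar A\|_{L^\infty(Q)}\le C\big(\|y-\bar y\|_{L^\infty(Q)}+\|u-\bar u\|_{L^\infty(Q)}+|\tau-\bar\tau|\big)$. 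Third, the uniform bounds $\dot{\pi}(\cdot,\tau)\le c$ and $\|\bar A\|_{L^\infty(Q)}\le c$ close the remaining factors. Multiplying throughout by $\|z\|_{L^2(Q)}\le C_{6a}\|\xi\|_{L^2(Q)}$ and collecting terms yields $\|\tilde\xi\|_{L^2(Q)}\le C\big(\|y-\bar y\|_{L^\infty(Q)}+\|u-\bar u\|_{L^\infty(Q)}+|\tau-\bar\tau|\big)\|\xi\|_{L^2(Q)}$, which is the second estimate with a suitable $C_{6b}$.

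The main obstacle, and the reason the strong-solution space $\mathcal{Y}$ is indispensable here (compare the remark following Lemma~\ref{lemmaRegS}), is the term $(\dot{\pi}(\cdot,\tau)-\dot{\pi}(\cdot,\bar\tau))\Delta z$ appearing in $\tilde\xi$: it forces control of $\Delta z$ in $L^2(Q)$. This is exactly what the $\mathcal{Y}$-bound of part~(a) delivers. Had $z$ been only a weak solution in $W(0,2)$, one would have $\Delta z\in L^2((0,2),H^{-1}(\Omega))$ and $\tilde\xi$ would fail to lie in $L^2(Q)$, so the reduction $\delta z=\mathcal{Z}(\bar u,\bar\tau)\tilde\xi$ could not be combined with the regularity estimate and the argument would not close.
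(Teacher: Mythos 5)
Your proposal is correct and follows essentially the same route as the paper: both bound $\mathcal{Z}(u,\tau)\xi$ uniformly on $\mathcal{W}$ via Lemma~\ref{lemmaRegularityLinParabolic}, then write $\delta z = z - \bar z$ as a solution of the linearized equation at $(\bar u,\bar\tau)$ with a perturbation right-hand side, and estimate that right-hand side using $|\dot{\pi}(t,\tau)-\dot{\pi}(t,\bar\tau)| = |\tau-\bar\tau|$, the $\mathcal{Y}$-bound on $\Delta z$, and the Lipschitz estimate for $D_y\tilde{F}$ from Lemma~\ref{lemmaFisC2}, before applying the regularity lemma once more. In fact your write-up is slightly more explicit than the paper's (whose displayed bound on the perturbation term compresses the Lipschitz factor into the constants), and your closing observation about why the strong-solution space $\mathcal{Y}$ is indispensable for controlling $\Delta z$ in $L^2(Q)$ matches the remark the paper makes after Lemma~\ref{lemmaRegS}.
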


\begin{proof}
The first estimate is a direct consequence of Lemma~\ref{lemmaRegularityLinParabolic}.
Next, let $(u,\tau) \in \mathcal{W}$ and $\xi \in L^2(Q)$, define $z= \mathcal{Z}(u,\tau)\xi$, $\bar{z}= \mathcal{Z}(\bar{u},\bar{\tau})\xi$, and set $\delta z= z-\bar{z}$. The function $\delta z$ is solution to the following linear parabolic equation:
\begin{eqnarray*}
\begin{cases}
\begin{array}{rcll}
\delta \dot{z}(t)-\dot{\pi}(t,\bar{\tau}) \big[ \Delta (\delta z)(t) + D_y F \big(t, \pi(t,\bar{\tau}), \bar{y}(t), \bar{u}(t) \big) \delta z(t) \big]&=& \zeta(t)  & \text{ in $Q$}, \\
\delta z&=& 0 & \text{ on $\Sigma$}, \\
\delta z(0)&=& 0 & \text{ in $\Omega$},
\end{array}
\end{cases}
\end{eqnarray*}
where
\begin{eqnarray*}
\zeta(t)&= & \big[ \dot{\pi}(t,\tau)-\dot{\pi}(t,\bar{\tau}) \big] \Delta z(t) \\
& & + \big[ \pi(t,\tau) D_yF \big( t,\pi(t,\tau),y(t),u(t) \big)- \pi(t,\bar{\tau}) D_yF \big( t,\pi(t,\bar{\tau}),\bar{y}(t),\bar{u} \big) \big] z(t).
\end{eqnarray*}
It holds
\begin{eqnarray*}
\| \zeta \|_{L^2(Q)} \leq \varepsilon_0 \| \Delta z \|_{L^2(Q)} + C_{4b} \| z \|_{L^2(Q)}
\leq (\varepsilon_0 + C_{4b}) C_{6a} \| \xi \|_{L^2(Q)}.
\end{eqnarray*}
The second estimate follows, taking $C_{6b}= (\varepsilon_0 + C_{4b}) C_{6a}^2$ and using once again Lemma~\ref{lemmaRegularityLinParabolic}.
\end{proof}

\begin{lemma} \label{lemmaSisC2}
There exist constants $C_{7a}>0$, $C_{7b}>0$, $C_{8a}>0$ and $C_{8b}>0$ such that for all $(u,\tau) \in \mathcal{W}$, for all $(v,\theta)$ and $(\hat{v},\hat{\theta}) \in L^\infty(Q) \times \R$,
\begin{eqnarray*}
 \big\| DS(u,\tau)(v,\theta) \big\|_{\mathcal{Y}} & \leq & C_{7a} \| (v,\theta) \|_{L^2(Q) \times \R}, \\
 \| S(u,\tau)-S(\bar{u},\bar{\tau}) \|_{\mathcal{Y}} & \leq & \sqrt{2|\Omega|} C_{7a} \big( \| u- \bar{u} \|_{L^\infty(Q)} + |\tau- \bar{\tau}| \big), \\
 \big\| \big( DS(u,\tau)-DS(\bar{u},\bar{\tau}) \big) (v,\theta) \big\|_{\mathcal{Y}} 
 & \leq & C_{7b} \big(\| u- \bar{u} \|_{L^\infty(Q)} + |\tau-\bar{\tau}| \big) \| (v,\theta) \|_{L^2(Q) \times \R} \\
 \big\| D^2S(u,\tau)\big((v,\theta),(\hat{v},\hat{\theta}) \big) \big\|_{\mathcal{Y}} 
 & \leq & C_{8a} \| (v,\theta) \|_{L^2(Q) \times \R} \| (\hat{v},\hat{\theta}) \|_{L^2(Q) \times \R}, \\
 \big\| \big( D^2S(u,\tau)-D^2S(\bar{u},\bar{\tau}) \big) \big((v,\theta),(\hat{v},\hat{\theta}) \big) \big\|_{\mathcal{Y}}
&  \leq & C_{8b}
\| (v,\theta) \|_{L^2(Q) \times \R}  \| (\hat{v},\hat{\theta}) \|_{L^2(Q) \times \R} \\
& & \times \big( \| u- \bar{u} \|_{L^\infty(Q)} + |\tau-\bar{\tau}| \big).
\end{eqnarray*}
\end{lemma}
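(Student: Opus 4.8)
The plan is to represent both $DS(u,\tau)$ and $D^2S(u,\tau)$ as the linear solution operator $\mathcal{Z}(u,\tau)$ of~\eqref{eqLinearizedSystemGen} applied to explicit right-hand sides, and then to reduce every estimate to an $L^2(Q)$-bound on those right-hand sides by combining Lemma~\ref{lemmaKisC2} (estimates on $\mathcal{Z}$) with Lemma~\ref{lemmaFisC2} (estimates on $\tilde{F}$). Writing the state equation as $\dot{y}-\dot{\pi}(\cdot,\tau)\Delta y-\tilde{F}(y,u,\tau)=0$ with $y=S(u,\tau)$ and differentiating once in a direction $(v,\theta)$, I find that $z:=DS(u,\tau)(v,\theta)=\mathcal{Z}(u,\tau)\xi$ with
\[
\xi = \theta\,\dot{\pi}_\tau(\cdot)\Delta y + D_{(u,\tau)}\tilde{F}(y,u,\tau)(v,\theta).
\]
Differentiating a second time in a direction $(\hat{v},\hat{\theta})$ and using that $\dot{\pi}$ is affine in $\tau$ (so that $\dot{\pi}_{\tau\tau}=0$ and, crucially, $\dot{\pi}_\tau$ is independent of $\tau$), I obtain $w:=D^2S(u,\tau)\big((v,\theta),(\hat{v},\hat{\theta})\big)=\mathcal{Z}(u,\tau)\eta$ with, writing $\hat{z}=DS(u,\tau)(\hat{v},\hat{\theta})$,
\[
\eta = \hat{\theta}\,\dot{\pi}_\tau(\cdot)\Delta z + \theta\,\dot{\pi}_\tau(\cdot)\Delta\hat{z} + D^2\tilde{F}(y,u,\tau)\big((z,v,\theta),(\hat{z},\hat{v},\hat{\theta})\big).
\]
Throughout I would use the embeddings $L^\infty(Q)\hookrightarrow L^2(Q)$, $\mathcal{Y}\hookrightarrow L^\infty(Q)$, and the uniform bound $\|y\|_{\mathcal{Y}}\le c'$ valid on $\mathcal{W}$.

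I would then prove the five estimates in order, each feeding the next. For the first estimate I bound $\|\xi\|_{L^2(Q)}$ using $\|\Delta y\|_{L^2(Q)}\le\|y\|_{\mathcal{Y}}$ and the first bound of Lemma~\ref{lemmaFisC2} (with null $z$-component), then apply $\|z\|_{\mathcal{Y}}\le C_{6a}\|\xi\|_{L^2(Q)}$ from Lemma~\ref{lemmaKisC2}. The second estimate follows from the first by the mean value inequality along the segment joining $(\bar{u},\bar{\tau})$ to $(u,\tau)$, which stays in the convex set $\mathcal{W}$; the factor $\sqrt{2|\Omega|}$ arises from $\|u-\bar{u}\|_{L^2(Q)}\le\sqrt{2|\Omega|}\,\|u-\bar{u}\|_{L^\infty(Q)}$ together with $\sqrt{a^2+b^2}\le a+b$. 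This second estimate is the pivot of the whole argument, since it converts the $L^\infty$-distance $\|u-\bar{u}\|_{L^\infty(Q)}+|\tau-\bar{\tau}|$ into control of $\|y-\bar{y}\|_{L^\infty(Q)}$, which is precisely the quantity that the Lipschitz bounds of Lemmas~\ref{lemmaFisC2} and~\ref{lemmaKisC2} require.

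For the third estimate I split $z-\bar{z}=\big(\mathcal{Z}(u,\tau)-\mathcal{Z}(\bar{u},\bar{\tau})\big)\xi+\mathcal{Z}(\bar{u},\bar{\tau})(\xi-\bar{\xi})$, estimating the first term by the second bound of Lemma~\ref{lemmaKisC2} and the second term by $C_{6a}\|\xi-\bar{\xi}\|_{L^2(Q)}$, where $\|\xi-\bar{\xi}\|_{L^2(Q)}$ is controlled by the second bound of Lemma~\ref{lemmaFisC2} (for the $\tilde{F}$ part) and by $\|y-\bar{y}\|_{\mathcal{Y}}$ (for the $\Delta(y-\bar{y})$ part); both invoke the already-proven second estimate to bound $\|y-\bar{y}\|_{L^\infty(Q)}$. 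The fourth estimate is obtained exactly like the first, bounding $\|\eta\|_{L^2(Q)}$ via the first estimate (to control $\|z\|_{\mathcal{Y}},\|\hat{z}\|_{\mathcal{Y}}$) and the third bound of Lemma~\ref{lemmaFisC2}.

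The fifth estimate is the main obstacle and demands the most care. Splitting $w-\bar{w}=\big(\mathcal{Z}(u,\tau)-\mathcal{Z}(\bar{u},\bar{\tau})\big)\eta+\mathcal{Z}(\bar{u},\bar{\tau})(\eta-\bar{\eta})$, the first term is handled as in the third estimate; the difficulty is $\|\eta-\bar{\eta}\|_{L^2(Q)}$. Because $\dot{\pi}_\tau$ does not depend on $\tau$, the two Laplacian terms contribute only $\Delta(z-\bar{z})$ and $\Delta(\hat{z}-\bar{\hat{z}})$, controlled by the third estimate of the present lemma. The genuinely delicate part is the difference of the two $D^2\tilde{F}$ terms, which I would treat by a telescoping argument: changing the base point $(\bar{y},\bar{u},\bar{\tau})\mapsto(y,u,\tau)$ (absorbed by the Lipschitz bound of $D^2\tilde{F}$, the fourth estimate of Lemma~\ref{lemmaFisC2}), and changing $\bar{z}\mapsto z$ and $\bar{\hat{z}}\mapsto\hat{z}$ in the two arguments one at a time (absorbed by bilinearity and the boundedness bound of $D^2\tilde{F}$, together with the third estimate of the present lemma bounding $\|z-\bar{z}\|_{\mathcal{Y}}$ and $\|\hat{z}-\bar{\hat{z}}\|_{\mathcal{Y}}$). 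Collecting the contributions and defining $C_{8b}$ as the resulting sum of products of the earlier constants concludes the proof; the bookkeeping of these constants, rather than any conceptual issue, is where the work lies.
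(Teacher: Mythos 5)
Your proposal is correct and follows essentially the same route as the paper's proof: it represents $DS(u,\tau)$ and $D^2S(u,\tau)$ as $\mathcal{Z}(u,\tau)$ applied to explicit right-hand sides, bounds those right-hand sides via Lemma~\ref{lemmaFisC2}, applies Lemma~\ref{lemmaKisC2}, obtains the second estimate by the mean value argument along the segment (with the same $\sqrt{2|\Omega|}$ factor), and derives the Lipschitz bounds by the same splitting/telescoping, collecting constants at the end. If anything, your right-hand sides are more carefully written than the paper's displayed formulas: you retain the terms $\theta\,\dot{\pi}_\tau\Delta y$ and $\hat{\theta}\,\dot{\pi}_\tau\Delta z+\theta\,\dot{\pi}_\tau\Delta\hat{z}$, which the paper's proof omits from its expressions for $DS$ and $D^2S$ (inconsistently with its own earlier formula for $D_\tau\mathbf{S}(\bar{u},\bar{\tau})$ and the remark following Lemma~\ref{lemmaRegS}); this only modifies the explicit constants, using the uniform $\mathcal{Y}$-bound on $S(u,\tau)$ over $\mathcal{W}$, which you correctly invoke and which follows from the $L^\infty$-bound defining $\mathcal{W}$ together with Lemma~\ref{lemmaRegularityLinParabolic}(c).
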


\begin{proof}
The first- and second-order derivatives of $S$ are given by:
\begin{flushleft}
$DS(u,\tau)(v,\theta)= \mathcal{Z}(u,\tau)\big[ D_u \tilde{F}(S(u,\tau),u,\tau) v + D_\tau \tilde{F}(S(u,\tau),u,\tau) \theta \big],$ \\
$D^2S(u,\tau)\big( (v,\theta),(\hat{v},\hat{\theta}) \big) = \mathcal{Z}(u,\tau)
\big[ D^2 \tilde{F}(S(u,\tau),u,\tau)\big( (DS(u,\tau)(v,\theta),v,\theta),(DS(u,\tau)(\hat{v},\hat{\theta}),\hat{v},\hat{\theta}) \big) \big].$
\end{flushleft}
The first estimate follows directly from Lemma~\ref{lemmaFisC2} and Lemma~\ref{lemmaKisC2}, taking $C_{7a}= C_{6a}C_{4a}$. For all $(u,\tau) \in \mathcal{W}$,
\begin{eqnarray*}
S(u,\tau)-S(\bar{u},\bar{\tau}) & = & \int_0^1 DS(\bar{u} + \theta (u-\bar{u}), \bar{\tau} + \theta (\tau-\bar{\tau})) \dd \theta,
\end{eqnarray*}
and therefore,
\begin{eqnarray*}
\| S(u,\tau)- S(\bar{u},\bar{\tau}) \|_{\mathcal{Y}}
\leq C_{7a} \| (u-\bar{u}, \tau-\bar{\tau}) \|_{L^2(Q) \times \R}
\leq \sqrt{2|\Omega|} C_{7a} \big( \| u -\bar{u} \|_{L^\infty(Q)} + |\tau-\bar{\tau}| \big).
\end{eqnarray*}
The second estimate follows. The three other estimates are easily obtained with:
\begin{eqnarray*}
& & C_{7b} =  C_{6a} C_{4b} \sqrt{1+ C_{7a}^2} + C_{6b}C_{4a}, \qquad
C_{8a} =   C_{6a} C_{5a} (1+ C_{7a}^2), \\
& & C_{8b} =  2 C_{6a} C_{5a} C_{7b} \sqrt{1 + C_{7a}^2}
+ C_{6a} C_{5b} (1+ C_{7a}^2) + C_{6b} C_{5a} (1+ C_{7a}^2),
\end{eqnarray*}
and thus the proof is complete.
\end{proof}

\section{Optimality conditions} \label{sectionOptiCond}

\subsection{First-order optimality conditions}

We define:
\begin{eqnarray*}
h \colon (s,t,x,y,u,p) \in (0,2) \times (0,T) \times \R^n \times \R^3
\mapsto
\begin{cases}
\begin{array}{ll}
\ell_1(t,x,y,u)+ p f_1(t,x,y,u,p) & \text{ if $s \in (0,1)$}, \\
\ell_2(t,x,y,u)+ p f_2(t,x,y,u,p) & \text{ if $s \in (1,2)$}.
\end{array}
\end{cases}
\end{eqnarray*}

\begin{theorem} \label{theoremCN1}
Assume that $(\bar{u},\bar{\tau})$ is a solution to~\eqref{problem1}. Then, for all $u \in \mathcal{U}_{\text{ad}}$,
\begin{eqnarray} \label{eqNecCond1a}
D_u J(\bar{u},\bar{\tau})(u-\bar{u}) & \geq & 0.
\end{eqnarray}
Moreover,
\begin{eqnarray} \label{eqNecCond1b}
D_\tau J(\bar{u},\bar{\tau}) & = & 0.
\end{eqnarray}
Therefore, for a.\,e.\@ $(t,x) \in Q$,
\begin{eqnarray} \label{eqPontryaginWeak}
\begin{array} {rcl}
\bar{u}(t,x) \in (u^-(t,x), u^+(t,x)) & \Rightarrow &  D_u h \big( t,\pi(t,\bar{\tau}),x,\bar{y}(t,x),\bar{u}(t,x),\bar{p}(t,x) \big) = 0, \\
\bar{u}(t,x) = u^-(t,x) & \Rightarrow & D_u h \big( t,\pi(t,\bar{\tau}),x,\bar{y}(t,x),\bar{u}(t,x),\bar{p}(t,x) \big) \geq 0, \\
\bar{u}(t,x)= u^+(t,x) & \Rightarrow & D_u h \big( t,\pi(t,\bar{\tau}),x,\bar{y}(t,x),\bar{u}(t,x),\bar{p}(t,x) \big) \leq 0.
\end{array}
\end{eqnarray}
\end{theorem}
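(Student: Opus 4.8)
The plan is to establish the three assertions successively: the abstract variational inequality \eqref{eqNecCond1a} and the stationarity \eqref{eqNecCond1b} follow from the differentiability of $J$ (Proposition~\ref{le:derredcost}) combined with the geometry of the feasible set, while the pointwise relations \eqref{eqPontryaginWeak} are then extracted from \eqref{eqNecCond1a} by a localization argument.

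First I would exploit that $\mathcal{U}_{\text{ad}}$ is convex, being defined by the decoupled box constraints $u^-(t,\cdot) \leq u(t,\cdot) \leq u^+(t,\cdot)$. Thus, for any $u \in \mathcal{U}_{\text{ad}}$, the segment $\bar{u} + t(u-\bar{u})$, $t \in [0,1]$, stays in $\mathcal{U}_{\text{ad}}$. Since $(\bar{u},\bar{\tau})$ minimizes $J$ and $J$ is differentiable in $u$, the scalar map $t \mapsto J(\bar{u}+t(u-\bar{u}),\bar{\tau})$ attains its minimum on $[0,1]$ at $t=0$, so its right derivative there is nonnegative, which is \eqref{eqNecCond1a}. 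For \eqref{eqNecCond1b}, the variable $\tau$ is unconstrained in the open interval $(0,T)$, so $\bar{\tau}$ is an interior minimizer of the differentiable map $\tau \mapsto J(\bar{u},\tau)$, and its derivative must vanish.

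Next I would make \eqref{eqNecCond1a} explicit. By Proposition~\ref{le:derredcost} and the formula for $D_u\mathcal{L}$ in Lemma~\ref{lemmaTwiceDiffLag}, \eqref{eqNecCond1a} is equivalent to
\begin{eqnarray*}
\int_0^2 \dot{\pi}(t,\bar{\tau})\, D_u H[t]\,(u-\bar{u})(t) \dd t \geq 0, \quad \text{for all } u \in \mathcal{U}_{\text{ad}}.
\end{eqnarray*}
Unwinding the definitions of $H_i$ and $h$, the derivative $D_uH[t]$ acts as $D_u H[t]\, v = \int_{\Omega} D_u h\big(t,\pi(t,\bar{\tau}),x,\bar{y}(t,x),\bar{u}(t,x),\bar{p}(t,x)\big)\,v(t,x) \dd x$, the first slot of $h$ (the phase selector) being filled by the new time $t$ and the second by the physical time $\pi(t,\bar{\tau})$. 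Writing $g(t,x) = \dot{\pi}(t,\bar{\tau})\,D_u h\big(t,\pi(t,\bar{\tau}),x,\bar{y}(t,x),\bar{u}(t,x),\bar{p}(t,x)\big)$, the inequality then reads $\int_{Q} g\,(u-\bar{u}) \dd x \dd t \geq 0$ for all admissible $u$.

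The hard part will be the passage from this integral inequality to the pointwise conditions, that is, proving that for a.e.\@ $(t,x) \in Q$ one has $g(t,x)\,(w-\bar{u}(t,x)) \geq 0$ for every $w \in [u^-(t,x),u^+(t,x)]$. Because the box constraints decouple across $(t,x)$, admissible competitors are obtained by modifying $\bar{u}$ arbitrarily within the bounds on measurable subsets of $Q$; testing with controls that equal $\bar{u}$ off a small measurable set $A$ and take a chosen value $w$ on $A$, and invoking a Lebesgue point argument, yields the claimed pointwise inequality. Finally, since $\dot{\pi}(t,\bar{\tau}) \geq \min(\bar{\tau},T-\bar{\tau})>0$, the multiplier $g$ has the sign of $D_u h$; distinguishing whether $\bar{u}(t,x)$ lies in the open interval $(u^-(t,x),u^+(t,x))$ or equals one of the endpoints then produces exactly the three cases in \eqref{eqPontryaginWeak}.
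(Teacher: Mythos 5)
Your proposal is correct and follows essentially the same route as the paper's proof, which derives \eqref{eqNecCond1a} from the convexity of $\mathcal{U}_{\text{ad}}$ and the differentiability of $J$, obtains \eqref{eqNecCond1b} from the interiority of $\bar{\tau}$ in $(0,T)$, and deduces \eqref{eqPontryaginWeak} via a Lebesgue point argument (the paper simply cites \cite[Theorem 5.12 and Conclusion]{Tro10} for this last step, which you spell out, including the explicit representation of $D_uJ$ through $D_uH[t]$ and the positivity of $\dot{\pi}(\cdot,\bar{\tau})$).
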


\begin{proof}
Inequality~\eqref{eqNecCond1a} follows directly from the convexity of $\mathcal{U}_{\text{ad}}$ and the differentiability of $J$. The equality and the inequalities given in~\eqref{eqPontryaginWeak} follow from the optimality of $\bar{u}$ and a Lebesgue point argument (see for instance~\cite[Theorem 5.12 and Conclusion]{Tro10}).
\end{proof}

\subsection{Second-order optimality conditions}

In this subsection, $(\bar{u},\bar{\tau})$ is assumed to satisfy the first-order optimality conditions~\eqref{eqNecCond1a} and~\eqref{eqNecCond1b}. For the formulation of the second-order order optimality conditions and, more precisely, for the definition of the critical cone, we introduce the following subset of $Q$, parameterized by $\delta \geq 0$:
\begin{eqnarray*}
A_{\delta}(\bar{u},\bar{\tau}) & = & \big\{ (t,x) \in \Omega \,|\, \big|D_uh \big( t,\pi(t,\bar{\tau}),x,\bar{y}(t,x),\bar{u}(t,x),\bar{p}(t,x) \big) \big| > \delta \big\}.
\end{eqnarray*}
Observe that by~\eqref{eqPontryaginWeak}, for all $\delta \geq 0$, for a.\,e.\@ $(t,x) \in Q$,
\begin{eqnarray*}
(t,x) \in A_{\delta}(\bar{u},\bar{\tau}) & \Rightarrow & 
\bar{u}(t,x) \in \{ u^-(t,x), u^+(t,x) \}.
\end{eqnarray*}
We define the tangential set to $\mathcal{U}_{\text{ad}}$ at $\bar{u}$:
\begin{eqnarray*}
T_{\mathcal{U}_{\text{ad}}}(\bar{u}) & = &
\big\{v \in L^\infty(Q) \,|\, 
\bar{u}(t,x)= u^-(t,x) \Rightarrow v(t,x) \geq 0, \\
& & \qquad \qquad \qquad \,
\bar{u}(t,x)= u^+(t,x) \Rightarrow v(t,x) \leq 0, \text{ for a.\,e.\@ $(t,x) \in Q$}
\big\}.
\end{eqnarray*}
We now define, for all $\delta \geq 0$, the set
\begin{eqnarray*}
C_\delta(\bar{u},\bar{\tau}) & = & \big\{ v \in T_{\mathcal{U}_{\text{ad}}}(\bar{u}) \,|\,
(t,x) \in A_\delta(\bar{u},\bar{\tau}) \Rightarrow v(t,x)= 0,
\text{ for a.\,e.\@ $(t,x) \in Q$} \big\}.
\end{eqnarray*}
The set $C_0(\bar{u},\bar{\tau})$ is called critical cone.
Observe that for all $v \in C_0(\bar{u},\bar{\tau})$,
\begin{eqnarray*}
D_uJ(\bar{u},\bar{\tau})v & = & \int_{Q\backslash A_0(\bar{u},\bar{\tau})} \dot{\pi}(t,\bar{\tau}) D_u h \big( t,\pi(\bar{\tau},t),\bar{y}(t,x),\bar{u}(t,x),\bar{p}(t,x) \big) v(t,x) \dd x \dd t = 0.
\end{eqnarray*}
Note also that for all $0 \leq \delta_1 \leq \delta_2$, $A_{\delta_2}(\bar{u},\bar{\tau}) \subseteq A_{\delta_1}(\bar{u},\bar{\tau})$, and thus $C_{\delta_1}(\bar{u},\bar{\tau}) \subseteq C_{\delta_2}(\bar{u},\bar{\tau})$.

\begin{theorem} \label{theoremCN2}
Assume that $(\bar{u},\bar{\tau})$ is a solution to~\eqref{problem1}. Then, for all $(v,\theta) \in C_0(\bar{u},\bar{\tau}) \times \R$,
\begin{eqnarray*}
D^2 J(\bar{u},\bar{\tau})(v,\theta)^2 & \geq & 0.
\end{eqnarray*}
\end{theorem}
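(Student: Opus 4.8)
The plan is to deduce the inequality from the local optimality of $(\bar u,\bar\tau)$ through a second-order Taylor expansion of $J$ along a suitable family of admissible perturbations, the limit passage being made possible by the $L^2$-regularity of the Hessian established in Proposition~\ref{propositionContinuityL2}. First I would fix $(v,\theta) \in C_0(\bar u,\bar\tau) \times \R$ and, for $\rho > 0$, construct admissible perturbations by projection: letting $P$ denote the pointwise projection onto the interval $[u^-(\cdot,\cdot),u^+(\cdot,\cdot)]$, set $u_\rho = P(\bar u + \rho v) \in \mathcal{U}_{\text{ad}}$ and $w_\rho = u_\rho - \bar u$. Since $\bar\tau$ is interior to $(0,T)$, the pair $(u_\rho, \bar\tau + \rho\theta)$ is feasible for~\eqref{problem1} for all $\rho$ small enough, so optimality gives $J(u_\rho, \bar\tau + \rho\theta) \ge J(\bar u, \bar\tau)$. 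The three properties I would establish are: (i) $\| w_\rho \|_{L^\infty(Q)} \le \rho \|v\|_{L^\infty(Q)}$, by nonexpansiveness of $P$; (ii) $w_\rho / \rho \to v$ in $L^2(Q)$, obtained from pointwise a.e.\ convergence (using $v \ge 0$ on $\{\bar u = u^-\}$, $v \le 0$ on $\{\bar u = u^+\}$, and $v = 0$ on $A_0(\bar u,\bar\tau)$) together with the domination $|w_\rho|/\rho \le |v| \in L^2(Q)$ and dominated convergence; and (iii) $w_\rho \in C_0(\bar u,\bar\tau)$, so that $D_u J(\bar u,\bar\tau) w_\rho = 0$ by the observation preceding the theorem.

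Next, I would write Taylor's formula with integral remainder for the $C^2$ map $J$ (Proposition~\ref{le:derredcost}):
\[
J(u_\rho, \bar\tau + \rho\theta) - J(\bar u, \bar\tau)
= DJ(\bar u,\bar\tau)(w_\rho, \rho\theta)
+ \int_0^1 (1-s)\, D^2 J\big(\bar u + s w_\rho, \bar\tau + s\rho\theta\big)(w_\rho, \rho\theta)^2 \dd s .
\]
The first-order term vanishes: $D_u J(\bar u,\bar\tau) w_\rho = 0$ by (iii) and $D_\tau J(\bar u,\bar\tau) = 0$ by~\eqref{eqNecCond1b}. Dividing the resulting inequality by $\rho^2$ gives
\[
0 \le \int_0^1 (1-s)\, D^2 J\big(\bar u + s w_\rho, \bar\tau + s\rho\theta\big)\Big( \tfrac{w_\rho}{\rho}, \theta \Big)^2 \dd s ,
\]
and it only remains to pass to the limit $\rho \to 0$.

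For the limit, I would note that by convexity of $\mathcal{U}_{\text{ad}}$ and (i) the intermediate points $(\bar u + s w_\rho, \bar\tau + s\rho\theta)$ belong to $\mathcal W$ for small $\rho$, uniformly in $s \in [0,1]$. Proposition~\ref{propositionContinuityL2} then provides both the uniform bound $C_{1a}$ and the Lipschitz estimate $C_{1b}$ for the bilinear forms $D^2 J$ on $L^2(Q) \times \R$; combined with (i), (ii), and the boundedness of $\|(w_\rho/\rho, \theta)\|_{L^2(Q) \times \R}$, this shows that the integrand converges, uniformly in $s$, to $D^2 J(\bar u,\bar\tau)(v,\theta)^2$. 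Passing to the limit and using $\int_0^1 (1-s)\dd s = \tfrac12$ then yields $0 \le \tfrac12 D^2 J(\bar u,\bar\tau)(v,\theta)^2$, which is the claim.

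The crux is the two-norm discrepancy: the difference quotient $w_\rho/\rho$ converges to $v$ only in $L^2(Q)$, and not in $L^\infty(Q)$, on the weakly active set $\{\bar u \in \{u^-,u^+\}\} \setminus A_0(\bar u,\bar\tau)$, so a naive $L^\infty$ remainder estimate is unavailable. It is precisely the extension of $D^2 J$ to a continuous, Lipschitz bilinear form on $L^2(Q) \times \R$ furnished by Proposition~\ref{propositionContinuityL2} that legitimizes the limit passage, and this is the step requiring the most care.
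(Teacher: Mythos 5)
Your proof is correct and takes essentially the same approach as the paper: your projected perturbation $u_\rho = P(\bar u + \rho v)$ is exactly the paper's $\bar u + \varepsilon_k v_k$, the first-order term is eliminated the same way (membership of the difference quotient in $C_0(\bar u,\bar\tau)$ together with $D_\tau J(\bar u,\bar\tau)=0$), and the limit passage rests on Proposition~\ref{propositionContinuityL2} just as in the paper. The only difference is cosmetic: you use the integral form of the Taylor remainder, while the paper uses the Lagrange mean-value form with an intermediate point $\mu_k \in [0,1]$.
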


\begin{proof}
Let $(v,\theta) \in C_0(\bar{u},\bar{\tau}) \times \R$. Let $(\varepsilon_k)_{k \in \mathbb{N}}$ be a sequence of positive real numbers converging to 0. For all $k \in \mathbb{N}$, define for a.\,e.\@ $(t,x) \in Q$
\begin{eqnarray*}
v_k(t,x) & = & \frac{1}{\varepsilon_k} \Big( P_{[u^-(t,x),u^+(t,x)]} \big( \bar{u}(t,x)+ \varepsilon_k  v(t,x) \big)-\bar{u}(t,x) \Big),
\end{eqnarray*}
where $P_{[a,b]}$ denotes the projection on the interval $[a,b]$. Since $v \in T_{\mathcal{U}_{\text{ad}}}(\bar{u})$, for a.\,e.\@ $(t,x) \in Q$, there exists $\bar{k}$ such that for all $k \geq \bar{k}$, $v_k(t,x)= v(t,x)$. Moreover, for all $k$, for a.\,e.\@ $(t,x) \in Q$, $|v_k(t,x)| \leq |v(t,x)|$, therefore, by the dominated convergence theorem, $\| v-v_k \|_{L^2(Q)} \rightarrow 0$.
It can be easily checked that for all $k$, $v_k \in C_0(\bar{u},\bar{\tau})$ and that $\bar{u}+ \varepsilon_k v_k \in \mathcal{U}_{\text{ad}}$. By optimality of $(\bar{u},\bar{\tau})$ and by Taylor's theorem, there exists for all $k \in \mathbb{N}$ a real number $\mu_k \in [0,1]$ such that:
\begin{eqnarray*}
0 & \leq & \frac{1}{\varepsilon_k^2} \Big( J(\bar{u}+\varepsilon_k v_k,\bar{\tau}+ \varepsilon_k \theta)-J(\bar{u},\bar{\tau}) \Big)
= D^2J(\bar{u} + \mu_k \varepsilon_k v_k, \bar{\tau} + \mu_k \varepsilon_k \theta)(v_k,\theta)^2.
\end{eqnarray*}
Using Proposition~\ref{propositionContinuityL2}, we obtain
\begin{eqnarray*}
0 & \leq & \lim_{k \to \infty} D^2J(\bar{u} + \mu_k \varepsilon_k v_k, \bar{\tau} + \mu_k \varepsilon_k \theta)(v_k,\theta)^2
= D^2J(\bar{u},\bar{\tau})(v,\theta)^2,
\end{eqnarray*}
which concludes the proof.
\end{proof}

\begin{theorem} \label{theoremCS2}
Assume that the first-order necessary conditions~\eqref{eqNecCond1a} and~\eqref{eqNecCond1b} are satisfied. Assume that there exist $\delta >0$ and $\alpha > 0$ such that:
\begin{eqnarray*}
D^2 J(\bar{u},\bar{\tau})(v,\theta)^2  \geq  
\alpha \big( \| v \|_{L^2(Q)}^2 + \theta^2 \big), & &
\forall (v,\theta) \in C_\delta(\bar{u},\bar{\tau}) \times \R.
\end{eqnarray*}
Then, for all $\beta \in (0,\alpha)$, there exists $\varepsilon > 0$ such that for all $(u,\tau) \in \mathcal{U}_{\text{ad}} \times (0,T)$,
\begin{eqnarray*}
\| u - \bar{u} \|_{L^\infty(Q)} + \ |\tau-\bar{\tau}| \leq \varepsilon
& \Rightarrow &
J(u,\tau) \geq J(\bar{u},\bar{\tau}) + \frac{\beta}{2} \big( \| u-\bar{u} \|_{L^2(Q)} + (\tau-\bar{\tau}) \big)^2.
\end{eqnarray*}
\end{theorem}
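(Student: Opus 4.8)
The plan is to combine a second-order Taylor expansion of $J$ along the segment joining $(\bar u,\bar\tau)$ to $(u,\tau)$ with the coercivity hypothesis, the main difficulty being that the tested direction need not lie in the critical cone. Fix $\beta\in(0,\alpha)$, let $\varepsilon\in(0,\varepsilon_0]$ be chosen later, and take $(u,\tau)\in\mathcal{U}_{\text{ad}}\times(0,T)$ with $\|u-\bar u\|_{L^\infty(Q)}+|\tau-\bar\tau|\le\varepsilon$. Writing $v=u-\bar u$ and $\theta=\tau-\bar\tau$, we have $v\in T_{\mathcal{U}_{\text{ad}}}(\bar u)$ (both $u,\bar u\in\mathcal{U}_{\text{ad}}$) and, by convexity, the whole segment lies in $\mathcal W$. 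Taylor's theorem then gives $\mu\in[0,1]$ with
\[
J(u,\tau)-J(\bar u,\bar\tau)=D_uJ(\bar u,\bar\tau)v+D_\tau J(\bar u,\bar\tau)\theta+\tfrac12\,D^2J(\bar u+\mu v,\bar\tau+\mu\theta)(v,\theta)^2.
\]
By~\eqref{eqNecCond1b} the $\theta$-term vanishes, and by~\eqref{eqNecCond1a} the quantity $D:=D_uJ(\bar u,\bar\tau)v$ is nonnegative; crucially, this first-order remainder will be kept rather than discarded.

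First I would remove the two-norm discrepancy: by Proposition~\ref{propositionContinuityL2}, $D^2J(\bar u+\mu v,\bar\tau+\mu\theta)$ and $D^2J(\bar u,\bar\tau)$, evaluated at $(v,\theta)$, differ by at most $C_{1b}\varepsilon\big(\|v\|_{L^2(Q)}^2+\theta^2\big)$, and $D^2J(\bar u,\bar\tau)$ extends to a bounded bilinear form on $L^2(Q)\times\R$ with constant $C_{1a}$. It then remains to bound $D^2J(\bar u,\bar\tau)(v,\theta)^2$ from below. Here is the main step: decompose $v=v_A+v_I$, where $v_A=v\,\mathds 1_{A_\delta(\bar u,\bar\tau)}$ and $v_I=v-v_A$. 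One checks that $v_I\in C_\delta(\bar u,\bar\tau)$, so the coercivity hypothesis applies to $(v_I,\theta)$, while expanding the bilinear form and using $C_{1a}$ on the cross and quadratic terms in $v_A$ gives
\[
D^2J(\bar u,\bar\tau)(v,\theta)^2\ \ge\ \alpha\big(\|v_I\|_{L^2(Q)}^2+\theta^2\big)-2C_{1a}\,\|(v_I,\theta)\|_{L^2(Q)\times\R}\,\|v_A\|_{L^2(Q)}-C_{1a}\|v_A\|_{L^2(Q)}^2.
\]

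The crux is to control the active part $v_A$ by the first-order remainder $D$. On $A_\delta(\bar u,\bar\tau)$ one has $|D_uh|>\delta$, and the sign conditions in~\eqref{eqPontryaginWeak} together with $u\in\mathcal{U}_{\text{ad}}$ force the integrand $\dot\pi(\cdot,\bar\tau)\,D_uh\,v$ to be $\ge\frac{\delta}{c}|v|$ there (and pointwise nonnegative elsewhere); integrating yields $D\ge\frac{\delta}{c}\|v_A\|_{L^1(Q)}$, whence
\[
\|v_A\|_{L^2(Q)}^2\ \le\ \|v\|_{L^\infty(Q)}\,\|v_A\|_{L^1(Q)}\ \le\ \frac{c\,\varepsilon}{\delta}\,D.
\]
Inserting this, applying Young's inequality to the cross term, and substituting back into the Taylor expansion, the terms proportional to $\|v_A\|_{L^2(Q)}^2$ and the $C_{1b}\varepsilon$-error are absorbed: the former by the positive remainder $D$ (whose coefficient $\tfrac{\delta}{c\varepsilon}$ diverges as $\varepsilon\to0$), the latter by the coercive term $\tfrac\alpha2\big(\|v_I\|_{L^2(Q)}^2+\theta^2\big)$. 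Choosing $\varepsilon$ small then gives $J(u,\tau)-J(\bar u,\bar\tau)\ge\kappa\big(\|v\|_{L^2(Q)}^2+\theta^2\big)$ with $\kappa$ approaching $\tfrac\alpha2$; since $\beta<\alpha$ and $(\|v\|_{L^2(Q)}+\theta)^2\le 2\big(\|v\|_{L^2(Q)}^2+\theta^2\big)$, the claimed estimate follows.

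I expect the decomposition step to be the real obstacle: $(v,\theta)$ belongs to $C_0\times\R$ only in the limit, so coercivity cannot be invoked directly, and the active component $v_A$ must be shown negligible. What makes this work is the strict bound $|D_uh|>\delta$ on $A_\delta$ (a strong-activity condition), which upgrades the otherwise inert sign information $D\ge0$ into the quantitative control $\|v_A\|_{L^2(Q)}^2\lesssim\varepsilon D$; and the Lipschitz estimate of Proposition~\ref{propositionContinuityL2} is precisely what allows this bad part to be measured in $L^2$ while the admissible perturbations are only controlled in $L^\infty$.
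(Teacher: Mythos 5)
Your proposal is correct and follows essentially the same route as the paper's proof: the same Taylor expansion at $(\bar u,\bar\tau)$ retaining the nonnegative first-order term, the same splitting of $v$ into its parts on and off the strongly active set $A_\delta(\bar u,\bar\tau)$ (your $v_A$, $v_I$ are the paper's $v_1$, $v_0$), the same key quantitative step $D_uJ(\bar u,\bar\tau)v \geq \kappa\delta\|v_1\|_{L^1(Q)} \geq \tfrac{\kappa\delta}{\varepsilon}\|v_1\|_{L^2(Q)}^2$ turning strong activity into $L^2$-control of the active part, and the same use of Proposition~\ref{propositionContinuityL2} together with Young's inequality before shrinking $\varepsilon$. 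The only caveat is your final appeal to $(\|v\|_{L^2(Q)}+\theta)^2 \leq 2\big(\|v\|_{L^2(Q)}^2+\theta^2\big)$, which loses a factor $2$ when $\beta$ is close to $\alpha$; but since both your argument and the paper's actually deliver the bound $\tfrac{\beta}{2}\big(\|v\|_{L^2(Q)}^2+\theta^2\big)$ for every $\beta<\alpha$, this slack is inherited from the theorem's formulation rather than a defect of your proof.
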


\begin{proof}
Let $\varepsilon > 0$. For the sake of clarity, $\varepsilon$ is defined at the end of the proof.
Let $(u,\tau) \in \mathcal{U}_{\text{ad}} \times (0,T)$
be such that
\begin{eqnarray*}
\| u-\bar{u} \|_\infty +
|\tau-\bar{\tau}| & \leq & \varepsilon.
\end{eqnarray*}
Define: $v= u-\bar{u}$, $\theta= \tau-\bar{\tau}$.
By~\eqref{eqNecCond1b} and by Taylor's theorem, there exists $\mu \in [0,1]$ such that
\begin{eqnarray} \label{eqCS2low0}
J(u,\tau)-J(\bar{u},\bar{\tau}) & = &
D_uJ(\bar{u},\bar{\tau})v + \frac{1}{2} D^2 J \big(\bar{u} + \mu v,\bar{\tau} + \mu \theta \big)(v,\theta)^2.
\end{eqnarray}
We proceed now as follows: We look for a lower estimate of $D_u J(\bar{u},\bar{\tau})v$ (given by~\eqref{eqCS2low1}) and a lower estimate of $D^2 J \big( \bar{u} + \mu v,\bar{\tau} + \mu \theta \big)(v,\theta)^2$ (given by~\eqref{eqCS2low2}). We begin with the derivation of the lower estimate of $D_u J(\bar{u},\bar{\tau})v$. We split $v$ into two terms, given by
\begin{eqnarray*}
v_0(t,x)=
\begin{cases}
\begin{array}{ll}
v(t,x) & \text{ if $(t,x) \notin A_\delta(\bar{u},\bar{\tau})$}, \\
0 & \text{ if $(t,x) \in A_\delta(\bar{u},\bar{\tau})$,}
\end{array}
\end{cases}
& & 
v_1(t,x)=
\begin{cases}
\begin{array}{ll}
0 & \text{ if $(t,x) \notin A_\delta(\bar{u},\bar{\tau})$}, \\
v(t,x) & \text{ if $(t,x) \in A_\delta(\bar{u},\bar{\tau})$,}
\end{array}
\end{cases}
\end{eqnarray*}
for a.\,e.\@ $(t,x) \in Q$, so that $v= v_0+v_1$. It is easy to check that $v_0 \in C_\delta(\bar{u},\bar{\tau})$. Note also that
\begin{eqnarray*}
\| v \|_{L^2(Q)}^2 & = & \| v_0 \|_{L^2(Q)}^2 + \| v_1 \|_{L^2(Q)}^2.
\end{eqnarray*}
Since $v \in T_{\mathcal{U}_{\text{ad}}}(\bar{u})$, from~\eqref{eqPontryaginWeak} we obtain that
\begin{eqnarray*}
D_u h \big( t,\pi(t,\bar{\tau}),x,\bar{y}(t,x),\bar{u}(t,x),\bar{p}(t,x) \big) v(t,x) \geq 0,
& &  \text{for a.\,e.\@ $(t,x) \in Q$}.
\end{eqnarray*}
It holds: $\bar{u}+ v_0 \in \mathcal{U}_{\text{ad}}$. Therefore, by~\eqref{eqNecCond1a}, and since $v_0 \in C_\delta(\bar{u},\bar{\tau})$,
\begin{eqnarray}
D_uJ(\bar{u},\bar{\tau})v
\geq  D_u J(\bar{u},\bar{\tau})v_1
& = &  \int_{A_\delta(\bar{u},\bar{\tau})}
\dot{\pi}(t,\bar{\tau}) D_u h(t,x,\bar{y}(t,x),\bar{u}(t,x),\bar{p}(t,x))v(t,x) \dd x \dd t \notag \\
& = & \int_{A_\delta(\bar{u},\bar{\tau})}
\dot{\pi}(t,\bar{\tau}) |D_u h(t,x,\bar{y}(t,x),\bar{u}(t,x),\bar{p}(t,x)) | \cdot |v(t,x)| \dd x \dd t \notag \\
& \geq & \kappa \delta \| v_1 \|_{L^1(Q)}
 \geq  \frac{\kappa \delta}{\varepsilon} \| v_1 \|_{L^2(Q)}^2, \label{eqCS2low1}
\end{eqnarray}
where $\kappa= \min(\bar{\tau},T-\bar{\tau})= \displaystyle \min_{t \in [0,T]} \dot{\pi}(t,\bar{\tau}) >0$. The last inequality follows from:
\begin{eqnarray*}
\| v_1 \|_{L^\infty(Q)} \leq \| v \|_{L^\infty(Q)} \leq \varepsilon.
\end{eqnarray*}
We look now for a lower estimate of $D^2 J \big(\bar{u} + \mu v,\bar{\tau} + \mu \theta \big)(v,\theta)^2$. We have
\begin{eqnarray} \label{eqCS2tech1}
D^2 J \big( \bar{u} + \mu v,\bar{\tau} + \mu \theta \big)(v,\theta)^2
& \geq & D^2 J(\bar{u},\bar{\tau})(v,\theta)^2 - C_{1b} \varepsilon (\| v_1 \|_{L^2(Q)}^2 + \| v_0 \|_{L^2(Q)}^2  + \theta^2),
\end{eqnarray}
where $C_{1b}>0$ is the constant given by Proposition~\ref{propositionContinuityL2}.
Moreover, by Proposition~\ref{propositionContinuityL2} and Young's inequality, for all $\gamma > 0$,
\begin{eqnarray}
\big| D^2 J(\bar{\mu},\bar{\tau})(v,\theta)^2 -  D^2 J(\bar{\mu},\bar{\tau})(v_0,\theta)^2 \big|
& = & \big| D^2 J(\bar{\mu},\bar{\tau})\big( (v_1,0),(2v_0 + v_1, 2 \theta) \big) \big| \notag \\
& \leq & C_{1a} \| v_1 \|_{L^2(Q)} \cdot \sqrt{4 \| v_0 \|_{L^2(Q)}^2 + \| v_1 \|_{L^2(Q)^2}^2 + 4 \theta^2} \notag \\
& \leq & \frac{C_{1a}}{\gamma} \| v_1 \|_{L^2(Q)}^2 + C_{1a}\gamma \big( 4 \| v_0 \|_{L^2(Q)}^2 + \| v_1 \|_{L^2(Q)}^2 + 4 \theta^2 \big).  \notag \\ \label{eqCS2tech2}
\end{eqnarray}
By assumption,
\begin{eqnarray} \label{eqCS2tech3}
D^2J(\bar{u},\bar{\tau})(v_0,\theta)^2 & \geq & \alpha \big( \| v_0 \|_{L^2(Q)}^2 + \theta^2 \big).
\end{eqnarray}
Combining~\eqref{eqCS2tech1},~\eqref{eqCS2tech2}, and~\eqref{eqCS2tech3} we obtain, for all $\gamma>0$,
\begin{eqnarray}
 D^2 J \big( \bar{u}+ \mu v, \bar{\tau} + \mu \theta \big)(v,\theta)^2
& \geq &
 \big( \alpha-C_{1b} \varepsilon - 4C_{1a} \gamma )(\| v_0 \|_{L^2(Q)}^2 + \theta^2)
- \Big( C_{1b} \varepsilon + \frac{C_{1a}}{\gamma} \Big) \| v_1 \|_{L^2(Q)}^2. 
\notag \\ \label{eqCS2low2}
\end{eqnarray}
Then, combining the two lower estimates~\eqref{eqCS2low1} and~\eqref{eqCS2low2} with~\eqref{eqCS2low0}, we obtain:
\begin{eqnarray*}
J(u,\tau)-J(\bar{u},\bar{\tau})
& \geq & \frac{1}{2} \big( \alpha-C_{1b} \varepsilon - 4 C_{1a} \gamma \big) \big( \| v_0 \|_{L^2(Q)}^2 + \theta^2 \big) + \left( \frac{\kappa \delta}{\varepsilon}-\frac{C_{1b} \varepsilon}{2} - \frac{C_{1a}}{2\gamma} \right) \| v_1 \|_{L^2(Q)}^2.
\end{eqnarray*}
Choosing
\begin{eqnarray*}
\gamma= \frac{\alpha-\beta}{8 C_{1a}}, & & 
\varepsilon= \min \left( 1, \frac{2\kappa \delta}{\beta+ C_{1b} + C_{1a}/\gamma}, \frac{\alpha-\beta}{2C_{1b}} \right),
\end{eqnarray*}
the result follows.
\end{proof}

\section{Constancy of the Hamiltonian} \label{sectionConstancy}

In this last section, we provide an additional result related to the variation of $t \in [0,2] \mapsto H(t,\pi(t,\bar{\tau}),\bar{y}(t),\bar{u}(t))$, when $(\bar{u},\bar{\tau})$ is a global solution to problem~\eqref{problem1}. When $f_1$, $f_2$, $\ell_1$, and $\ell_2$ are time-independent, the Hamiltonian is constant. This property was already known in the absence of switching times. We show that it still holds in the case of hybrid problems, despite the jump of the costate at $\tau$. Our approach is inspired from~\cite{RZ99} and consists in reformulating the problem, once again, by a change of variables.

Let us define:
\begin{eqnarray*}
\mathcal{T} & = & \Big\{ \nu \in L^\infty((0,2),\R) \,|\, \exists \varepsilon > 0,\ \nu(t) \geq \varepsilon, \text{ for a.\,e.\@ $t \in (0,2)$} \text{ and } \int_0^2 \nu(t) \dd t = T \Big\}, \\
\mathcal{T}_0 & = & \left\{ \nu \in L^\infty((0,2),\R) \,|\, \int_0^2 \nu(t) \dd t = 0 \right\}.
\end{eqnarray*}
Let $\nu_0$ be a fixed element of $\mathcal{T}$. Observe that $\mathcal{T}$ is an open subset in the affine subspace $\nu_0 + \mathcal{T}_0 \subset L^\infty(0,2)$. For all $u \in L^\infty((0,2) \times \Omega)$ and $\nu \in \mathcal{T}$, we denote by $S_2(u,\nu) \in \mathcal{Y}$ the solution to the following system
\begin{eqnarray}
\begin{cases}
\begin{array}{rcll}
\dot{y}(t) - \nu(t) \big[ \Delta y(t) + F \big(t, \int_0^t \nu ,y(t),u(t) \big) \big] & = & 0 & \text{ in $Q$}, \\
y(t) & = & 0 & \text{ on $\Sigma$}, \\
y(0) & = & y_0 & \text{ in $\Omega$}.
\end{array}
\end{cases}
\end{eqnarray}
Given $u \in L^\infty(Q)$, $\nu \in \mathcal{T}$, and $y \in \mathcal{Y}$, we define:
\begin{eqnarray*}
\tilde{J}_2(u,y,\nu)& = & \int_0^2 \int_{\Omega} \nu(t) L \big(t, \intSmall_0^t \nu ,y(t),u(t) \big) \dd t
+ \int_{\Omega} \big[ \Phi_1 \big( \intSmall_0^1 \nu,y(1) \big) + \Phi_2\big( y(2)\big) \big] \dd x, \\
J_2(u,\nu)&= & \tilde{J}_2(u,S_2(u,\nu), \nu).
\end{eqnarray*}
Finally, consider the following problem:
\begin{equation*} \label{problem2} \tag{$\mathcal{P}_2$}
\min_{u \in \mathcal{U}_{\text{ad}},\, \nu \in \mathcal{T}} \
J_2(u,\nu)
\end{equation*}

In order to prove the equivalence of problems~\eqref{problem0} and~\eqref{problem2}, we introduce the following mappings:
\begin{eqnarray*}
&\chi_{2a} \colon (v(\cdot),\tau) \in \mathcal{V}_{\text{ad}} \mapsto \big( v(\pi(\cdot,\tau)),\dot{\pi}(\cdot,\tau) \big) \in \mathcal{U}_{\text{ad}} \times \mathcal{T}, &\\
&\chi_{2b} \colon  (u(\cdot),\nu) \in \mathcal{U}_{\text{ad}} \times \mathcal{T} \mapsto
\big( u(\theta(\cdot)), \intSmall_0^1 \nu(t) \dd t \big) \in \mathcal{V}_{\text{ad}}, 
&  \text{where: } \int_0^{\theta(t)} \nu(s) \dd s = t, \quad \forall t \in [0,T].
\end{eqnarray*}

\begin{lemma} \label{lemmaSecondEquivalence}
For all $(v,\tau) \in \mathcal{V}_{\text{ad}}$, $J_2(\chi_{2a}(v,\tau))= J_0(v,\tau)$. Conversely, for all $(u,\tau) \in \mathcal{U}_{\text{ad}} \times \mathcal{T}$, $J_0(\chi_{2b}(u,\tau))= J_2(u,\tau)$.

Therefore, if $(v,\tau) \in \mathcal{V}_{\text{ad}}$ is a global solution to~\eqref{problem0}, then $\chi_{2a}(v,\tau)$ is a global solution to~\eqref{problem2}, and conversly, if $(u,\nu) \in \mathcal{U}_{\text{ad}} \times \mathcal{T}$ is a global solution to~\eqref{problem2}, then $\chi_{2b}(u,\tau)$ is a global solution to~\eqref{problem0}.
\end{lemma}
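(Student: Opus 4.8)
\emph{Strategy.} The plan is to mimic the proof of Lemma~\ref{lemmaFirstEquivalence}: I would first prove the two pointwise cost identities $J_2(\chi_{2a}(v,\tau))=J_0(v,\tau)$ and $J_0(\chi_{2b}(u,\nu))=J_2(u,\nu)$, together with the feasibility statements $\chi_{2a}(\mathcal{V}_{\text{ad}})\subset\mathcal{U}_{\text{ad}}\times\mathcal{T}$ and $\chi_{2b}(\mathcal{U}_{\text{ad}}\times\mathcal{T})\subset\mathcal{V}_{\text{ad}}$, and then deduce the correspondence of global minimizers by a contradiction argument. The feasibility part is elementary. For $(u,\nu)=\chi_{2a}(v,\tau)$ one has $\nu=\dot\pi(\cdot,\tau)\in\mathcal{T}$ (equal to $\tau$ on $(0,1)$ and $T-\tau$ on $(1,2)$, hence bounded below by $\min(\tau,T-\tau)>0$ with $\int_0^2\nu=T$), and $\int_0^t\nu=\pi(t,\tau)$; the bound constraints on $u=v(\pi(\cdot,\tau))$ follow from $\pi(t,\tau)\in(0,\tau)$ for $t\in(0,1)$ and $\pi(t,\tau)\in(\tau,T)$ for $t\in(1,2)$. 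Conversely, for $(u,\nu)\in\mathcal{U}_{\text{ad}}\times\mathcal{T}$ the primitive $\Pi(s)=\int_0^s\nu$ is strictly increasing from $0$ to $T$ with Lipschitz inverse $\theta$, so $\tau=\int_0^1\nu=\Pi(1)\in(0,T)$, $\theta(\tau)=1$, and $v=u(\theta(\cdot))\in\mathcal{V}_{\text{ad}}$ because $\theta(t)<1\iff t<\tau$.

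\emph{State transformation (the crux).} The key step is to show that the states transform covariantly under the time rescaling:
\[
S_2(\chi_{2a}(v,\tau)) = S_0(v,\tau)\bigl(\pi(\cdot,\tau)\bigr), \qquad S_0(\chi_{2b}(u,\nu)) = S_2(u,\nu)\bigl(\theta(\cdot)\bigr).
\]
For the second identity I set $w=S_2(u,\nu)$ and $z(t)=w(\theta(t))$. Since $\Pi'=\nu\geq\varepsilon$ a.e., both $\Pi$ and $\theta$ are bi-Lipschitz, so $z$ inherits the $\mathcal{Y}(0,T)$-regularity of $w$ and the chain rule gives $\dot z(t)=\dot w(\theta(t))/\nu(\theta(t))$ for a.e.\ $t$. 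Evaluating the defining equation of $S_2(u,\nu)$ at $s=\theta(t)$ and using $\Pi(\theta(t))=t$ and $u(\theta(t))=v(t)$, I obtain $\dot z-\Delta z-F(\theta(t),t,z,v)=0$; because $\theta(t)\lessgtr1\iff t\lessgtr\tau$, the selector $F(\theta(t),\cdot)$ equals $f_1$ on $(0,\tau)\times\Omega$ and $f_2$ on $(\tau,T)\times\Omega$, so $z$ solves exactly the system defining $S_0(v,\tau)$. Uniqueness of the solution (Assumption~\ref{hypRegularity}) then yields $z=S_0(v,\tau)$. The first identity is the same computation run forwards, and is in fact the special case $\nu=\dot\pi(\cdot,\tau)$ of the construction already used in Section~\ref{sectionChangeOfVar}, where $S(u,\tau)=S_0(\chi^{-1}(u,\tau))\circ\pi(\cdot,\tau)$.

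\emph{Cost identities.} With the state identities in hand, both cost equalities follow from the change of variables $t=\Pi(s)$ (equivalently $s=\theta(t)$), under which $dt=\nu(s)\,ds$. The factor $\nu$ in $\tilde J_2$ is precisely the Jacobian of this substitution, so the running-cost term of $\tilde J_2$ maps to $\int_0^\tau\!\int_\Omega\ell_1+\int_\tau^T\!\int_\Omega\ell_2$, the selector in $L$ splitting the $s$-integral over $(0,1)$ and $(1,2)$ into the $t$-integrals over $(0,\tau)$ and $(\tau,T)$. For the terminal terms I use $S_2(u,\nu)(1)=S_0(v,\tau)(\tau)$ with $\int_0^1\nu=\tau$, and $S_2(u,\nu)(2)=S_0(v,\tau)(T)$, which match $\Phi_1(\tau,\cdot)$ and $\Phi_2$ in $\tilde J_0$. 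This gives $J_2(\chi_{2a}(v,\tau))=J_0(v,\tau)$ and, symmetrically, $J_0(\chi_{2b}(u,\nu))=J_2(u,\nu)$.

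\emph{Global minimizers and main obstacle.} The maps $\chi_{2a}$ and $\chi_{2b}$ are not mutual inverses ($\chi_{2a}$ reaches only the two-piece-constant elements of $\mathcal{T}$), but the two cost identities together with feasibility suffice. If $(v,\tau)$ solves $(\mathcal{P}_0)$ and some $(u',\nu')\in\mathcal{U}_{\text{ad}}\times\mathcal{T}$ satisfied $J_2(u',\nu')<J_2(\chi_{2a}(v,\tau))=J_0(v,\tau)$, then $\chi_{2b}(u',\nu')\in\mathcal{V}_{\text{ad}}$ would give $J_0(\chi_{2b}(u',\nu'))=J_2(u',\nu')<J_0(v,\tau)$, contradicting optimality; the converse implication is symmetric, using $\chi_{2a}$. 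I expect the main obstacle to be the rigorous justification of the state transformation under a merely bi-Lipschitz change of time: establishing the chain rule for $\dot z$ in $L^2(Q)$, checking that $\mathcal{Y}$-regularity is preserved by composition with $\theta$ (resp.\ $\pi(\cdot,\tau)$), and verifying that the switching between $f_1$ and $f_2$ is carried correctly across the interface $s=1\leftrightarrow t=\tau$.
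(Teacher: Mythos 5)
Your proposal is correct and supplies precisely the elementary verification that the paper omits: the paper gives no proof of Lemma~\ref{lemmaSecondEquivalence}, remarking only that it is left to the reader and that $\chi_{2a}$ is not surjective and $\chi_{2b}$ is not injective -- the very point your two-sided contradiction argument (using both cost identities and both feasibility inclusions, rather than any inverse relationship between the maps) handles correctly. Your treatment of the bi-Lipschitz time change, including the chain rule $\dot z(t)=\dot w(\theta(t))/\nu(\theta(t))$, the preservation of $\mathcal{Y}$-regularity, the transfer of the selector via $\theta(t)<1 \iff t<\tau$ with $\tau=\int_0^1\nu$, and the appeal to uniqueness of the hybrid state, is exactly the intended route and consistent with the construction of $S$ in Section~\ref{sectionChangeOfVar}.
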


The proof is elementary, and so left to the reader. Observe that $\chi_{2a}$ is not surjective and that $\chi_{2b}$ is not injective.

\begin{lemma}
The mapping $S_2:(u,\nu) \in L^\infty(Q) \times \mathcal{T} \mapsto S_2(u,\nu) \in \mathcal{Y}$ is continuously  Fr\'echet differentiable.
\end{lemma}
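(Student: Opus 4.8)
The plan is to transcribe the proof of Lemma~\ref{lemmaRegS}, replacing the scalar reparametrization variable $\tau$ by the speed $\nu$ and applying the implicit function theorem to a suitable constraint map. Since $\mathcal{T}$ is an open subset of the affine space $\nu_0 + \mathcal{T}_0$, I would regard $\mathcal{T}_0 \subset L^\infty(0,2)$ as the Banach space of admissible perturbations of $\nu$, and introduce
\begin{eqnarray*}
e_2 \colon (y,u,\nu) \in \mathcal{Y} \times L^\infty(Q) \times \mathcal{T}
\mapsto \big( \dot{y}(\cdot) - \nu(\cdot) \big[ \Delta y(\cdot) + F(\cdot, \int_0^\cdot \nu, y(\cdot), u(\cdot)) \big],\, y(0)-y_0 \big),
\end{eqnarray*}
taking values in $L^2(Q) \times (H_0^1(\Omega) \cap C(\bar{\Omega}))$, so that $y = S_2(u,\nu)$ holds if and only if $e_2(y,u,\nu) = 0$. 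It then suffices to prove that $e_2$ is $C^1$ and that $D_y e_2(y,u,\nu)$ is a Banach space isomorphism at every point.

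For the regularity of $e_2$, the only dependence not already handled in Lemma~\ref{lemmaRegS} is the one on $\nu$, which enters in two ways. First, $\nu$ appears as the multiplicative factor $\nu(\cdot)$, through the embedding $L^\infty(0,2) \hookrightarrow L^\infty(Q)$, which is linear and continuous. Second, it appears inside $F$ through the \emph{integration operator} $\Lambda \colon \nu \mapsto \int_0^\cdot \nu(s) \dd s$, which is linear and continuous from $L^\infty(0,2)$ to $L^\infty(0,2)$ (indeed $\| \Lambda \nu \|_{L^\infty(0,2)} \leq 2 \| \nu \|_{L^\infty(0,2)}$); both maps are therefore $C^\infty$. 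I would then reuse the Nemytskii operator $G_6$ and the continuous bilinear multiplication of Lemma~\ref{lemmaRegS}, and write $e_2$ as a composition exactly as in~\eqref{eqRepresentationE}. By Lemma~\ref{lemmaNemytskii} this shows that $e_2$ is $C^2$, hence in particular $C^1$.

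For the invertibility of $D_y e_2(y,u,\nu)$, I would observe that the equation $D_y e_2(y,u,\nu) z = (\xi,z_0)$ is precisely the linear parabolic problem~\eqref{eqLinearizedSystem} with the coefficient $\nu$ in place of $\dot{\pi}(\cdot,\tau)$ and with zeroth-order term $b = D_y F(\cdot, \int_0^\cdot \nu, y, u) \in L^\infty(Q)$. Because $\nu \in \mathcal{T}$ satisfies $\varepsilon \leq \nu \leq \| \nu \|_{L^\infty(0,2)}$ a.\,e.\@ for some $\varepsilon > 0$, it obeys the two-sided bound demanded by Lemma~\ref{lemmaRegularityLinParabolic}, and $\| b \|_{L^\infty(Q)}$ is finite by the boundedness condition of Assumption~\ref{hypRegularity}. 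That lemma then yields a unique $z \in \mathcal{Y}$ depending continuously on the data, so that $D_y e_2(y,u,\nu)$ is a continuous bijection with continuous inverse, and the implicit function theorem concludes the proof.

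The argument is essentially a verbatim adaptation of Lemma~\ref{lemmaRegS}, so I do not expect a serious obstacle; the two points genuinely specific to the variable $\nu$ are that the inner time $\int_0^\cdot \nu$ must remain in $[0,T]$ for $F$ to be evaluated on its domain, and that $\nu$ must stay uniformly bounded away from $0$ and $\infty$ for Lemma~\ref{lemmaRegularityLinParabolic} to apply. Both are guaranteed by the definition of $\mathcal{T}$: any $\nu \in \mathcal{T}$ is nonnegative with $\int_0^2 \nu = T$, so $\int_0^\cdot \nu$ is nondecreasing from $0$ to $T$. The only real care to take is to perform the differentiation on the affine subspace $\nu_0 + \mathcal{T}_0$ rather than on all of $L^\infty(0,2)$.
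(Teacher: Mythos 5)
Your proposal is correct and takes essentially the same route as the paper: the paper also applies the implicit function theorem to the constraint map, reusing the representation~\eqref{eqRepresentationE} of Lemma~\ref{lemmaRegS} with $G_3$, $G_4$ replaced by the identity embedding $\nu \mapsto \nu$ and the integration operator $\nu \mapsto \int_0^{\cdot} \nu(s) \dd s$ (your $\Lambda$), and invokes Lemma~\ref{lemmaRegularityLinParabolic} for the surjectivity of $D_y e$. Your explicit remarks --- that the differentiation takes place on the affine space $\nu_0 + \mathcal{T}_0$, and that $\varepsilon \leq \nu \leq \| \nu \|_{L^\infty(0,2)}$ supplies the two-sided bound required by Lemma~\ref{lemmaRegularityLinParabolic} --- are precisely the points the paper leaves implicit.
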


\begin{proof}
The proof is very similar to the one of Lemma~\ref{lemmaRegS}. The mapping $e$ is now defined as follows:
\begin{eqnarray*}
e \colon (y,u,\nu) \in \tilde{\mathcal{Y}} \times L^\infty(Q) \times \mathcal{\tau}
& \mapsto & \dot{y}(\cdot) - \nu(\cdot)\big[ \Delta y(\cdot) + F \big( N(\cdot), y(\cdot), u(\cdot) \big) \big]
\in L^2(Q),
\end{eqnarray*}
where $N(t)= \int_0^t \nu(s) \dd s$.
It can be represented as in~\eqref{eqRepresentationE}, if one replaces $\tau$ by $\nu$, and $G_3$, $G_4$ by the following definitions:
\begin{eqnarray*}
G_3 \colon \nu \in \mathcal{T} \mapsto \nu \in L^\infty(0,2), & & 
G_4 \colon \nu \in \mathcal{T} \mapsto \big( t \mapsto \intSmall_0^t \nu(s) \dd s \big) \in L^\infty(0,2).
\end{eqnarray*}
The surjectivity of $D_ye(y,u,\nu)$ is still a consequence of Lemma~\ref{lemmaRegularityLinParabolic}. Thus, $S_2$ is continuously Fr\'echet differentiable, by the implicit function theorem.
\end{proof}

Let $(\bar{v},\bar{\tau})$ be a global solution to~\eqref{problem0}. Let $(\bar{u},\bar{\nu})= \chi_{2a}(\bar{u},\bar{\tau})$. By Lemma~\ref{lemmaSecondEquivalence}, $(\bar{u},\bar{\nu})$ is a solution to~\eqref{problem2} and by Lemma~\ref{lemmaFirstEquivalence}, $(\bar{u},\bar{\tau})$ is a solution to~\eqref{problem1}. Let $\bar{p}$ be the associated costate.
We define a new Lagrangian as follows:
\begin{eqnarray*}
\begin{array} {rrcl}
\mathcal{L}_2 : & (L^\infty(\Omega)^2 \times \mathcal{Y}) \times L^\infty(Q) \times \mathcal{T} & \rightarrow & \R \\
 & \big( \mathbf{y}= (a_1,a_2,y), u, \nu \big) & \mapsto &
\displaystyle \int_{\Omega} \Phi_1 \big( \intSmall_0^1 \nu, a_1 \big) \dd x
+ \int_{\Omega} \Phi_2 \big(a_2 \big) \dd x \\
& & &  \displaystyle+ \int_0^2 \nu(t) H \big(t, \intSmall_0^t \nu,y(t),u(t),\bar{p}(t) \big) \dd t
\\
& & & \displaystyle - \phantom{\Big|} \langle \bar{p}, \dot{y}(\cdot) \rangle_{L^2(Q)} - \langle \bar{p}(0), y(0)-y_0 \rangle_{L^2(\Omega)} \\
& & &  + \phantom{\Big|} \langle \bar{p}(2), y(2)-a_2 \rangle_{L^2(\Omega)}
- \langle [\bar{p}](1), y(1)-a_1 \rangle_{L^2(\Omega)}.
\end{array}
\end{eqnarray*}
It is easy to prove that the mapping $\mathcal{L}$ is Fr\'echet differentiable, by adapting the proof of Lemma~\ref{lemmaTwiceDiffLag}. We define:
\begin{eqnarray*}
\xi(t) & = & \int_0^t \dot{\pi}(s,\bar{\tau}) D_t H \big(s,\pi(s,\bar{\tau}),\bar{y}(s),\bar{u}(s),\bar{p}(s) \big) \dd s.
\end{eqnarray*}
Recall that $D_t H$ denotes the partial derivative of $H$ with respect to its second variable.

\begin{theorem} \label{theoremConstancy}
The function $t \in [0,T] \mapsto H \big(t,\pi(t,\bar{\tau}),y(t),u(t) \big)-\xi(t)$ is constant.
\end{theorem}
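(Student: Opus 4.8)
The plan is to exploit the second change of variables leading to problem~\eqref{problem2}, in which the scalar switching time is replaced by the speed function $\nu$, ranging over the relatively open set $\mathcal{T}\subset\nu_0+\mathcal{T}_0$. By Lemma~\ref{lemmaSecondEquivalence} the pair $(\bar u,\bar\nu)$ with $\bar\nu=\dot\pi(\cdot,\bar\tau)$ is a global minimizer of $J_2$, and since $\bar\nu$ lies in the relative interior of $\mathcal{T}$, the first-order stationarity in $\nu$ is an \emph{equality}: $D_\nu J_2(\bar u,\bar\nu)\mu=0$ for every admissible direction $\mu\in\mathcal{T}_0$, that is, for every $\mu\in L^\infty(0,2)$ with $\int_0^2\mu=0$. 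This is the essential gain of the reparameterization: whereas the switching time $\tau$ only furnished the scalar condition $D_\tau J=0$ of Theorem~\ref{theoremCN1}, the function $\nu$ furnishes a condition for \emph{each} mean-zero perturbation, which is exactly what is needed to pin down the Hamiltonian pointwise.

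Next I would pass from $J_2$ to the Lagrangian $\mathcal{L}_2$. Writing $J_2$ as $\mathcal{L}_2$ evaluated at the triple $\big(S_2(u,\nu)(1),S_2(u,\nu)(2),S_2(u,\nu)\big)$, as was done for $J$ in Proposition~\ref{le:derredcost}, and differentiating, the costate $\bar p$ has been chosen precisely so that $D_{\mathbf y}\mathcal{L}_2(\bar{\mathbf y},\bar u,\bar\nu)=0$, by the same computation as in Lemma~\ref{lemmaTwiceDiffLag} (using the jump relation $-[\bar p](1)=D_y\Phi_1(\bar\tau,\bar y(1))$ and the terminal condition $\bar p(2)=D_y\Phi_2(\bar y(2))$). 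Hence the state sensitivities drop out and $D_\nu J_2(\bar u,\bar\nu)\mu=D_\nu\mathcal{L}_2(\bar{\mathbf y},\bar u,\bar\nu)\mu$ for all $\mu\in\mathcal{T}_0$, and the problem is reduced to an explicit differentiation of $\mathcal{L}_2$ with respect to $\nu$ alone.

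The heart of the argument is this computation. The variable $\nu$ enters $\mathcal{L}_2$ in two ways: as the weight multiplying the Hamiltonian in $\int_0^2\nu(t)H\big(t,\int_0^t\nu,\cdot\big)\dd t$, and through the physical time $\int_0^t\nu$ that appears as the second argument of $H$ (and through $\int_0^1\nu$ in the term $\Phi_1(\int_0^1\nu,\cdot)$). Differentiating in a direction $\mu$ and writing $M(t)=\int_0^t\mu$, the first mechanism produces $\int_0^2\mu(t)\,H[t]\dd t$ and the second produces $\int_0^2\dot\pi(t,\bar\tau)D_tH[t]\,M(t)\dd t$. The decisive step is an integration by parts in $t$ on the latter: since $\int_0^2\mu=0$ one has $M(0)=M(2)=0$, so the boundary terms vanish, and because $\tfrac{\dd}{\dd t}\xi(t)=\dot\pi(t,\bar\tau)D_tH[t]$ one obtains exactly $-\int_0^2\mu(t)\,\xi(t)\dd t$. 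Collecting terms, stationarity reads $\int_0^2\mu(t)\big(H[t]-\xi(t)\big)\dd t=0$ for all mean-zero $\mu$, and the elementary fact that a function of $L^\infty(0,2)$ which is $L^2(0,2)$-orthogonal to every mean-zero function is a.e.\@ constant then yields the claim.

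The step I expect to be the main obstacle is the correct bookkeeping at the switching instant $t=1$, which is where the hybrid nature of the problem enters. There $H[t]$ is genuinely discontinuous (the dynamics switch from $F_1$ to $F_2$ and the costate jumps), and the terminal term $\Phi_1(\int_0^1\nu,\bar y(1))$ contributes, through its dependence on $\int_0^1\nu$, a term proportional to $\int_0^1\mu$; one must also handle carefully the elliptic contribution $\dot\pi\langle\bar p,\Delta\bar y\rangle$ hidden in the pairing of $\bar p$ with the state equation, so that the Hamiltonian assembled in the integrand is the right one. The delicate point is to verify that the switching-cost contribution and the costate jump at $t=1$ are consistent with constancy \emph{across} the switch rather than producing a residual jump in $H[t]-\xi(t)$; concretely, this amounts to checking that the direction $\mu=\dot\pi_\tau$ reproduces the transversality condition $D_\tau J(\bar u,\bar\tau)=0$, so that the scalar stationarity already obtained in Theorem~\ref{theoremCN1} is subsumed by the functional stationarity and no spurious jump survives. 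The remaining ingredients---differentiability of $\mathcal{L}_2$ and the product and chain rules for the $\nu$-variation---are routine adaptations of the estimates already established.
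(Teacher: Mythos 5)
Your main line coincides with the paper's proof step for step: pass to \eqref{problem2} via Lemma~\ref{lemmaSecondEquivalence}; exploit that $\mathcal{T}$ is relatively open in $\nu_0+\mathcal{T}_0$, so global optimality of $(\bar{u},\bar{\nu})$ yields $D_\nu J_2(\bar{u},\bar{\nu})\mu=0$ for \emph{every} mean-zero $\mu$; reduce $D_\nu J_2$ to $D_\nu \mathcal{L}_2$ through \eqref{eqConstancy1} and \eqref{eqConstancy2}; integrate by parts using $M(0)=M(2)=0$ and $\frac{\dd}{\dd t}\xi(t)=\dot{\pi}(t,\bar{\tau})D_tH[t]$; and conclude by orthogonality to all mean-zero functions (the paper's decomposition $\omega=\bar{\omega}+\tilde{\omega}$ with $\theta(t)=\int_0^t\tilde{\omega}$ is exactly your $\mu$, $M$). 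So the core of your proposal is the published argument.

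The gap sits in your last paragraph, and it is instructive because the paper's own proof shares it. The contribution of $\nu\mapsto\int_\Omega\Phi_1\big(\intSmall_0^1\nu,a_1\big)\dd x$ that you flag is genuinely present: it adds $c_1\,\theta(1)$ to $D_\nu\mathcal{L}_2(\bar{\mathbf{y}},\bar{u},\bar{\nu})\tilde{\omega}$, where $c_1=\int_\Omega D_\tau\Phi_1(\bar{\tau},\bar{y}(1))\dd x$ and $\theta(1)=\int_0^2\mathds{1}_{(0,1)}(t)\,\tilde{\omega}(t)\dd t$. Carrying it through, stationarity gives
\begin{equation*}
\int_0^2 \big( H[t]-\xi(t)+c_1\mathds{1}_{(0,1)}(t) \big)\,\tilde{\omega}(t)\dd t = 0
\end{equation*}
for all mean-zero $\tilde{\omega}$, so $H-\xi+c_1\mathds{1}_{(0,1)}$ is a.e.\@ constant: $H-\xi$ is constant on $(0,1)$ and on $(1,2)$ separately, with jump $(H-\xi)(1^+)-(H-\xi)(1^-)=c_1$. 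Your proposed repair --- testing the direction $\mu=\dot{\pi}_\tau$ to recover the transversality condition of Theorem~\ref{theoremCN1} --- cannot remove this jump: $\dot{\pi}_\tau$ is merely one mean-zero direction among those already used, so that identity is \emph{implied by} the stationarity family and adds no new constraint; it is a consistency check, not a cancellation. Constancy across the switch as stated thus requires $c_1=0$ (in particular it holds when $\Phi_1$ does not depend on $\tau$, which covers the corollary where $\xi\equiv0$); in general the correct conclusion is the hybrid jump condition above. Note that the paper's displayed expression for $D_\nu\mathcal{L}_2(\bar{\mathbf{y}},\bar{u},\bar{\nu})\tilde{\omega}$ silently omits this term, so your bookkeeping worry identifies a real lacuna in the published proof rather than a defect of your own scheme. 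Your second flagged point is substantive for the same reason: for \eqref{eqConstancy1} to hold at the solution, $\mathcal{L}_2$ must pair $\bar{p}$ with $\nu\Delta y$ as well as with $\dot{y}$ (compare the intended term $\langle\bar{p},\dot{\pi}(\cdot,\tau)\Delta y-\dot{y}\rangle_{L^2(Q)}$ in $\mathcal{L}$, as used in the proof of Lemma~\ref{lemmaTwiceDiffLag}), and differentiating that pairing in $\nu$ adds $\langle\bar{p}(t),\Delta\bar{y}(t)\rangle_{L^2(\Omega)}$ to the coefficient of $\tilde{\omega}$; what the argument then actually proves constant is the diffusion-inclusive Hamiltonian $H[t]+\langle\bar{p}(t),\Delta\bar{y}(t)\rangle_{L^2(\Omega)}-\xi(t)$ (modulo the $c_1$-jump), in line with the non-hybrid constancy result cited in the introduction, rather than $H$ as defined in \eqref{eqDefHam} alone.
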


\begin{proof}
For all $(u,\nu) \in \mathcal{U}_{\text{ad}} \times \mathcal{T}$,
\begin{eqnarray} \label{eqConstancy1}
J_2(u,\nu) & = & \mathcal{L}_2(\mathbf{S}(u,\nu),u,\nu).
\end{eqnarray}
Moreover, for all $\mathbf{z} \in L^\infty(\Omega)^2 \times \mathcal{Y}$,
\begin{eqnarray} \label{eqConstancy2}
D_{\mathbf{y}} \mathcal{L}_2(\bar{\mathbf{y}},\bar{u},\bar{\nu})\mathbf{z} & = & 0,
\end{eqnarray}
where $(\bar{u}, \bar{\nu})= (\bar{u},\dot{\pi}(\cdot,\bar{\pi})= \chi_{2a}(\bar{u},\bar{\tau})$.
To prove that $H[\cdot]-\xi(\cdot)$ is constant, it is sufficient to prove that for all $\omega \in L^\infty(0,2)$,
\begin{eqnarray*}
\int_0^2 (H[t]-\xi(t)-\bar{H}) \, \omega(t) \dd t & = & 0,
\end{eqnarray*}
where $\bar{H}:= \displaystyle \frac{1}{2} \int_0^2 H[t]-\xi(t) \dd t$. Let $\omega \in L^\infty(0,2)$, and set $\bar{\omega}:= \displaystyle\frac{1}{2} \int_0^2 \omega (t) \dd t$. Let $\tilde{\omega}:= \displaystyle \omega- \bar{\omega}$ and $\theta(t):= \displaystyle\int_0^t \tilde{\omega}(s) \dd s$. By construction, $\tilde{\omega} \in \mathcal{T}_0$ and $0= \theta(0)= \theta(2)$.
Since $\mathcal{T}$ is open and $(\bar{u},\bar{\nu})$ is optimal, we deduce from~\eqref{eqConstancy1} and~\eqref{eqConstancy2} that:
\begin{eqnarray*}
0 & = & D_\nu J_2(\bar{u},\bar{\nu}) \tilde{\omega}
= D_{\nu} \mathcal{L}_2(\bar{\mathbf{y}},\bar{u},\bar{\nu}) \tilde{\omega}
= \int_0^2 H[t] \tilde{\omega}(t) + \dot{\pi}(t,\bar{\tau})D_tH[t] \theta(t) \dd t.
\end{eqnarray*}
Integrating by parts and using the fact that $0= \theta(0)= \theta(2)$, we obtain that
\begin{eqnarray} \label{eqConstancy3}
0 & = & \int_0^2 (H[t]-\xi(t))\, \tilde{\omega}(t) \dd t.
\end{eqnarray}
Observing that
\begin{eqnarray*}
\int_0^2 (H[t]-\xi(t)-\bar{H}) \, \bar{\omega} \dd t = 0 & \text{and} &
\int_0^2 \bar{H} \tilde{\omega}(t) \dd t = 0,
\end{eqnarray*}
we deduce from~\eqref{eqConstancy3} that
\begin{eqnarray*}
 \int_0^2 (H[t]-\xi(t)-\bar{H})\omega(t) \dd t 
& = & \int_0^2 (H[t]-\xi(t)- \bar{H}) \bar{\omega}(t) \dd t \\
& & + \int_0^2 (H[t]-\xi(t)) \tilde{\omega}(t) \dd t
+ \int_0^2 \bar{H} \tilde{\omega}(t) \dd t 
 =   0,
\end{eqnarray*}
which concludes the proof.
\end{proof}

\begin{corollary}
If $f_1$, $f_2$, $\ell_1$ and $\ell_2$ do not depend on $t$, then the Hamiltonian is constant in time.
\end{corollary}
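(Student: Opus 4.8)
The plan is to deduce this directly from Theorem~\ref{theoremConstancy}, which asserts that $t \mapsto H[t] - \xi(t)$ is constant, where $\xi(t) = \int_0^t \dot{\pi}(s,\bar{\tau}) D_t H[s] \dd s$ and $D_t H$ denotes the partial derivative of the Hamiltonian with respect to its second argument. It therefore suffices to show that the time-independence of $f_1$, $f_2$, $\ell_1$, $\ell_2$ forces $\xi \equiv 0$, so that $H[\cdot]-\xi(\cdot)$ collapses to $H[\cdot]$ itself.

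First I would inspect the structure of the Hamiltonian. By definition, $H_i(t,y,u,p) = \int_{\Omega} L_i(t,y,u) \dd x + \langle p, F_i(t,y,u) \rangle_{L^2(\Omega)}$ for $i \in \{1,2\}$, so the dependence of $H_i$ on its first variable enters exclusively through $L_i$ and $F_i$, that is, through $\ell_i$ and $f_i$. Under the hypothesis that $f_1$, $f_2$, $\ell_1$, $\ell_2$ do not depend on $t$, the maps $L_i$ and $F_i$ -- and hence $H_i$ -- are independent of their time argument. Consequently $D_t H_i \equiv 0$ for $i \in \{1,2\}$, and by the piecewise definition~\eqref{eqDefHam} of $H$ we obtain $D_t H[s] = 0$ for a.\,e.\@ $s \in (0,2)$.

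It then follows that $\xi(t) = \int_0^t \dot{\pi}(s,\bar{\tau}) \cdot 0 \dd s = 0$ for all $t$, so that $H[\cdot]-\xi(\cdot)$ coincides with $H[\cdot]$; Theorem~\ref{theoremConstancy} thus yields the constancy of $t \mapsto H[t]$, which is the claim. The only point requiring care -- and the closest thing to an obstacle -- is the bookkeeping of variables: one must confirm that the differentiation variable in $D_t H$ is precisely the slot carrying the explicit time-dependence of $\ell_i$ and $f_i$, and neither the reparameterized time $\pi(t,\bar{\tau})$ nor an outer integration variable. This is guaranteed by the convention fixed after~\eqref{eqDefHam}, namely that $D_t H$ refers to the partial derivative with respect to the second variable in the definition of $H$.
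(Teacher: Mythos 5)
Your proposal is correct and follows exactly the paper's own argument: the time-independence of $f_1$, $f_2$, $\ell_1$, $\ell_2$ gives $D_tH\big(s,\pi(s,\bar{\tau}),\bar{y}(s),\bar{u}(s),\bar{p}(s)\big)=0$, hence $\xi \equiv 0$, and Theorem~\ref{theoremConstancy} yields the constancy of $H[\cdot]$. Your added remark on identifying the differentiation slot of $D_tH$ with the explicit time variable of $\ell_i$ and $f_i$ is a careful (and accurate) elaboration of what the paper leaves implicit.
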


\begin{proof}
In this case, $D_tH\big(s, \pi(s,\bar{\tau}), \bar{y}(s),\bar{u}(s),\bar{p}(s) \big)= 0$ and therefore, $\xi= 0$.
\end{proof}

\section{Numerical experiment: The competitive prey-predator system} \label{best-sec}

As an illustration of the theoretical findings, we propose to consider the Lotka-Volterra PDE-system, describing the evolution of populations of preys and predators. Denote by $\Omega = (0,1)$, and let $y_1$, $y_2$ be the distributions of populations of preys and predators, respectively. The coupled partial differential equations system we are interested in is given by:
\begin{eqnarray}
\left\{ \begin{array} {rcl}
\dot{y}_1 - \nu_1\Delta y_1 = y_1(1-c_1y_1)\left(a-by_2 + u_1\mathds{1}_{\hat{\omega}_1}\right)
& &\text{in } (0,T) \times \Omega, \\
\dot{y}_2 - \nu_2\Delta y_2 = y_2(1-c_2y_2)\left(qy_1-r + u_2\mathds{1}_{\hat{\omega}_2}\right)
& &\text{in } (0,T) \times \Omega, \\
y_1 = y_2 = 0 & & \text{on } (0,T) \times \partial \Omega, \\
y_1(0,\cdot) = y_{1,0}, \quad y_2(0,\cdot) = y_{2,0} & & \text{in } \Omega.
\end{array} \right. \label{sysLotka}
\end{eqnarray}
Note that the control and the state are vector-valued. As mentioned in the introduction, the results of the article can be easily extended to such a situation.
In this system, $a$, $r$, $c_1$, $c_2$, $\nu_1$, and $\nu_2$ are given positive constants. The variables $q$ and $b$ are positive constants on $(0,\tau) \times \Omega$ and $(\tau,T) \times \Omega$. Their value changes at time $\tau$.
The initial data $y_{1,0}$ and $y_{2,0}$ that we have chosen are smooth and satisfy the homogeneous Dirichlet boundary condition on $\partial \Omega$. They are given by
\begin{eqnarray*}
	y_{1,0}(x) & = & 10.0*(1.0 - \exp(-(1.0-x)))*(\exp(-(1.0-x)) - \exp(-1.0)), \\
	y_{2,0}(x) & = & 20.0*(1.0 - \exp(-x))*(\exp(-x) - \exp(-1.0)).
\end{eqnarray*}
The control is realized through two functions $u_1$ and $u_2$, whose supports are given by $\hat{\omega}_1 = (0.00,0.25)$ and $\hat{\omega}_2 = (0.75,1.00)$, respectively. The objective we define is to maximize a quantity $\phi_1$ representing the population of preys on a subdomain $\omega := (0.48, 0.52)$ at some optimal time $\tau$ to be determined. The terminal cost $\phi_2$ is the same function as $\phi_1$. More precisely, by denoting $y = (y_1,y_2)^T$ we consider
\begin{eqnarray*}
	\phi_1(\tau,x,y(\tau,x)) =
	\frac{1}{2} |y_1(\tau,x) |^2\mathds{1}_{\omega}(x), 
	& & 
	 \phi_2(x,y(T,x)) = \frac{1}{2} |y_1(T,x) |^2\mathds{1}_{\omega}(x).
\end{eqnarray*}
Denoting by $u = (u_1,u_2)^T$, the running cost functional (to be maximized) is defined as
\begin{eqnarray*}
	\ell(y(t,x),u(t,x)) & = & -\frac{\alpha}{2} \left( u_1(t,x)^2 + u_2(t,x)^2 \right).
\end{eqnarray*}
The parameters of the model are set to
\begin{eqnarray*}
	T = 30.0, \quad \nu_1 = \nu_2 = 0.001, \quad a=0.3, \quad r = 0.2, \quad c_1 = c_2 = 0.05.
\end{eqnarray*}
The change of dynamics is realized through the coefficients $q$ and $b$, which can be interpreted respectively as the capacity of reproduction of predators and their tendency of killing preys: Their values are given by
\begin{eqnarray*}
	q = b & = & \left\{ \begin{array} {lcl}
		0.1 & & \text{ if } t \leq \tau, \\ 0.07 & & \text{ if } t > \tau.
	\end{array} \right. 
\end{eqnarray*}
It means that after having reached a maximum for the population of preys in the region $\omega$ at time $\tau$, through this change of dynamics we facilitate the subsistence of preys at time $T$ by restricting their interaction with predators. The large number of preys after $\tau$ can indeed help the predators too much to reproduce, and then could lead to the extinction of preys before time $T$.\\

The space discretization is made by P1-finite elements, with $1001$ degrees of freedom. The time-evolution discretization consists of $1000$ time steps, and is performed with a Crank-Nicholson scheme for the state system~\eqref{sysLotka}, and an implicit Euler scheme for the corresponding adjoint system. In practice, in order to make the transport effect easier in space, we add in the left-hand-side of the two first equations of system~\eqref{sysLotka} advection terms of type $\beta (y_1 \cdot \nabla)y_1$ and $\beta (y_2 \cdot \nabla)y_2$, with $\beta = 0.0005$. At each time step, nonlinearities are solved with a Newton method.\\
The results, for the cost parameter $\alpha = 10^{-6}$, are presented in figure~\ref{superfigure}. A Barzilai-Borwein method (see~\cite{BB}) initialized with a line-search step is performed in order to vanish the gradient of the cost function, given by Proposition~\ref{le:derredcost}. The optimize-discretize approach actually introduces a residual error (approximately $10^{-5}\%$ here), which is observed to be proportional to the space step. That is why we stop the algorithm when the variation of the norm of the gradient (between two iterations) as smaller than $10^{-10}\%$. We present only the evolution of the population of preys (the variable $y_1$) and the evolution of their control (the control $u_1$). The optimal control $u_2$ is actually not activated here.

An optimal switching time equal to $\tau \approx 13.5919$ has been obtained. We observe that the control starts by killing preys, likely in order to restrain the population of predators as much as possible. Then, once the population of predators is low enough, the control increases and becomes positive, so that this introduction of preys in the domain leads more efficiently to their reproduction. Secondly, we notice that the control is much more active (and negative) after time $\tau$, in order to respond to the terminal cost. This is probably due to the fact that after the first maximum reached at time $\tau$, the population of preys is much more important than at the initial time, and then a more -- negative -- active control is needed in order to avoid that the population of predators becomes too important. We also observe that the advection terms added in the model help the cost function to increase by approximately $24\%$, with respect to the simulation without control, instead of by approximately $3\%$ with $\beta =0$ (this case not presented here). Last, we observe that a delay is encoded into the model: The control is no more active shortly before the optimal time $\tau$, or before the terminal time. This is due to the time needed to transport preys into the domain from one point to another.

\section{Conclusion}

We have provided first- and second-order optimality conditions for a hybrid control problem for a system governed by a semilinear parabolic equation. The analysis is based on a reformulation of the problem obtained by a change of variable in time. We have also shown that the framework of strong variational solutions (for the state equation) is an appropriate framework for the derivation of optimality conditions. A theoretical property, related to the variation of the Hamiltonian along an optimal trajectory, has been proved. Finally, we have provided numerical results for a hybrid Lotka-Volterra PDE-system.

The results of this article could be extended to different models of PDEs, for example to PDEs of hyperbolic type.
It would also be of interest to allow for the presence of state constraints.
A second-order sensitivity analysis of the value problem of our problem, with the help of a conveniently defined Riccati equation, is another topic for future work.

\begin{minipage}{\linewidth}
	\hspace{-0.035\linewidth}%\centering
	\begin{minipage}{0.20\linewidth}
		\begin{figure}[H]
			\includegraphics[trim = 1cm 0cm 2cm 1cm, clip, scale=0.12]{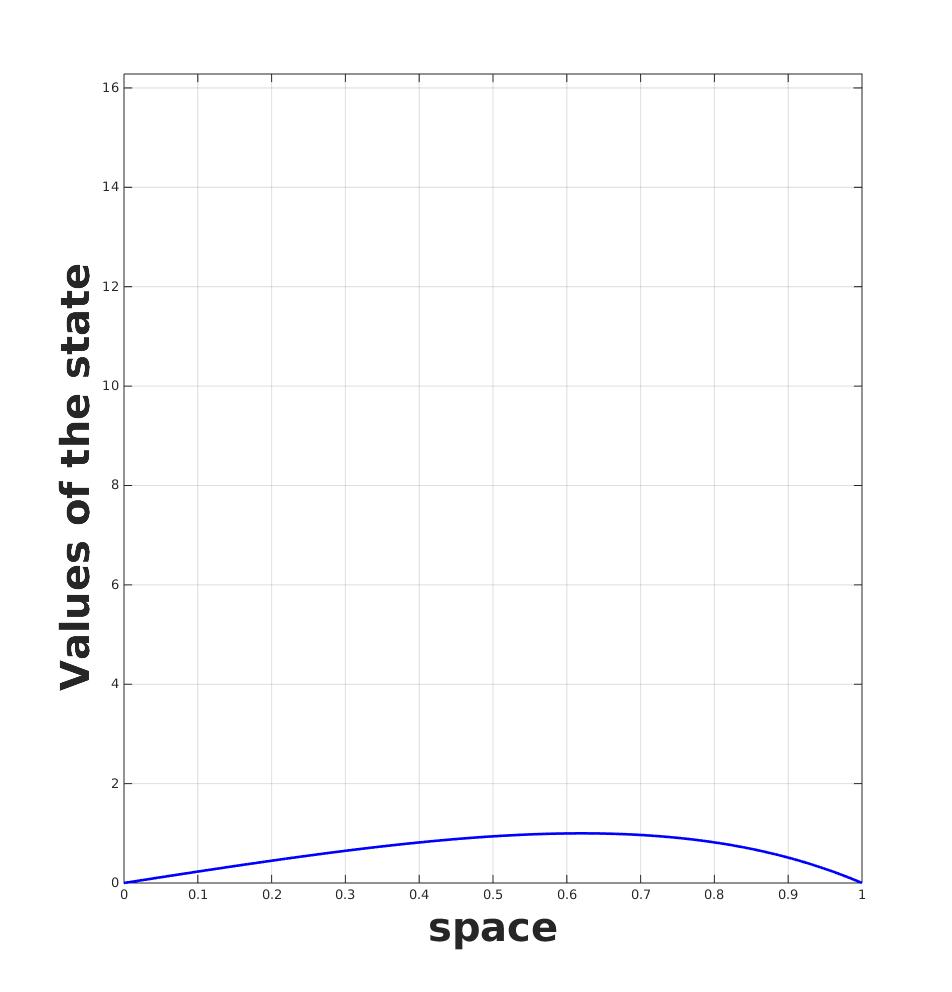} \\
			\includegraphics[trim = 1cm 0cm 2cm 1cm, clip, scale=0.12]{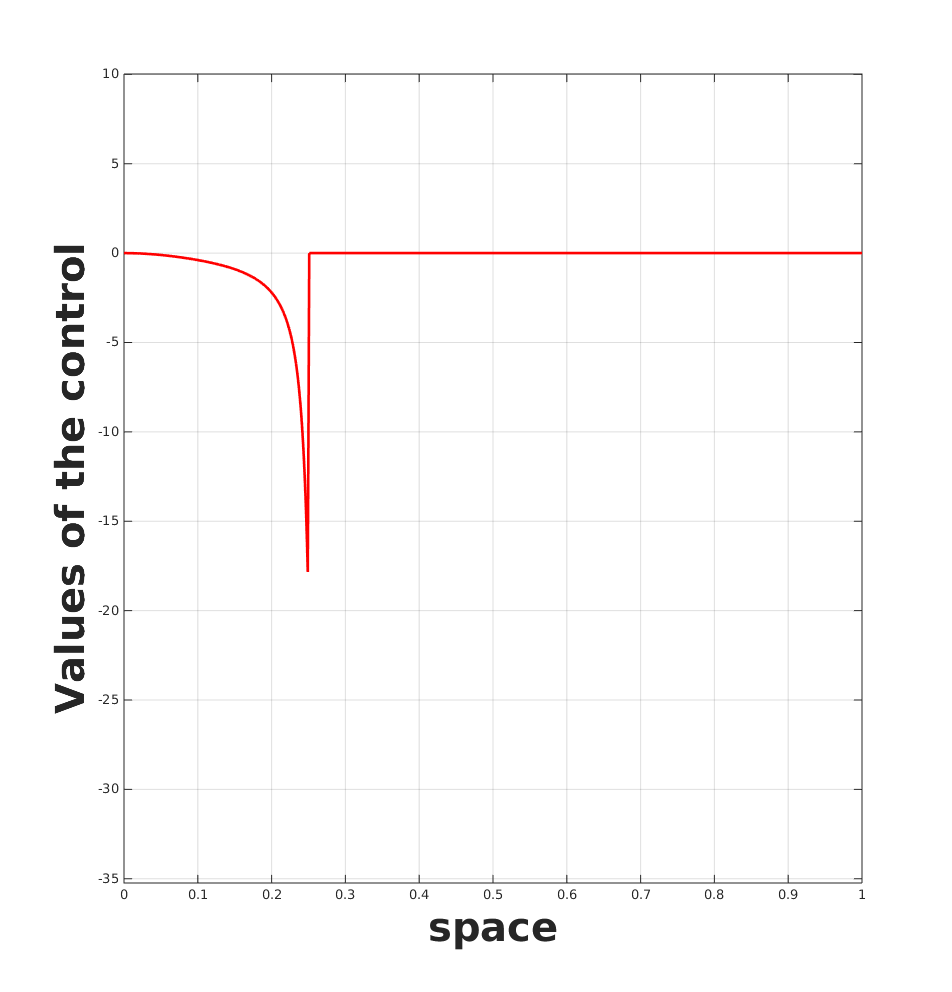}
			\begin{center}\begin{small} $ t = 0.0272 $ \end{small}\end{center}
		\end{figure}
	\end{minipage}
	\hspace{-0.01\linewidth}
	\begin{minipage}{0.20\linewidth}
		\begin{figure}[H]
			\includegraphics[trim = 1cm 0cm 2cm 1cm, clip, scale=0.12]{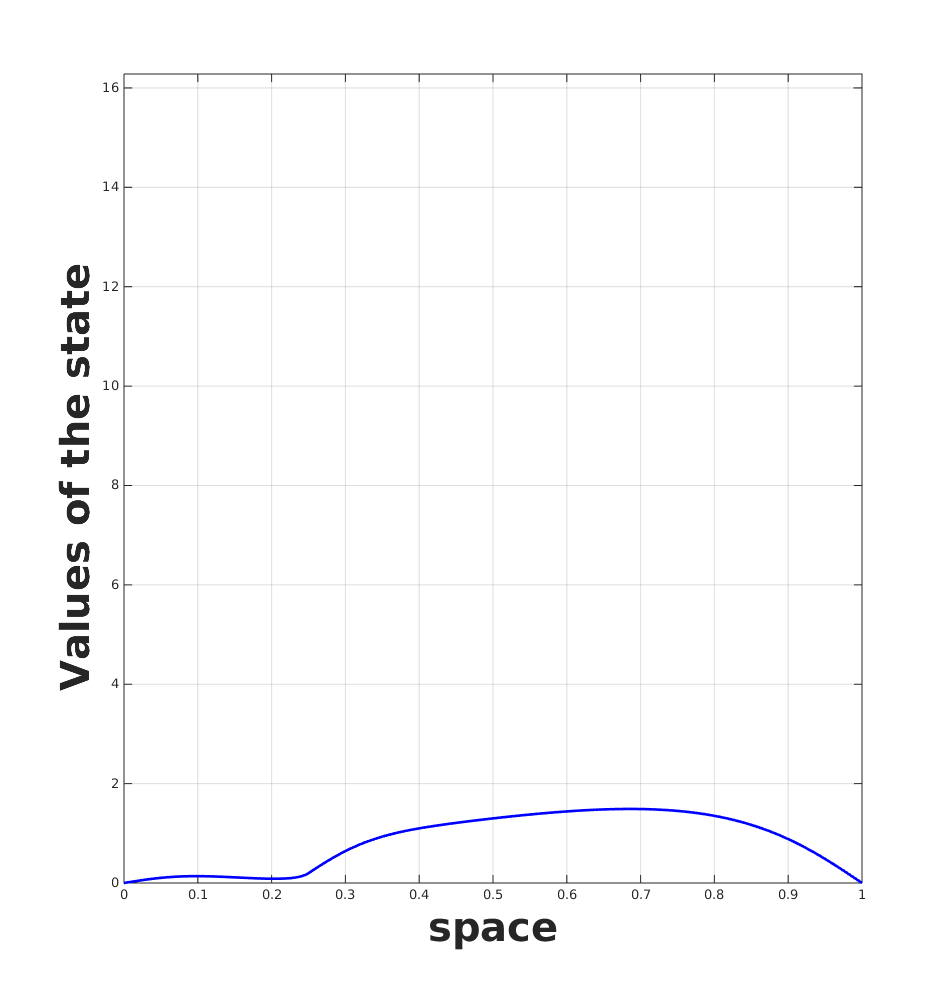} \\
			\includegraphics[trim = 1cm 0cm 2cm 1cm, clip, scale=0.12]{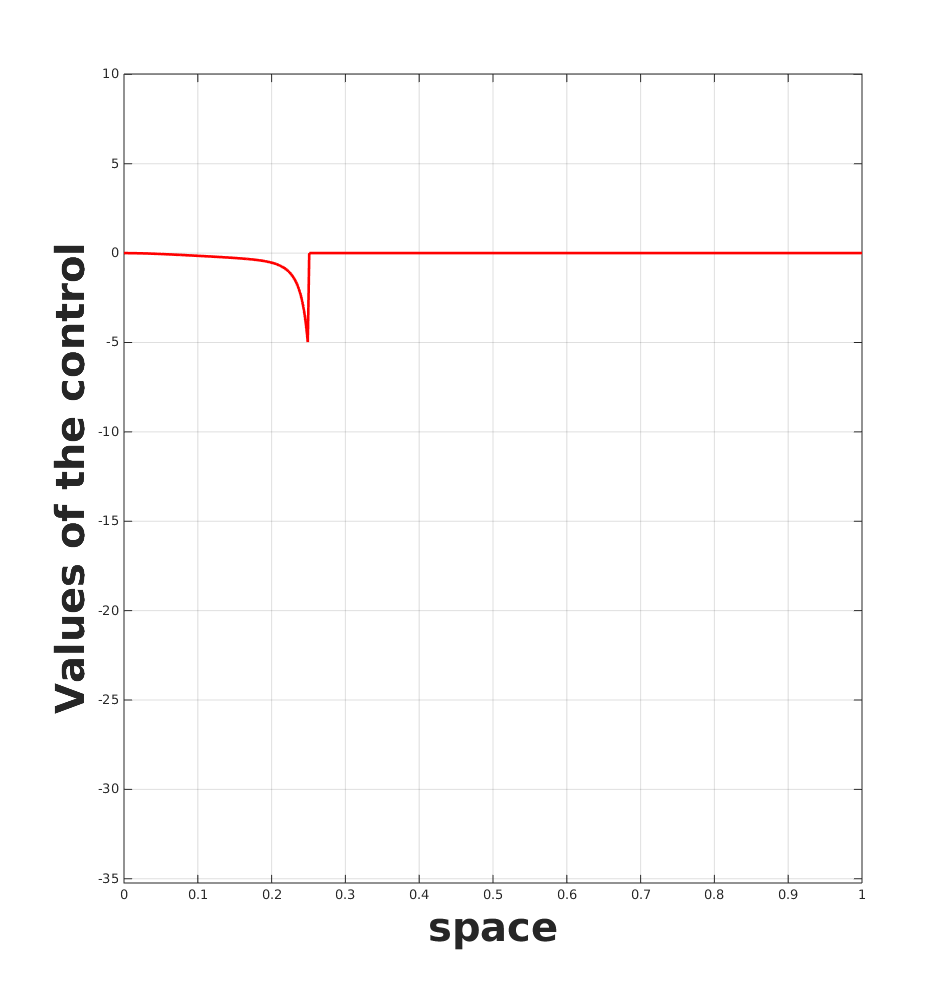} 
			\begin{center}\begin{small} $ t = 2.7184 $ \end{small}\end{center}
		\end{figure}
	\end{minipage}
	\hspace{-0.01\linewidth}
	\begin{minipage}{0.20\linewidth}
		\begin{figure}[H]
			\includegraphics[trim = 1cm 0cm 2cm 1cm, clip, scale=0.12]{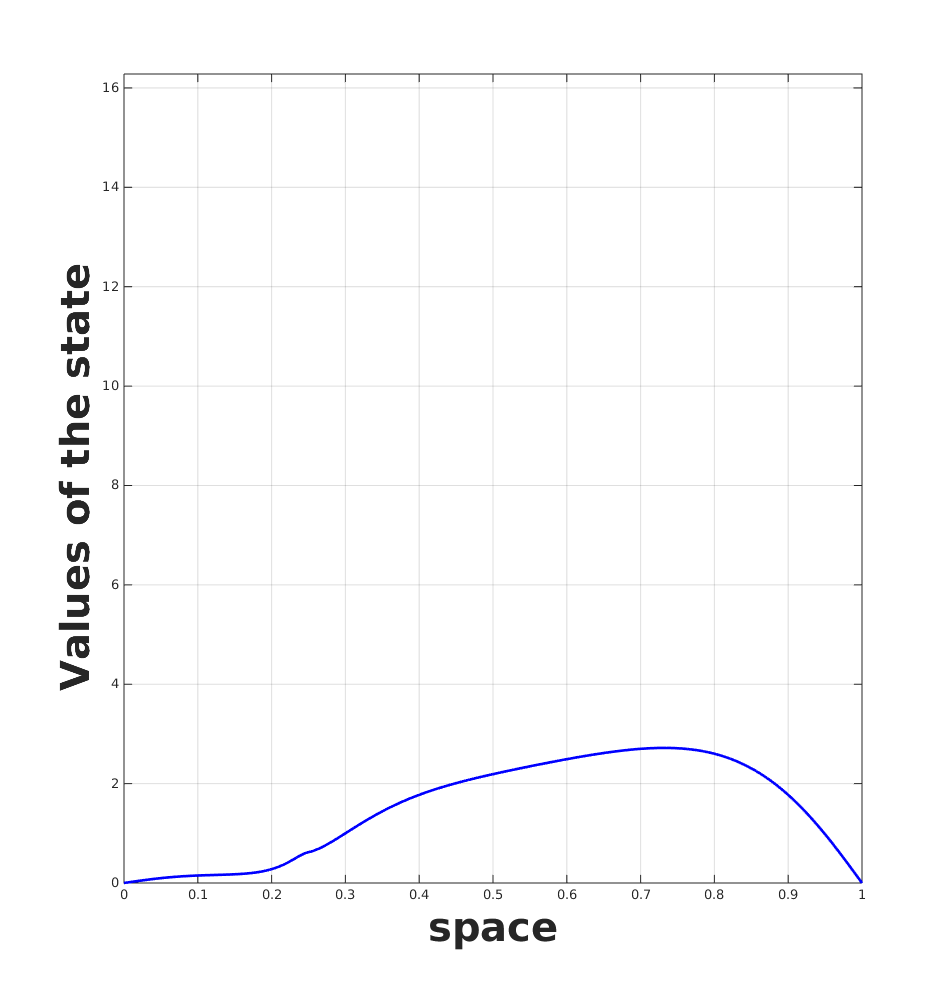} \\
			\includegraphics[trim = 1cm 0cm 2cm 1cm, clip, scale=0.12]{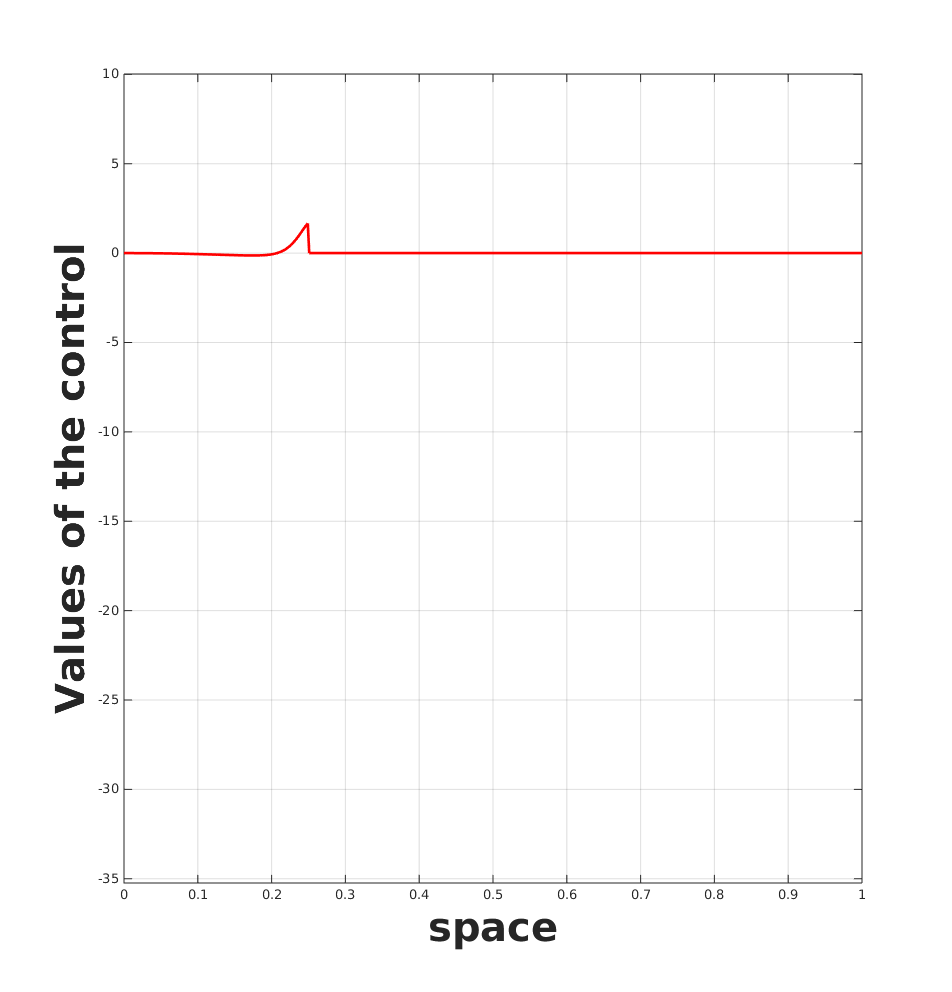} 
			\begin{center}\begin{small} $ t = 6.2523 $ \end{small}\end{center}
		\end{figure}
	\end{minipage}
	\hspace{-0.01\linewidth}
	\begin{minipage}{0.20\linewidth}
		\begin{figure}[H]
			\includegraphics[trim = 1cm 0cm 2cm 1cm, clip, scale=0.12]{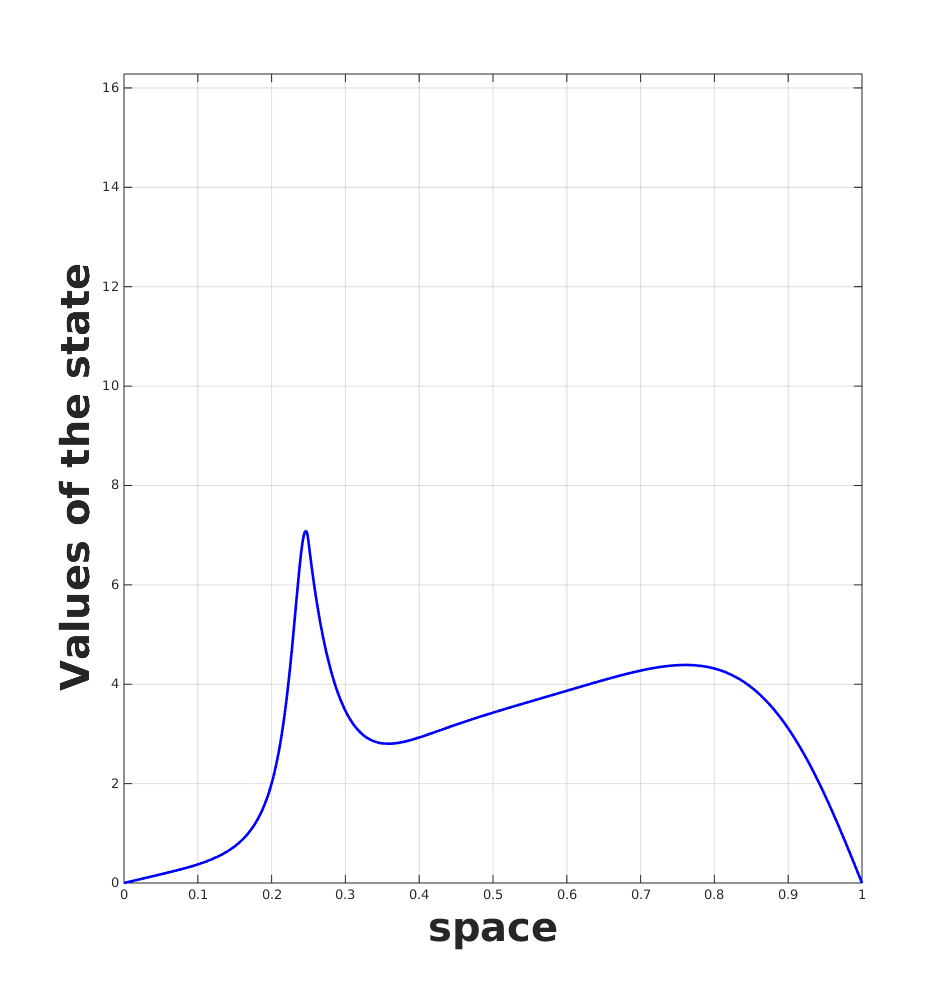} \\
			\includegraphics[trim = 1cm 0cm 2cm 1cm, clip, scale=0.12]{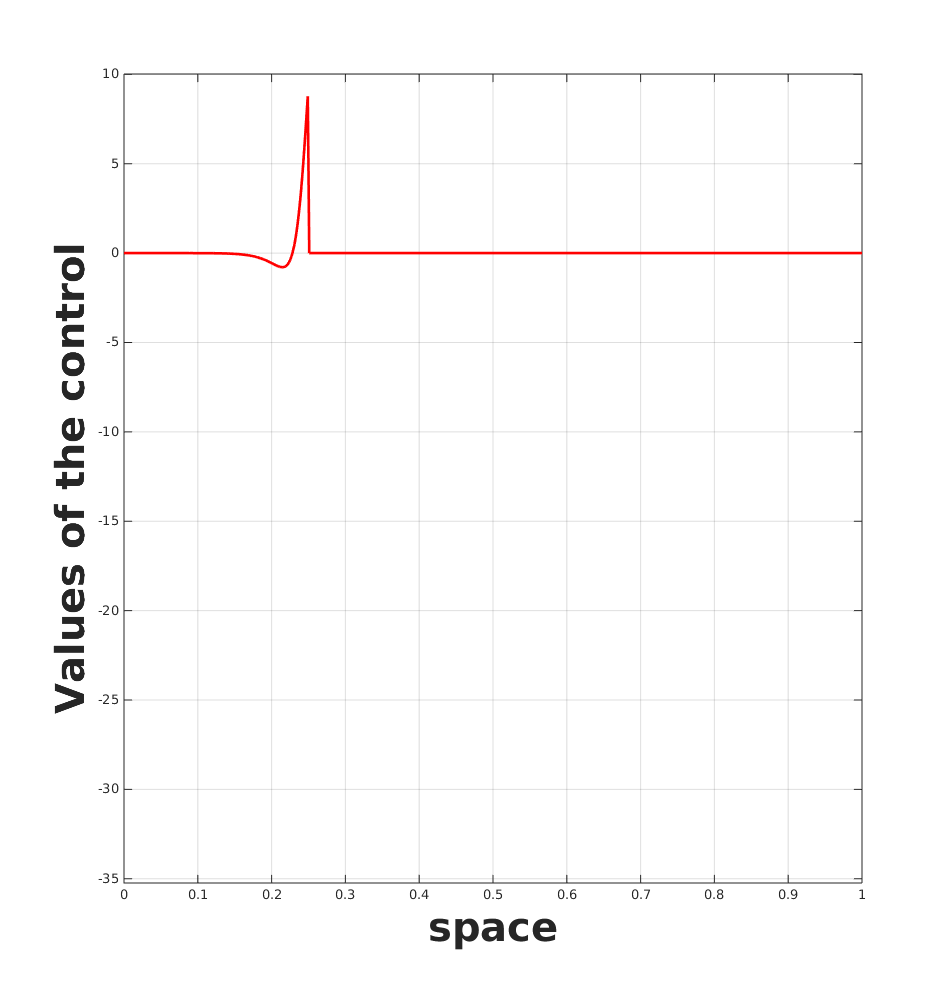} 
			\begin{center}\begin{small} $ t = 9.5144 $ \end{small}\end{center}
		\end{figure}
	\end{minipage}	
	\hspace{-0.01\linewidth}
	\begin{minipage}{0.20\linewidth}
		\begin{figure}[H]
			\includegraphics[trim = 1cm 0cm 2cm 1cm, clip, scale=0.12]{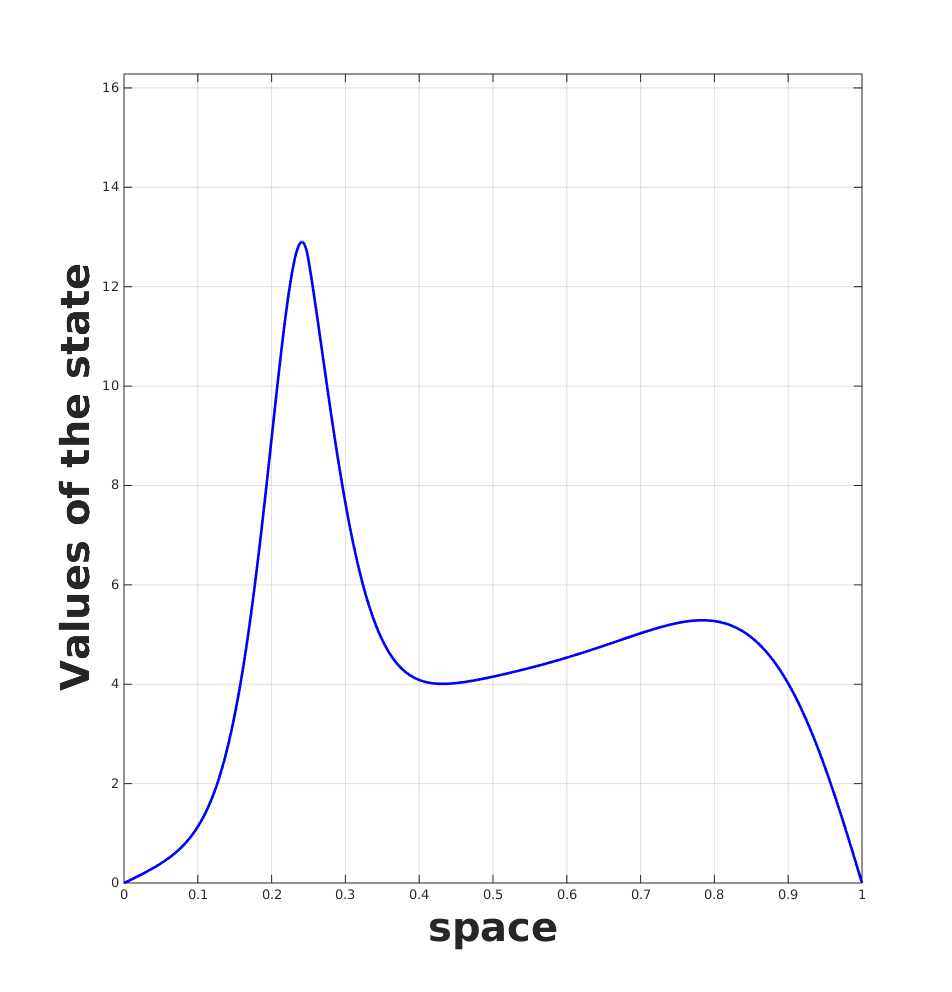} \\
			\includegraphics[trim = 1cm 0cm 2cm 1cm, clip, scale=0.12]{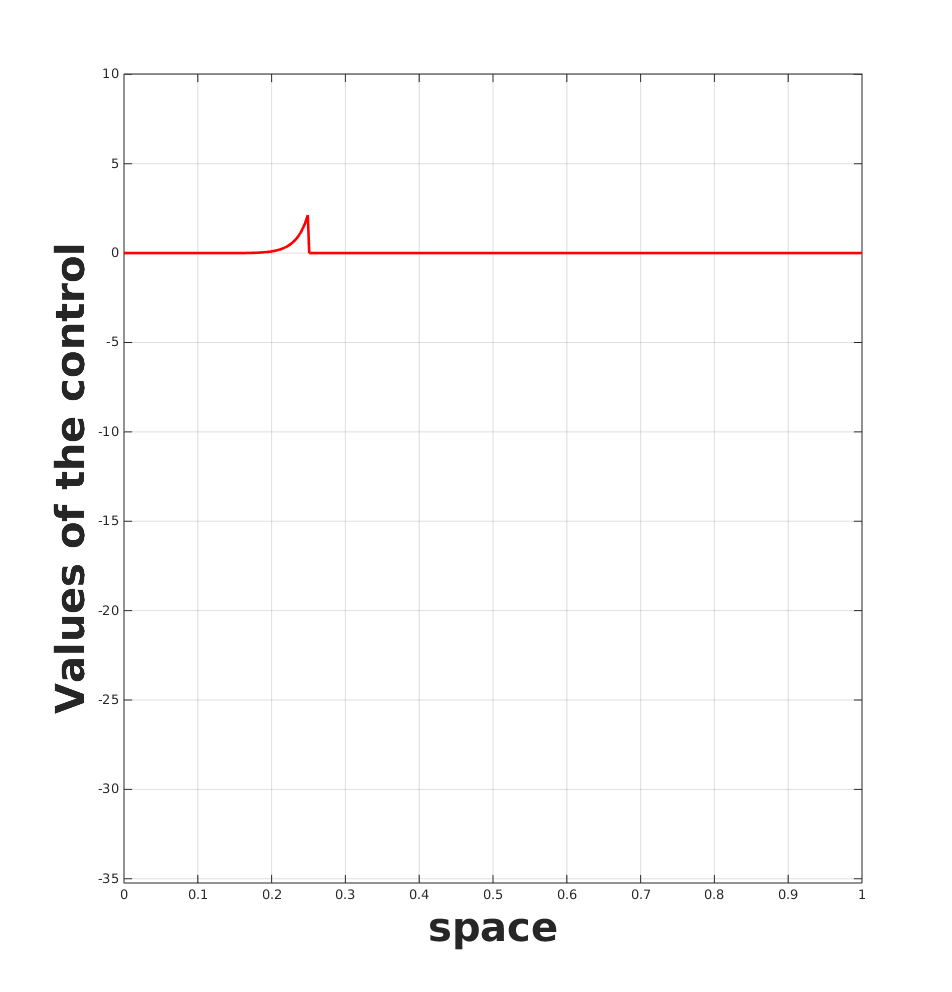} 
			\begin{center}\begin{small} $ t = 11.4172 $ \end{small}\end{center}
		\end{figure}
	\end{minipage}	
	\\ \vspace{-10pt}
	%\end{minipage}
	%\begin{minipage}{\linewidth}
	\hspace{-0.04\linewidth}%\centering
	\begin{minipage}{0.20\linewidth}
		\begin{figure}[H]
			\includegraphics[trim = 1cm 0cm 2cm 1cm, clip, scale=0.12]{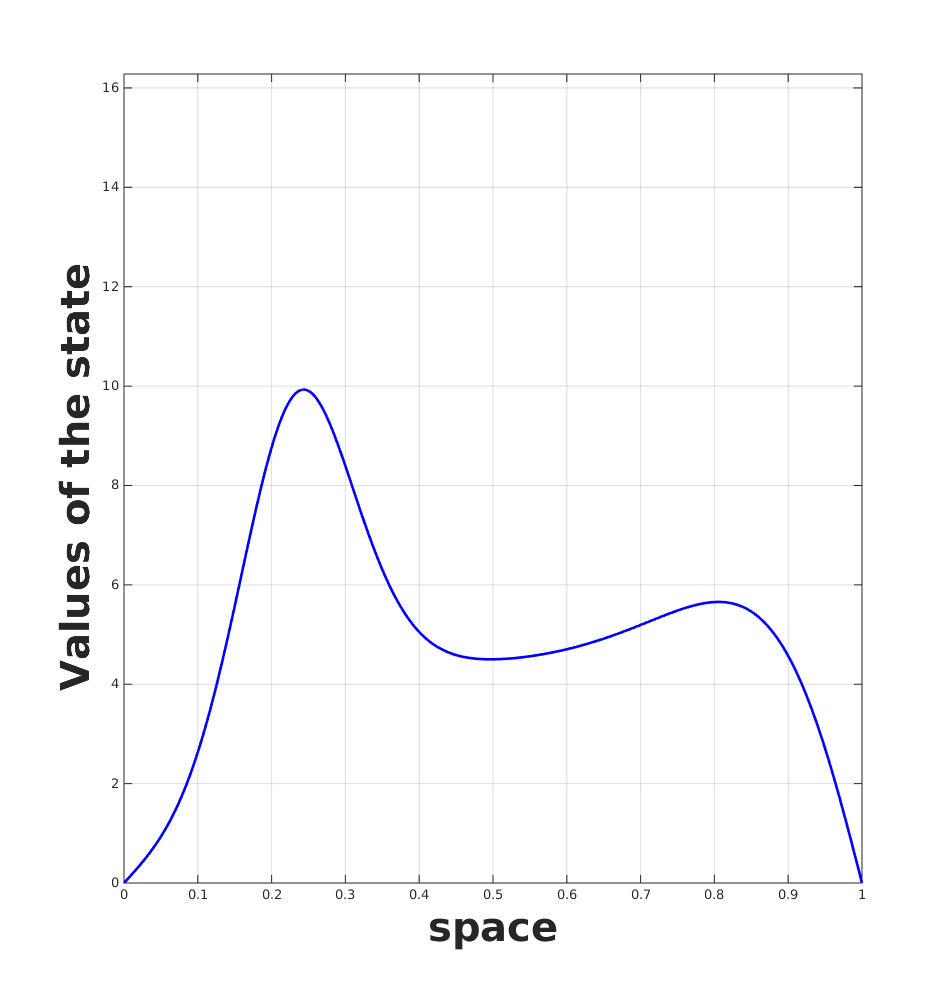} \\
			\includegraphics[trim = 1cm 0cm 2cm 1cm, clip, scale=0.12]{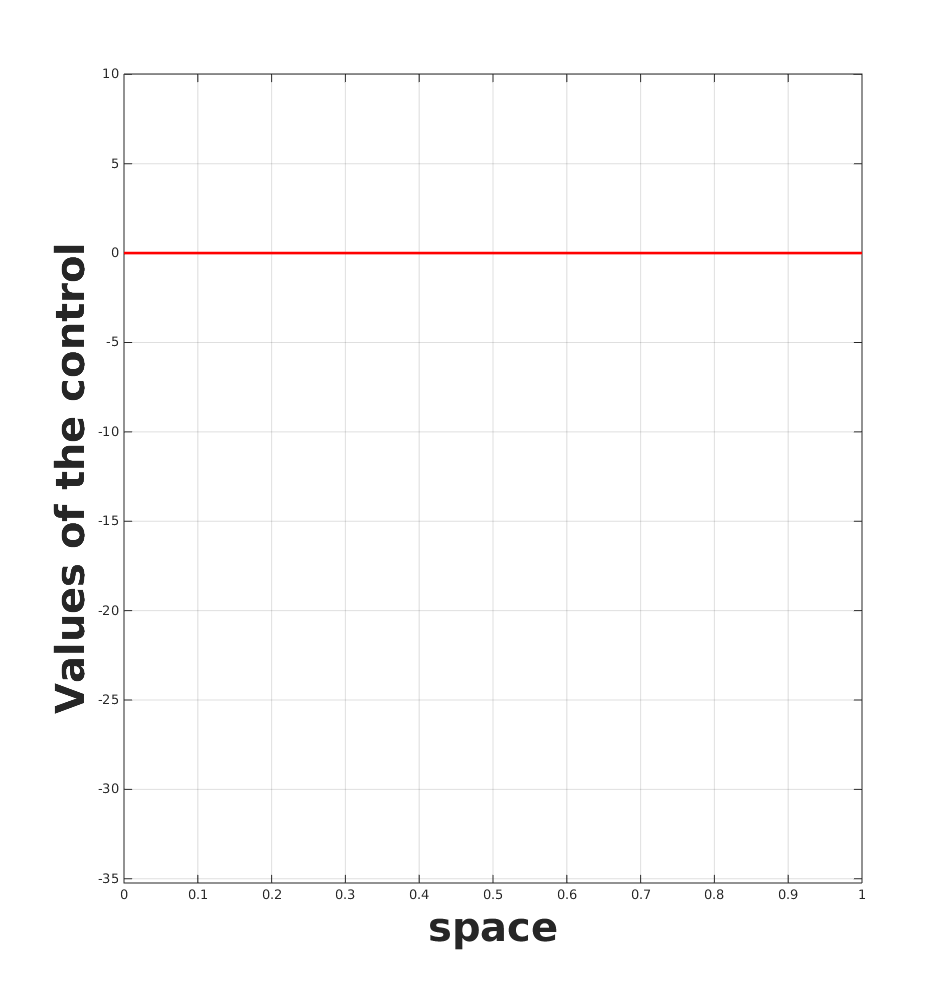}
			\begin{center}\begin{small} $ t = 12.7764 $ \end{small}\end{center}
		\end{figure}
	\end{minipage}
	\hspace{-0.01\linewidth}
	\begin{minipage}{0.20\linewidth}
		\begin{figure}[H]
			\includegraphics[trim = 1cm 0cm 2cm 1cm, clip, scale=0.12]{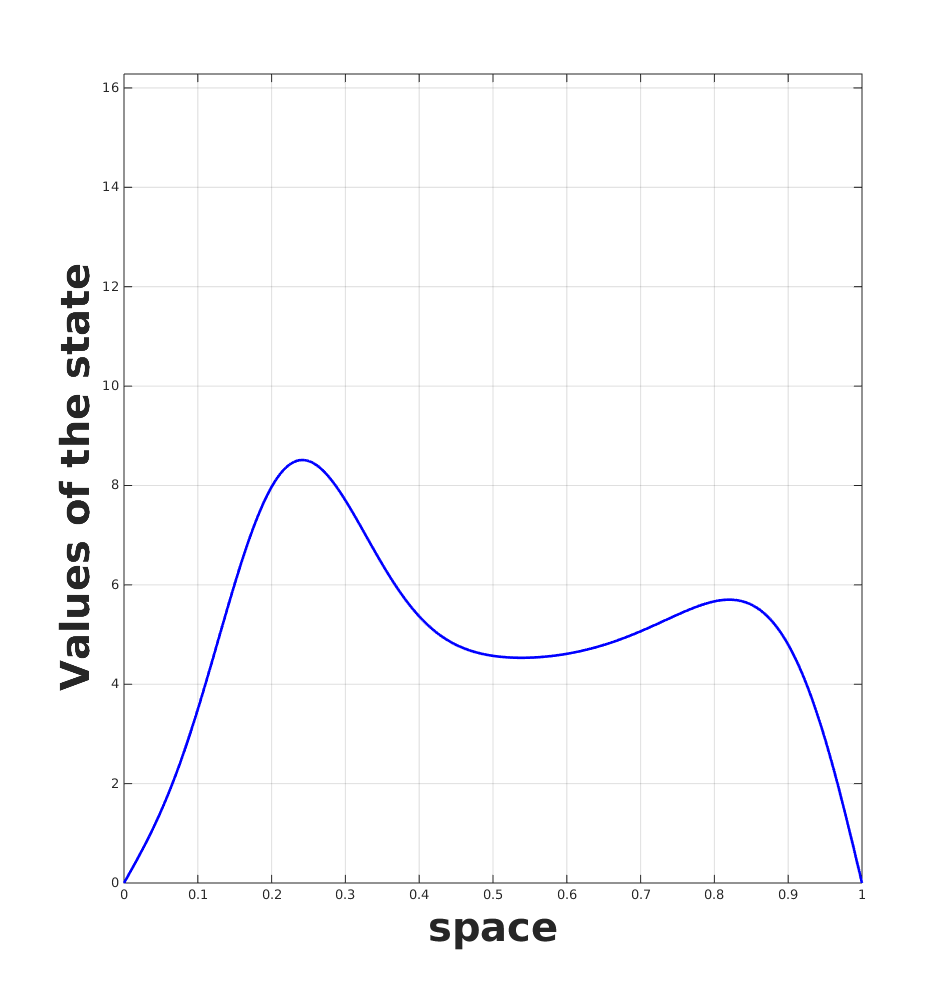} \\
			\includegraphics[trim = 1cm 0cm 2cm 1cm, clip, scale=0.12]{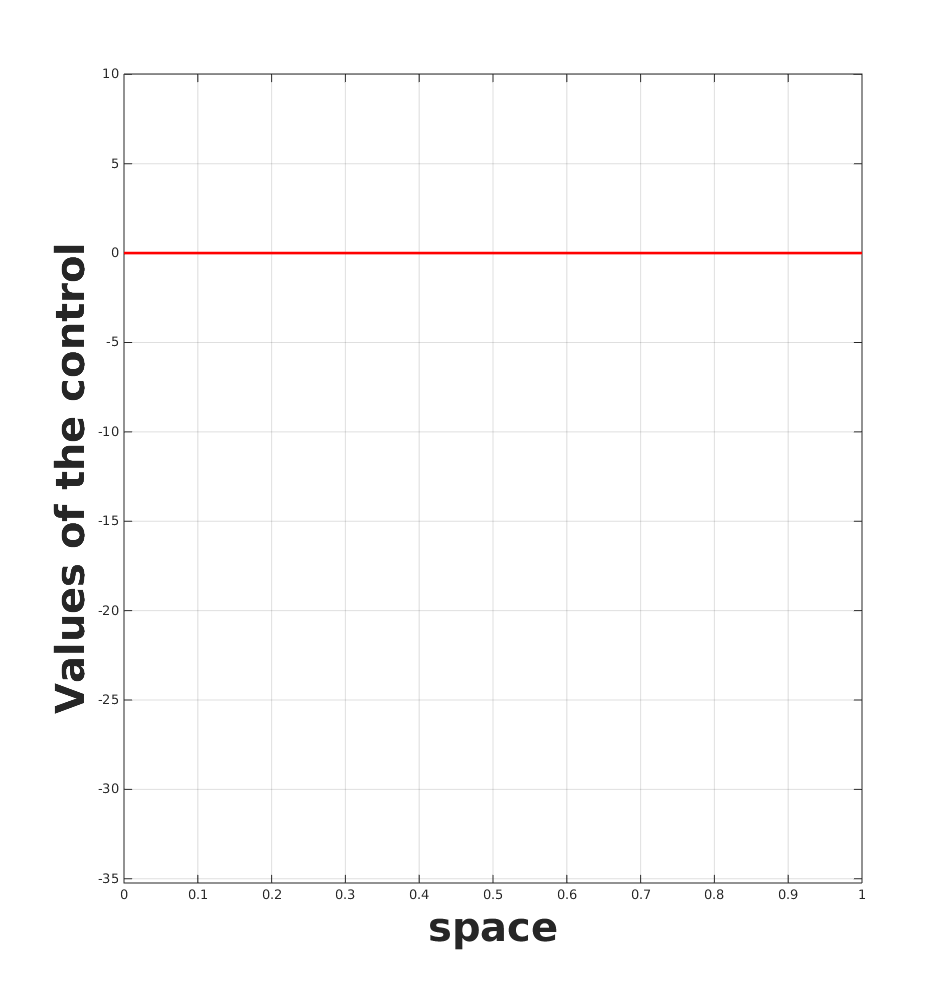} 
			\begin{center}\begin{small} $ t = 13.5919 \approx \tau $ \end{small}\end{center}
		\end{figure}
	\end{minipage}
	\hspace{-0.01\linewidth}
	\begin{minipage}{0.20\linewidth}
		\begin{figure}[H]
			\includegraphics[trim = 1cm 0cm 2cm 1cm, clip, scale=0.12]{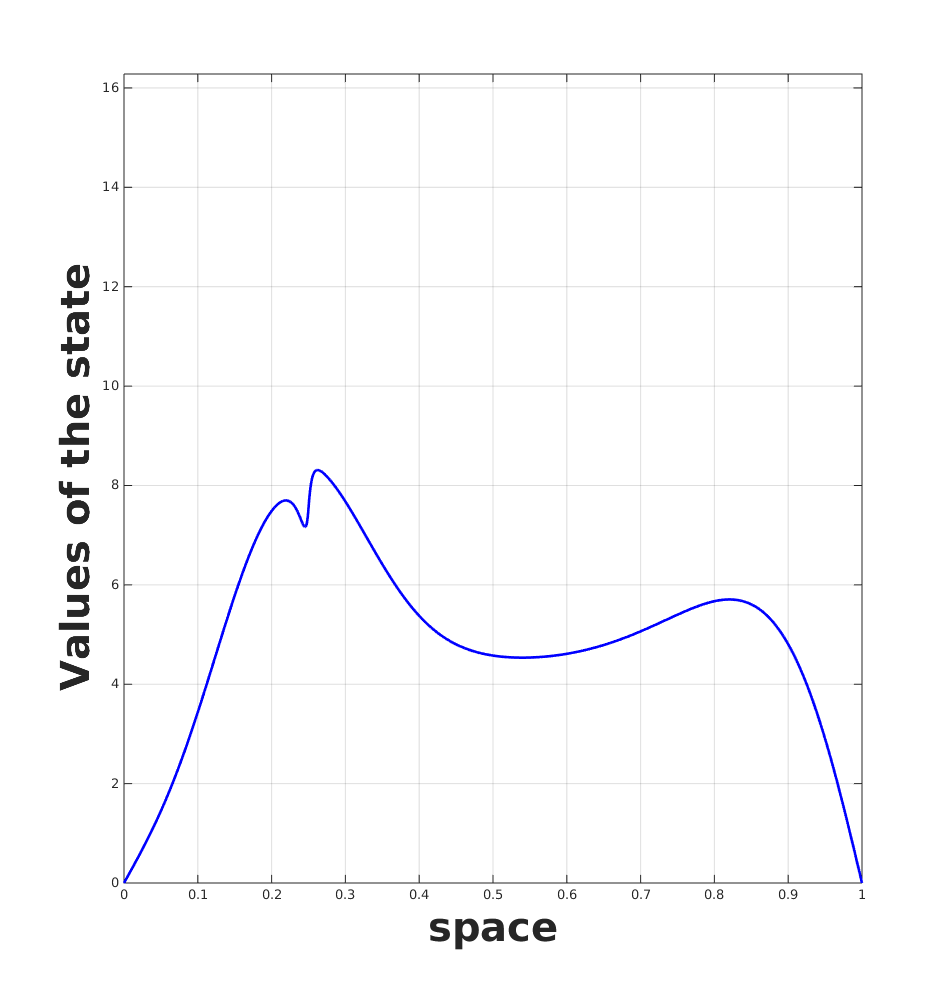} \\
			\includegraphics[trim = 1cm 0cm 2cm 1cm, clip, scale=0.12]{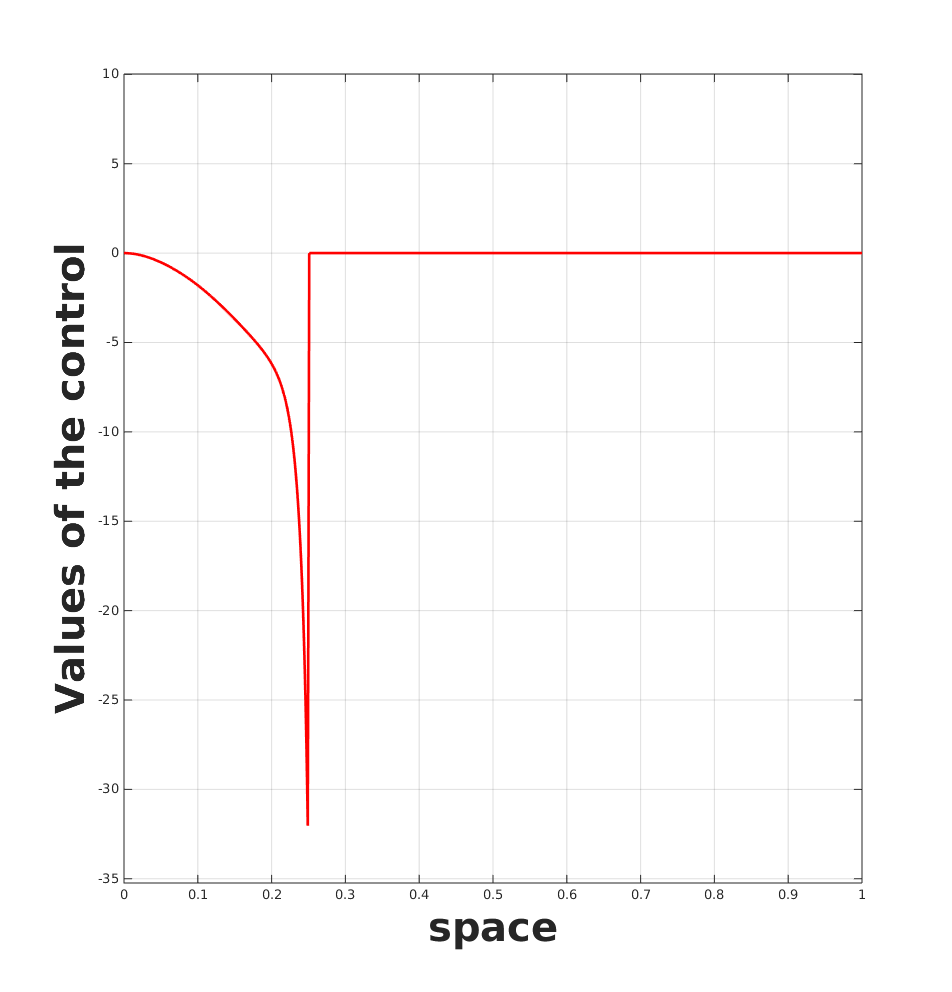} 
			\begin{center}\begin{small} $ t = 13.6248 $ \end{small}\end{center}
		\end{figure}
	\end{minipage}
	\hspace{-0.01\linewidth}
	\begin{minipage}{0.20\linewidth}
		\begin{figure}[H]
			\includegraphics[trim = 1cm 0cm 2cm 1cm, clip, scale=0.12]{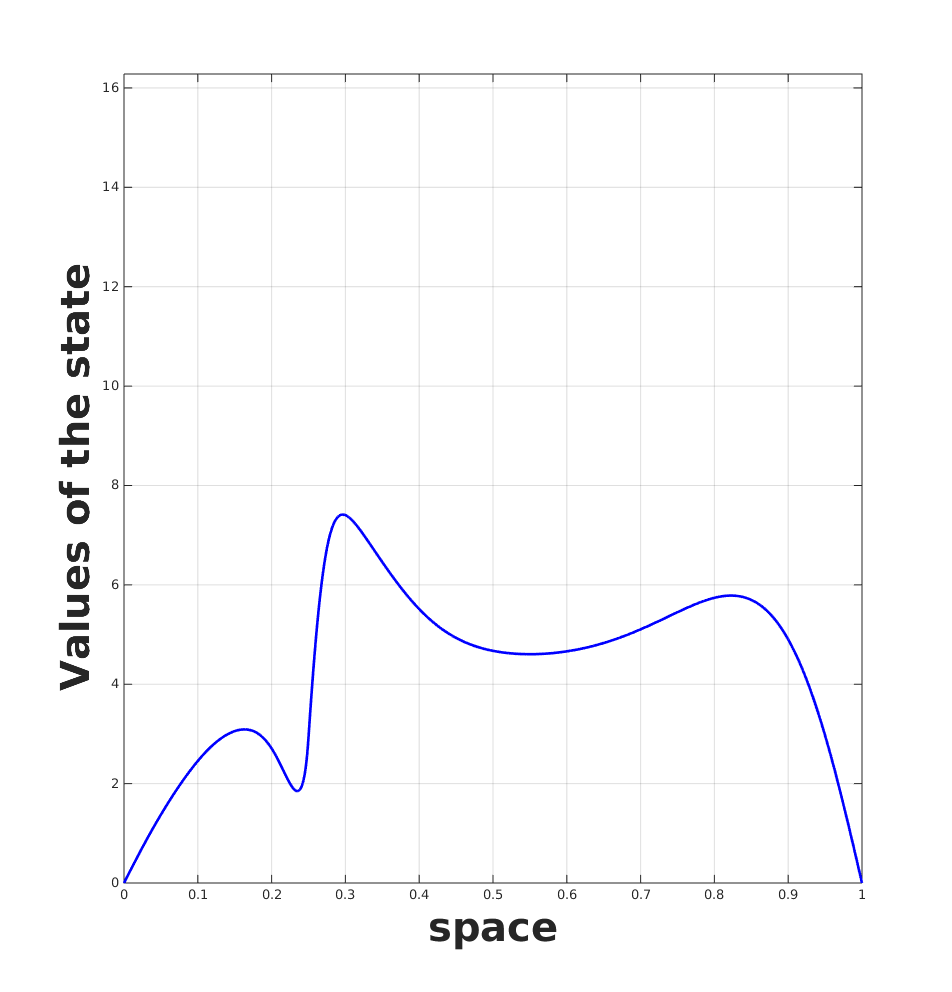} \\
			\includegraphics[trim = 1cm 0cm 2cm 1cm, clip, scale=0.12]{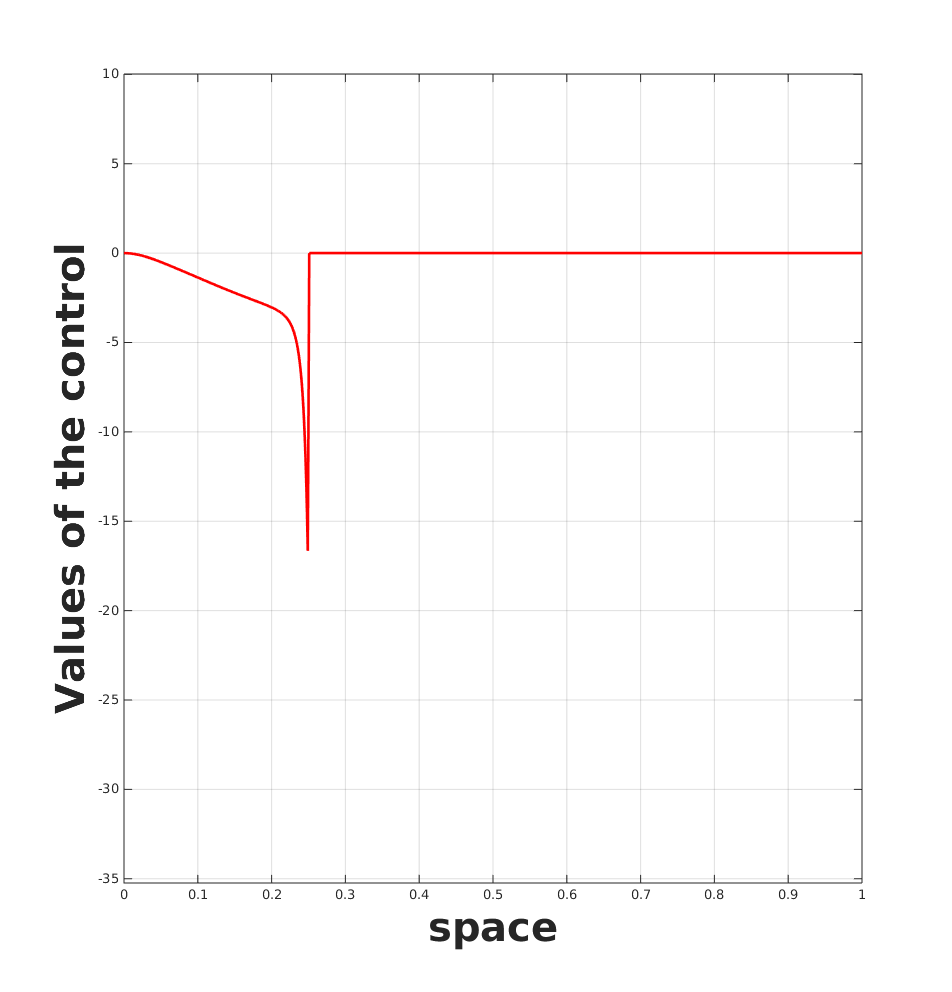} 
			\begin{center}\begin{small} $ t = 13.9201 $ \end{small}\end{center}
		\end{figure}
	\end{minipage}	
	\hspace{-0.01\linewidth}
	\begin{minipage}{0.20\linewidth}
		\begin{figure}[H]
			\includegraphics[trim = 1cm 0cm 2cm 1cm, clip, scale=0.12]{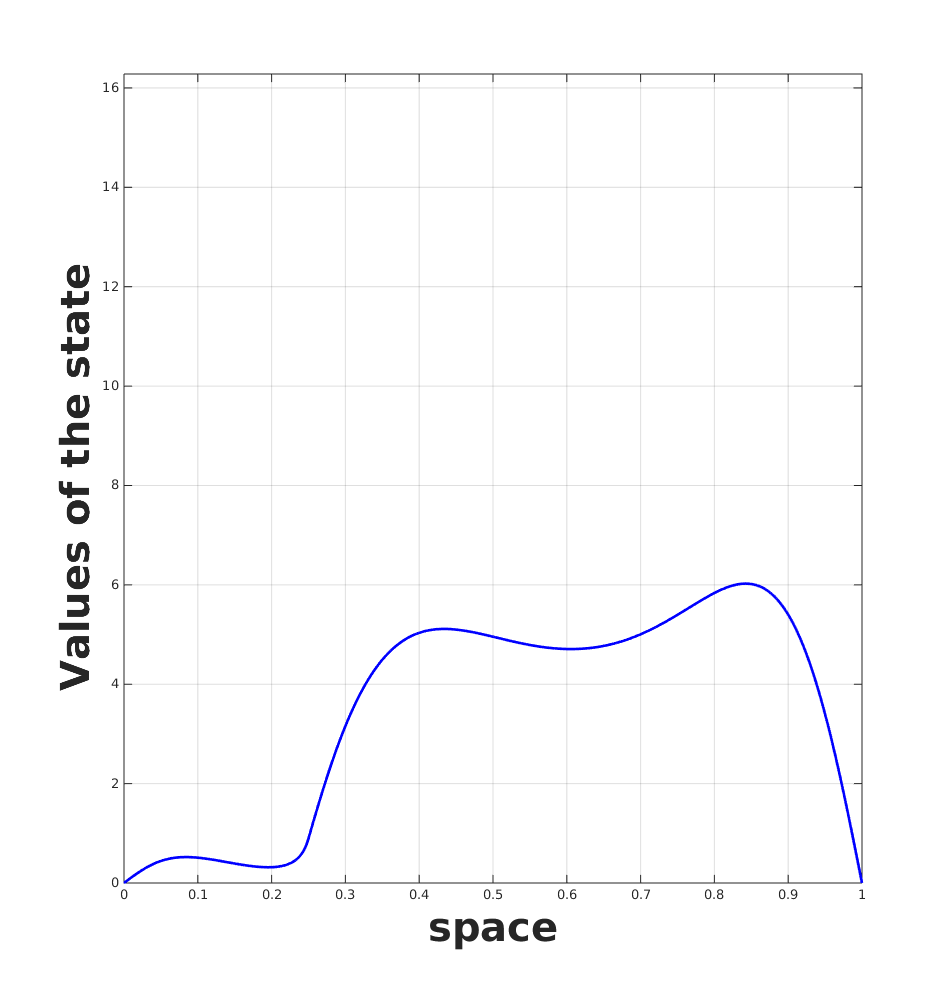} \\
			\includegraphics[trim = 1cm 0cm 2cm 1cm, clip, scale=0.12]{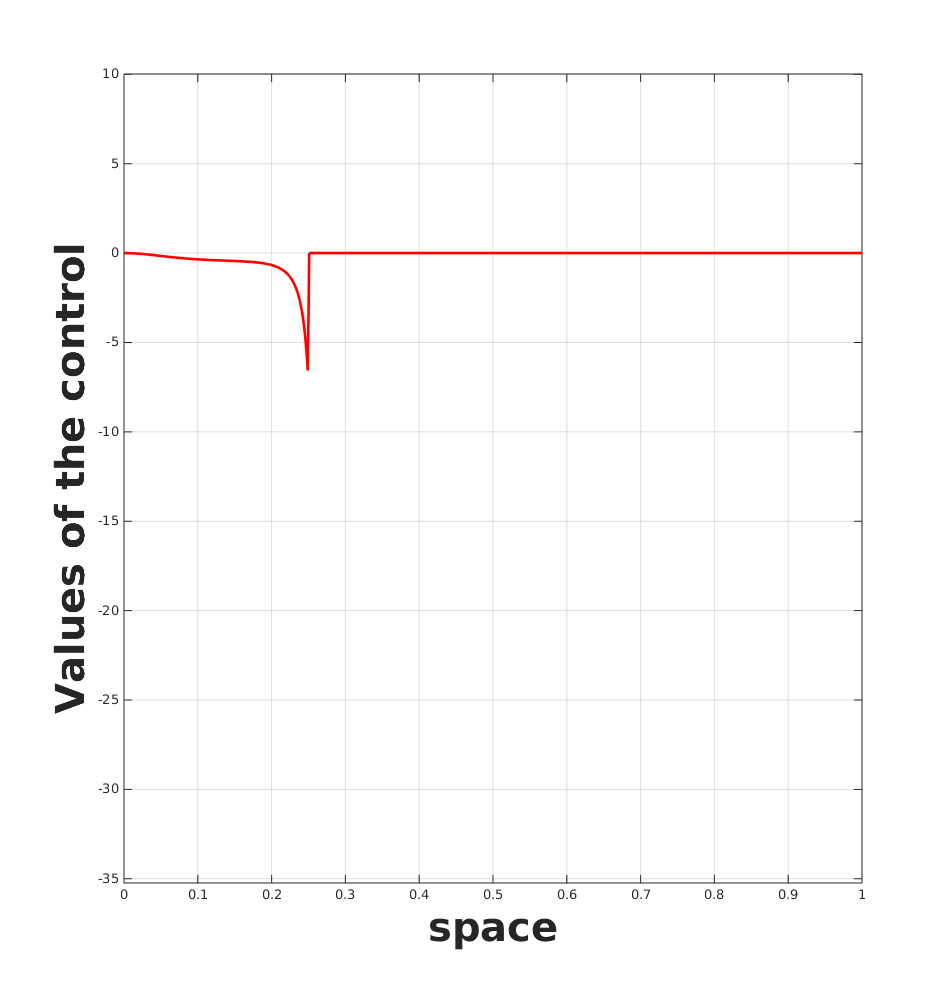} 
			\begin{center}\begin{small} $ t = 16.8736 $ \end{small}\end{center}
		\end{figure}
	\end{minipage}
	\\ \vspace{-10pt}
	%\end{minipage}
	%\begin{minipage}{\linewidth}
	\hspace{-0.04\linewidth}%\centering
	\begin{minipage}{0.20\linewidth}
		\begin{figure}[H]
			\includegraphics[trim = 1cm 0cm 2cm 1cm, clip, scale=0.12]{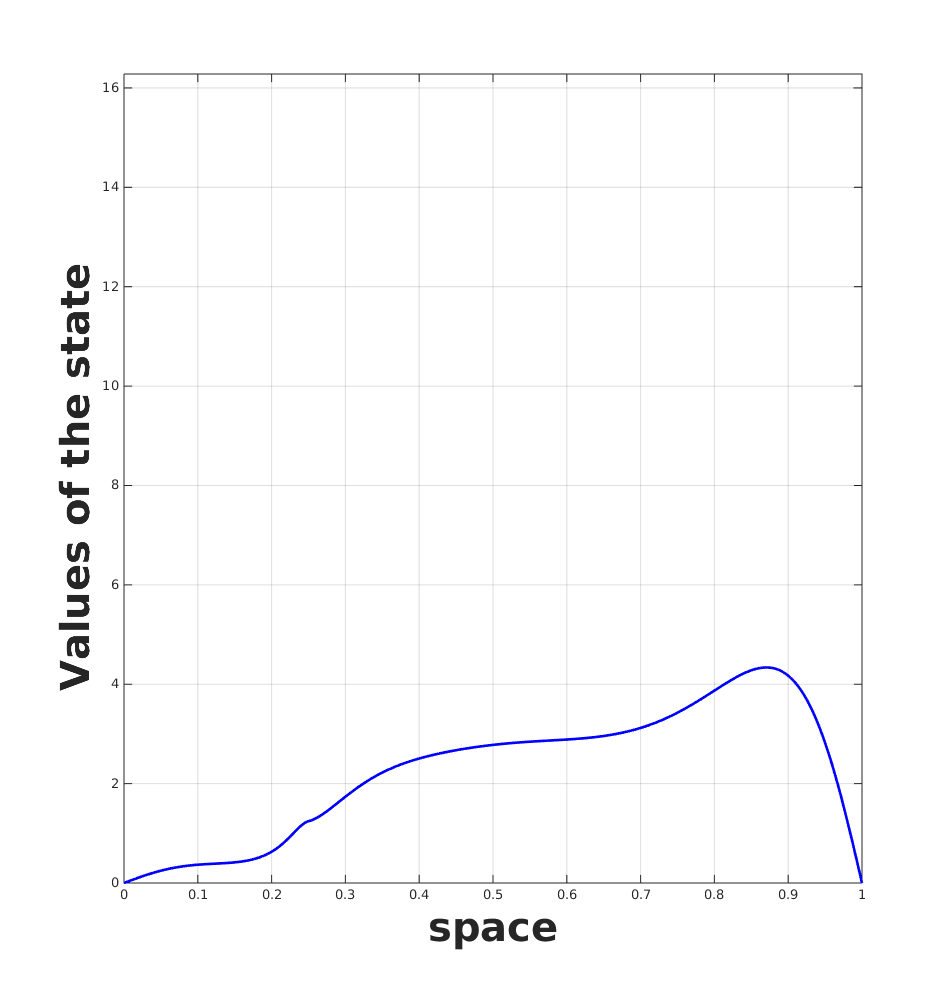} \\
			\includegraphics[trim = 1cm 0cm 2cm 1cm, clip, scale=0.12]{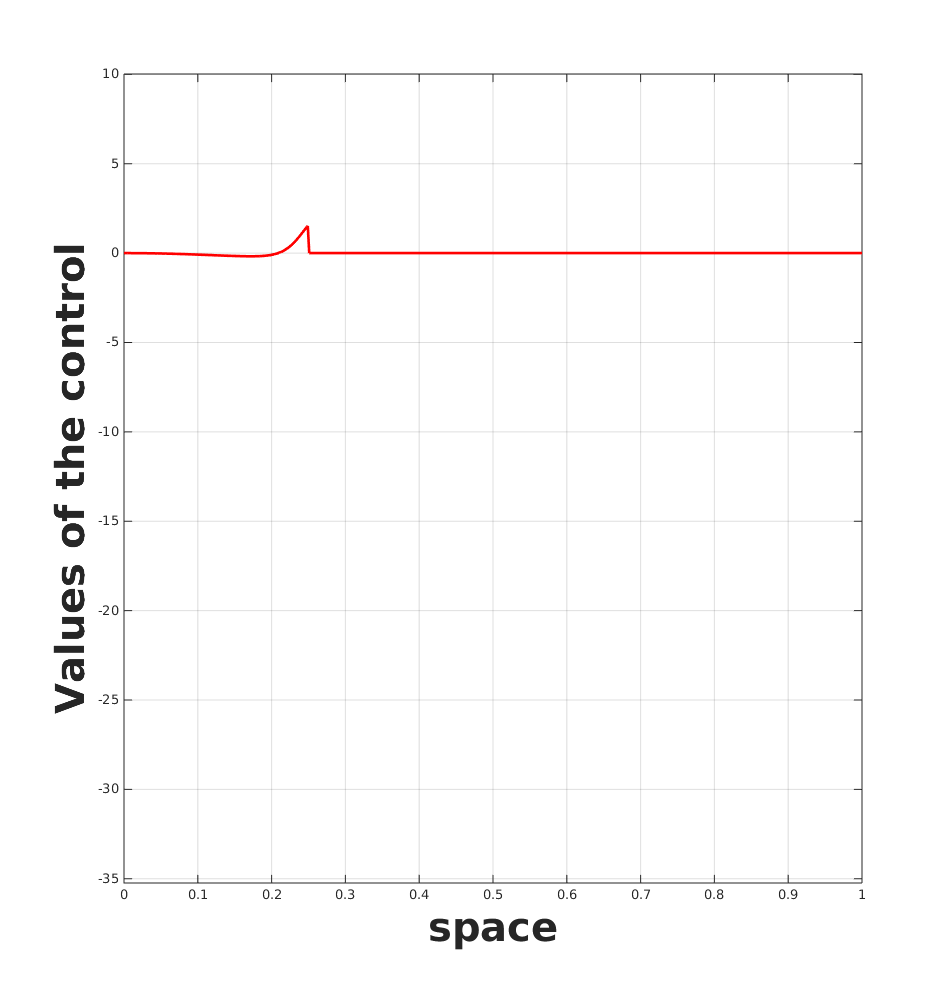}
			\begin{center}\begin{small} $ t = 21.7960 $ \end{small}\end{center}
		\end{figure}
	\end{minipage}
	\hspace{-0.01\linewidth}
	\begin{minipage}{0.20\linewidth}
		\begin{figure}[H]
			\includegraphics[trim = 1cm 0cm 2cm 1cm, clip, scale=0.12]{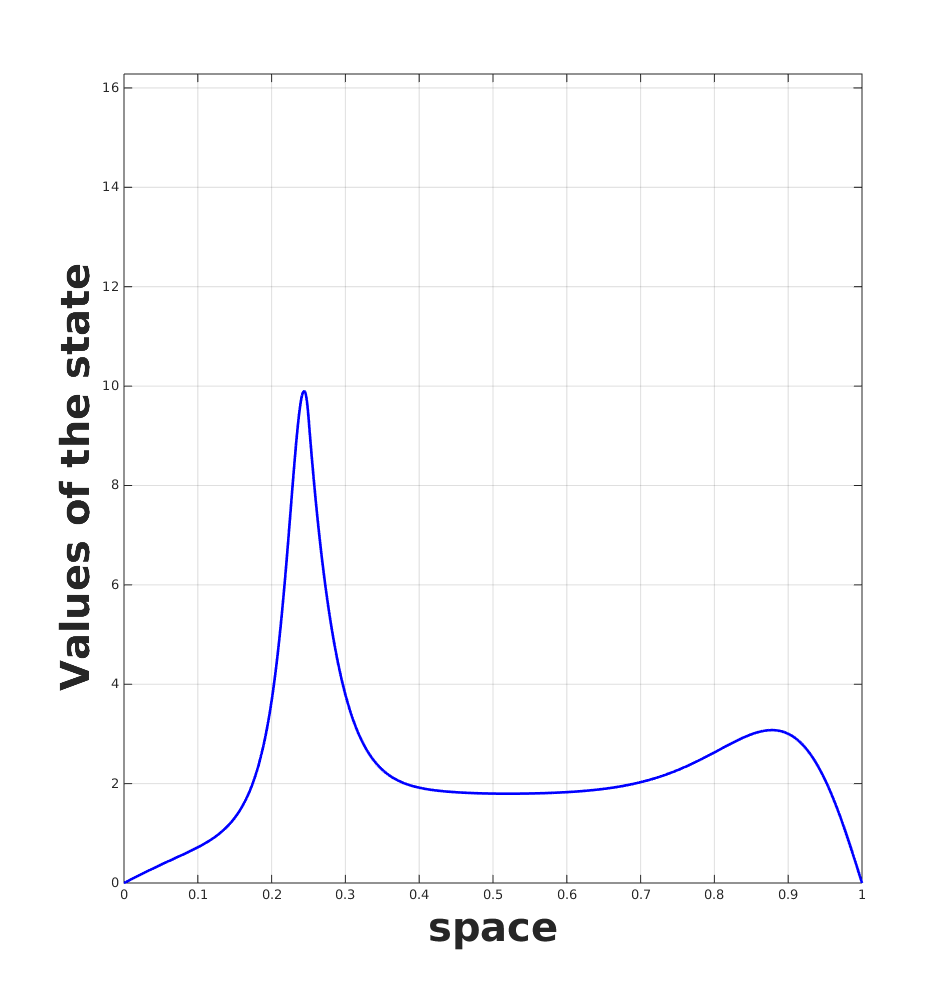} \\
			\includegraphics[trim = 1cm 0cm 2cm 1cm, clip, scale=0.12]{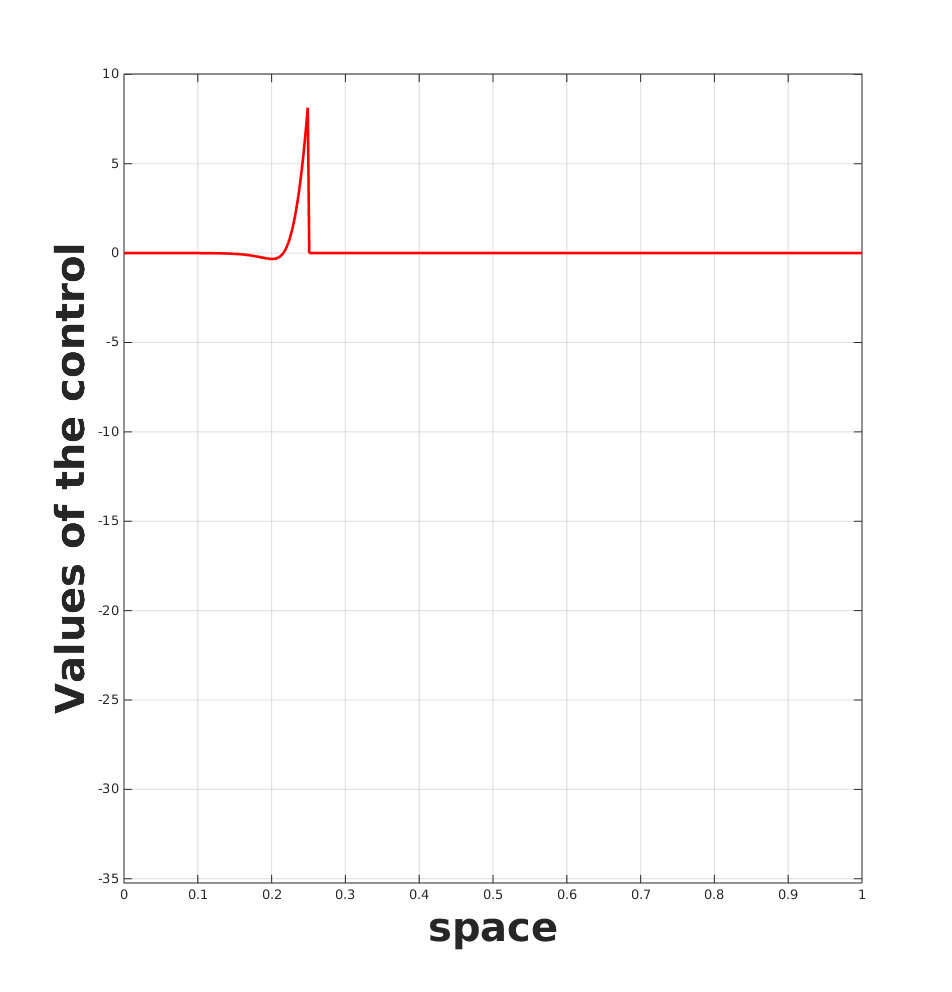} 
			\begin{center}\begin{small} $ t = 25.0776 $ \end{small}\end{center}
		\end{figure}
	\end{minipage}
	\hspace{-0.01\linewidth}
	\begin{minipage}{0.20\linewidth}
		\begin{figure}[H]
			\includegraphics[trim = 1cm 0cm 2cm 1cm, clip, scale=0.12]{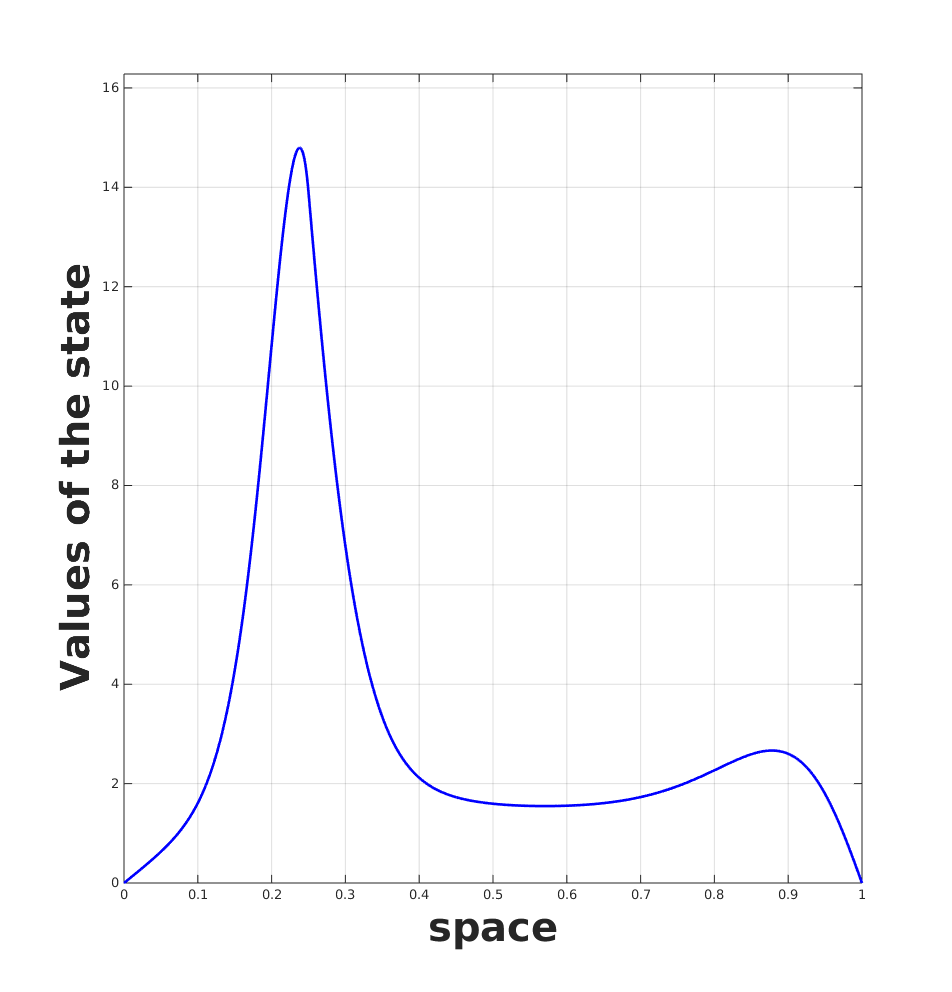} \\
			\includegraphics[trim = 1cm 0cm 2cm 1cm, clip, scale=0.12]{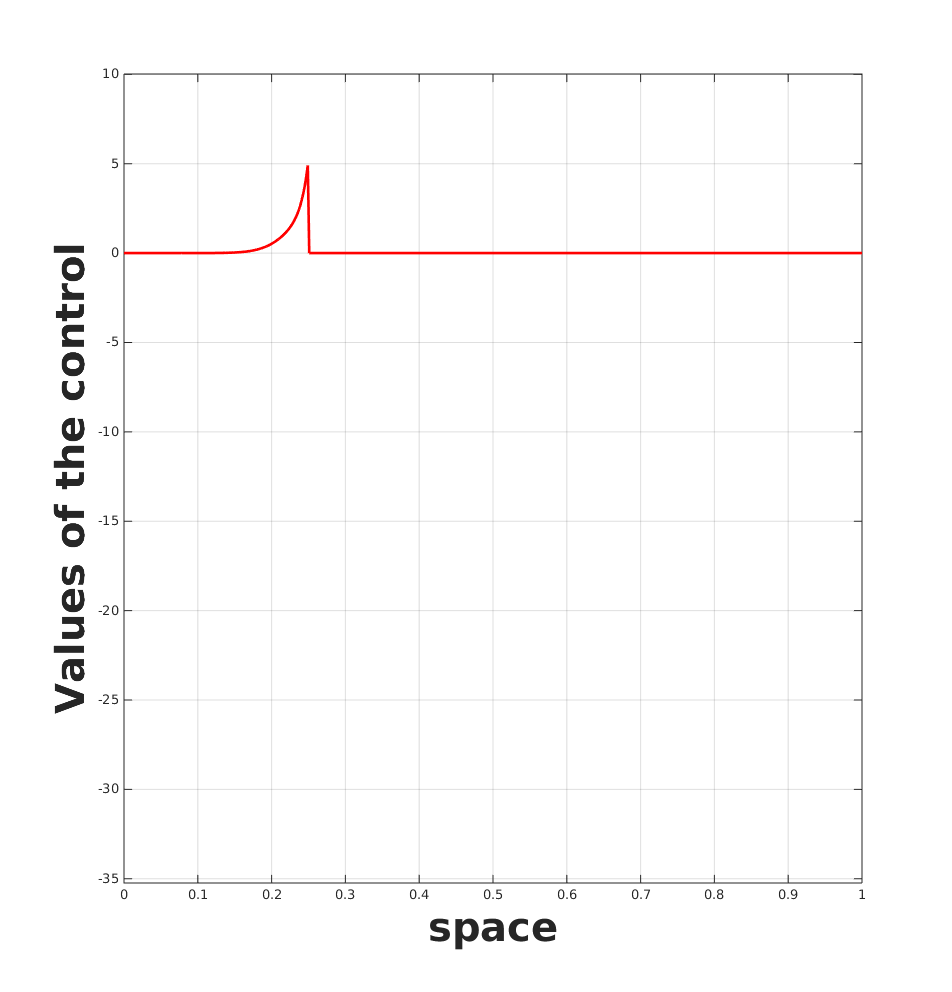} 
			\begin{center}\begin{small} $ t = 26.7184 $ \end{small}\end{center}
		\end{figure}
	\end{minipage}
	\hspace{-0.01\linewidth}
	\begin{minipage}{0.20\linewidth}
		\begin{figure}[H]
			\includegraphics[trim = 1cm 0cm 2cm 1cm, clip, scale=0.12]{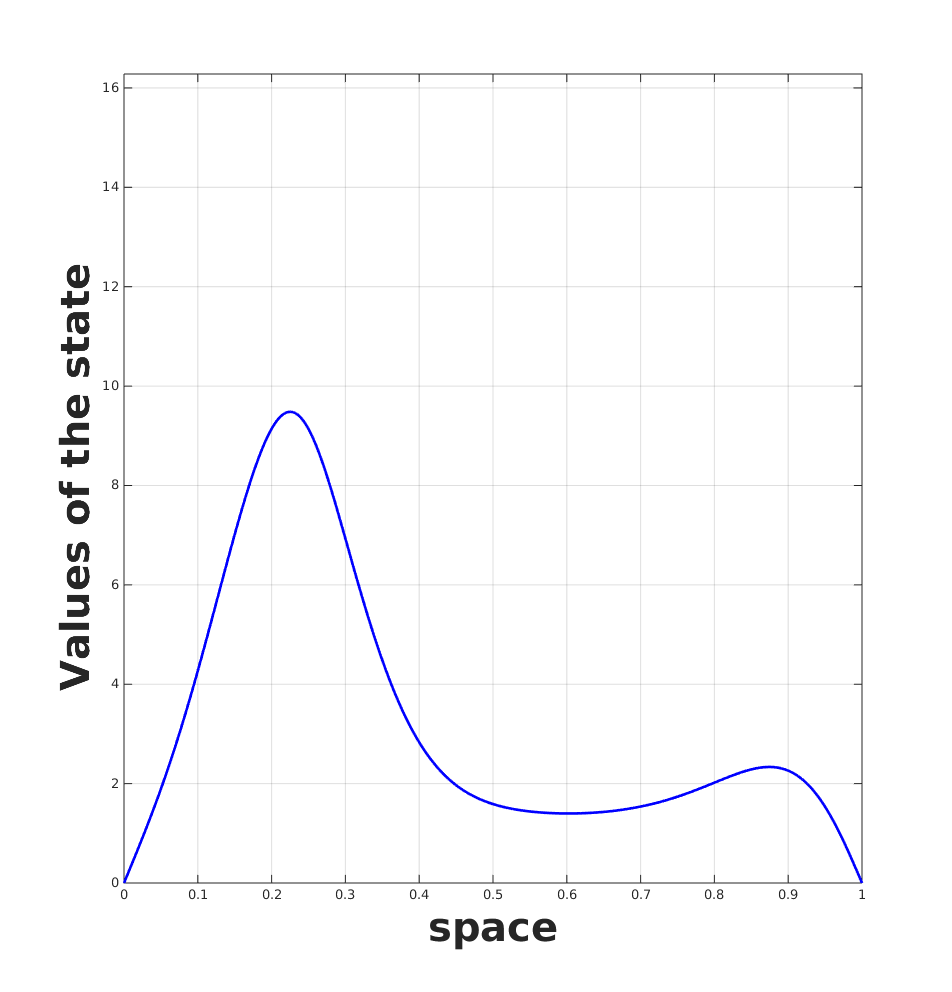} \\
			\includegraphics[trim = 1cm 0cm 2cm 1cm, clip, scale=0.12]{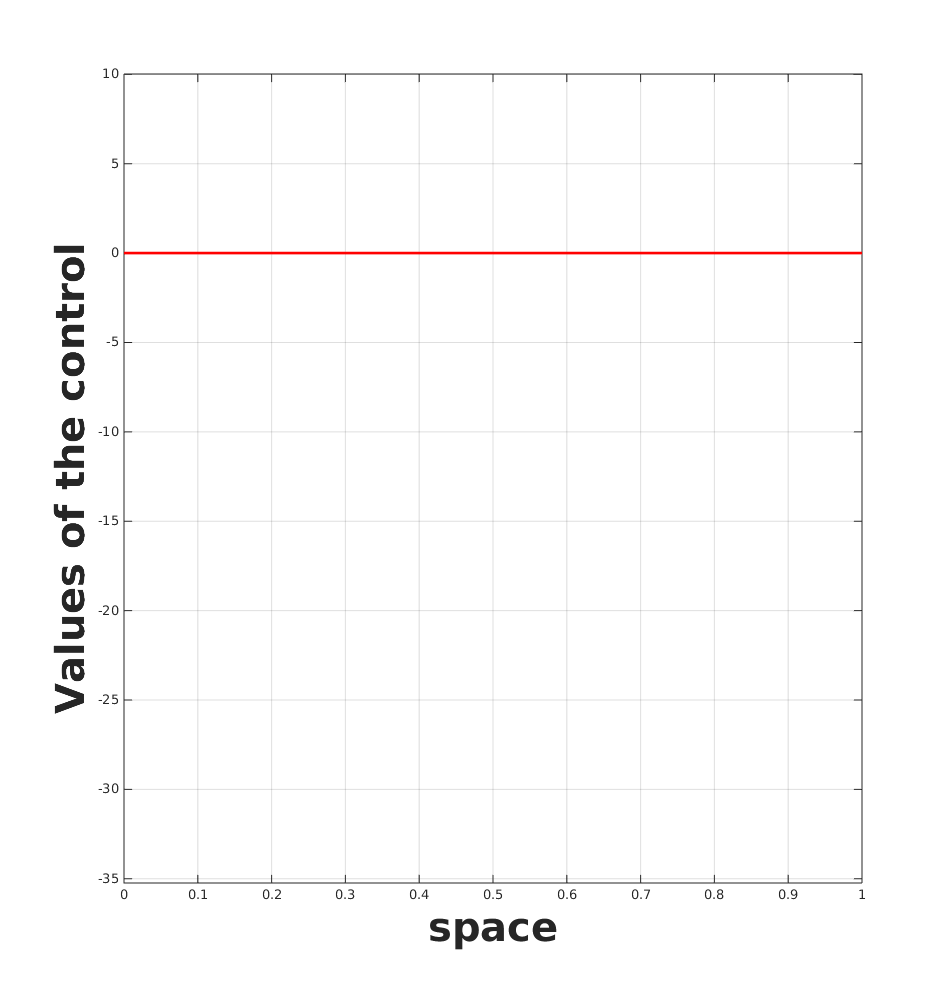} 
			\begin{center}\begin{small} $ t = 29.0155 $ \end{small}\end{center}
		\end{figure}
	\end{minipage}	
	\hspace{-0.01\linewidth}
	\begin{minipage}{0.20\linewidth}
		\begin{figure}[H]
			\includegraphics[trim = 1cm 0cm 2cm 1cm, clip, scale=0.12]{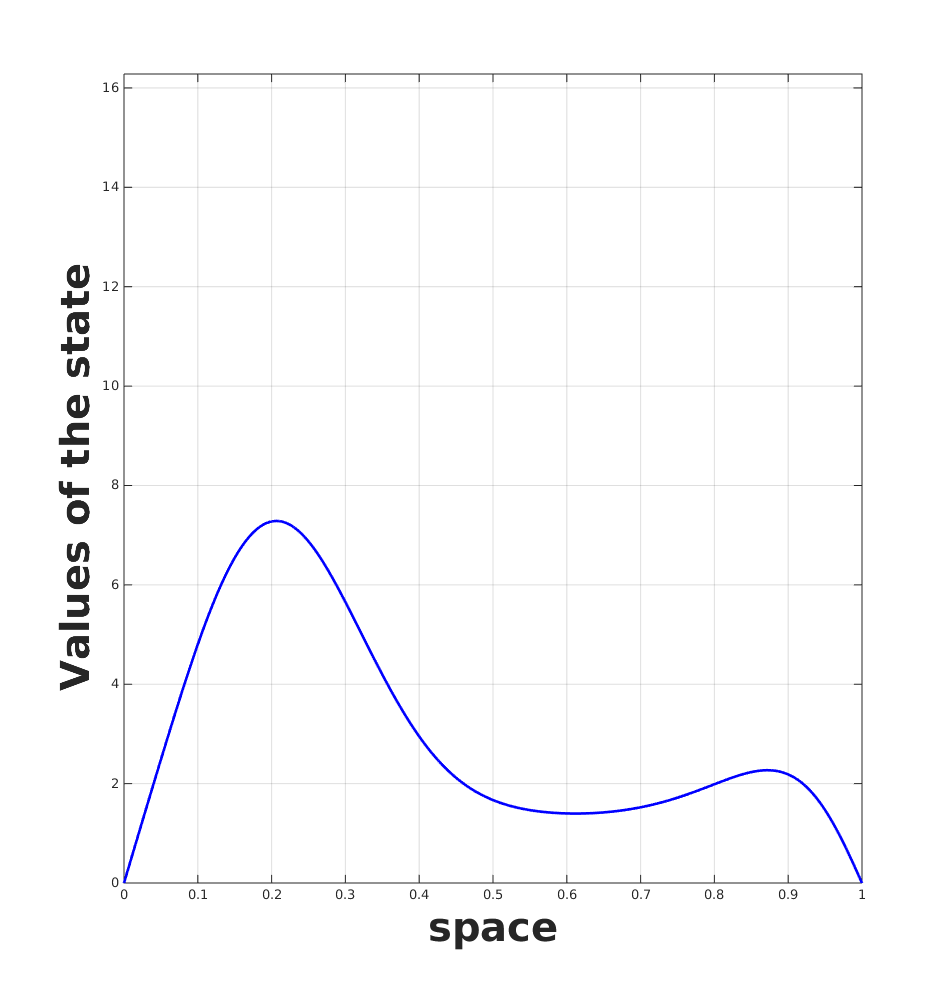} \\
			\includegraphics[trim = 1cm 0cm 2cm 1cm, clip, scale=0.12]{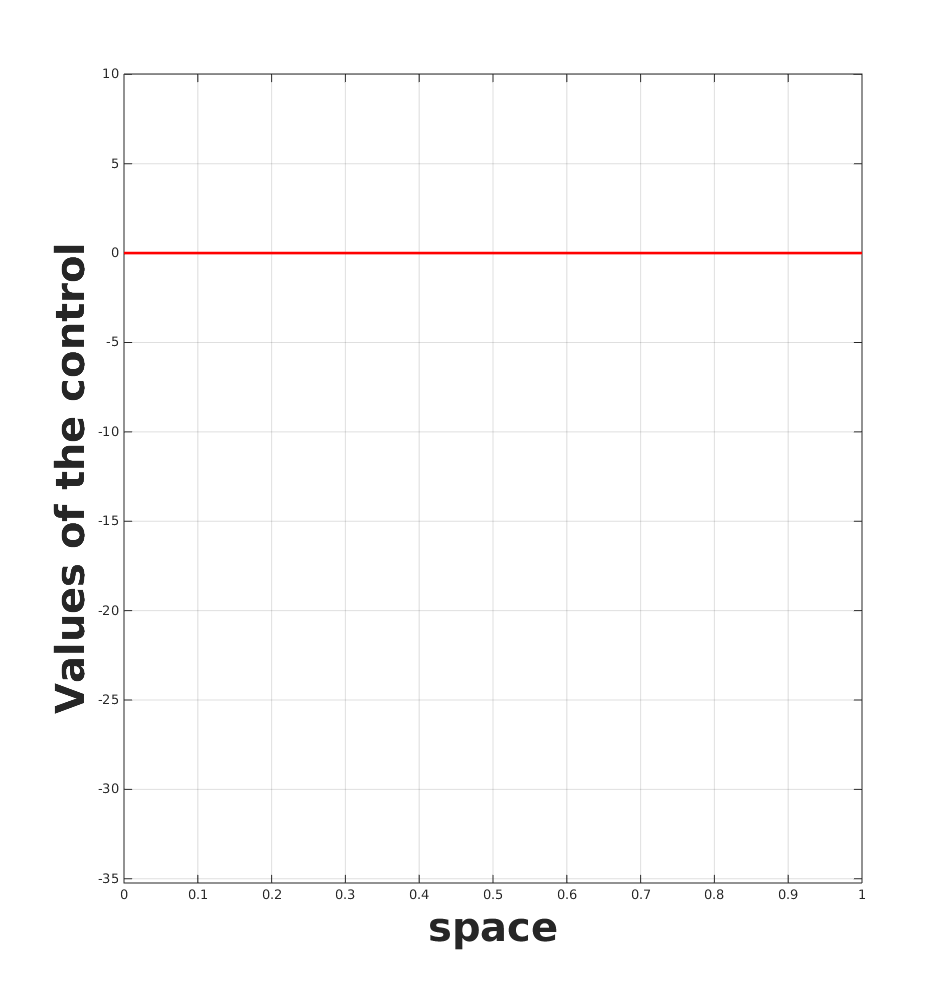} 
			\begin{center}\begin{small} $ t = 30.0000 $ \end{small}\end{center}
		\end{figure}
	\end{minipage}
\end{minipage}
\begin{minipage}{\linewidth}
	\begin{figure}[H]
		\caption{Values of the state $y_1$ and the control $u_1$, for different values of the time. \textcolor{white}{}\label{superfigure}}
	\end{figure}
\end{minipage}\\
\hfill \\
\FloatBarrier
%\newpage

\appendix

\section{Regularity of Nemytskii operators} \label{appendix}

We recall in this section a technical result concerning the regularity of Nemytskii operators. The terminology comes from~\cite[Section 4.3.2]{Tro10}.

Let $E_x$ be a measurable subset of $\R^{n}$. Let $k \in \mathbb{N}$. Let $\phi:E_x \times \R \rightarrow \R$. Assume that $\phi$ is measurable and that for a.\,e.\@ $x \in E_x$, $y \mapsto \phi(x, y)$ is continuously $k$-times differentiable.

\begin{definition} \label{defNemytskii}
We say that $\phi$ satisfies the boundedness conditions of order $k$ (with respect to $x$) if there exists $C>0$ such that:
\begin{eqnarray*}
|D_{y^\ell}^\ell \phi(x,0)| \leq C, & & \text{for all $\ell=0,...,k$}, \quad \text{for a.\,e.\@ $x \in E_x$}.
\end{eqnarray*}
We say that $\phi$ satisfies the local Lipschitz condition of order $k$ (with respect to $y$) if for all $M \geq 0$, there exists $K \geq 0$ such that for all $x \in E_x$, for all $y_1$ and $y_2 \in \R$,
\begin{eqnarray} \label{eqLipschitzCond}
\Big(|y_1| \leq M \text{ and }\ |y_2| \leq M \Big) & \Rightarrow &
|D_y^k \phi(x,y_2)-D_y^k \phi(x,y_1)| \leq K |y_2-y_1|.
\end{eqnarray}
\end{definition}
Note that if the boundedness and Lipschitz conditions of order $k$ are satisfied, then for all $M \geq 0$, for $\ell= 0,...,k$, there exist a constant $C>0$ and a constant $K>0$ such that for all $x$ and for all $y_1$ and $y_2 \in \R$,
\begin{eqnarray} \label{eqNemytskiiGal}
\left\{\begin{array} {l}
|y_1| \leq M, \\ |y_2| \leq M ,
\end{array}\right.
 & \Rightarrow &
\left\{ \begin{array} {l}
|D_{y^\ell}^\ell \phi(x,y_1) | \leq C, \\
|D_{y^\ell}^\ell \phi(x,y_2)-D_{y^\ell}^\ell \phi(x,y_2)| \leq K |y_2-y_1|.
\end{array}\right.
\end{eqnarray}
In particular, if the boundedness and Lipschitz conditions of order $k$ are satisfied,
then they are also satisfied for any smaller order.
The following lemma is a direct extension of~\cite[Lemma 4.12]{Tro10}.

\begin{lemma} \label{lemmaNemytskii}
Assume that $\phi$ satisfies the boundedness condition of order $k$ and the Lipschitz condition of order $k$.
Then, the following mapping, called Nemytskii operator associated with $\phi$, is $k$ times continuously Fr\'echet differentiable:
\begin{eqnarray*}
\Phi: y \in L^\infty(E_x,\R) \mapsto \Big( x \in E_x \mapsto \phi(x, y(x)) \Big) \in L^\infty(E_x, \R).
\end{eqnarray*}
For all $\ell=1,...,k$, for all $z_1,..., z_\ell \in L^\infty(E_x,\R)$,
\begin{eqnarray} \label{eqDerivativeNem}
D_{y^\ell}^\ell \Phi(y)\big( z_1,...,z_\ell \big)
= \Big( x \mapsto D_{y^\ell}^\ell \phi(x, y(x))\big( z_1(x),...,z_\ell(x) \big) \Big) \in L^\infty(E_x,\R).
\end{eqnarray}
\end{lemma}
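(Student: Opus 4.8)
The plan is to argue by induction on the order of differentiation $\ell \in \{0,1,\dots,k\}$, reducing at each stage the differentiability of the $\ell$-th derivative to a single instance of first-order differentiability of a substitution operator taking values in a space of bounded multilinear forms on $L^\infty(E_x,\R)$. Throughout, the measurability of $x \mapsto D_{y^\ell}^\ell \phi(x,y(x))$ for measurable $y$ is the standard composition property of Carathéodory functions and will be used tacitly. First I would dispose of the case $\ell = 0$: for $y$ with $\|y\|_{L^\infty} \le M$, the boundedness and Lipschitz conditions, through~\eqref{eqNemytskiiGal} with $\ell = 0$, give $|\phi(x,y(x))| \le C + KM$ for a.\,e.\@ $x$, so $\Phi(y) \in L^\infty(E_x,\R)$ is well defined, and the bound $\|\Phi(y_1) - \Phi(y_2)\|_{L^\infty} \le K \|y_1-y_2\|_{L^\infty}$ yields local Lipschitz continuity.

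The core of the argument is the first-order step. Fix $\ell \in \{1,\dots,k\}$ and consider the substitution operator $\Psi \colon y \mapsto \big( x \mapsto D_{y^{\ell-1}}^{\ell-1}\phi(x,y(x)) \big)$, viewed as a map into the bounded $(\ell-1)$-linear forms on $L^\infty(E_x,\R)$; its image is bounded in the $L^\infty$-sense by~\eqref{eqNemytskiiGal}. I would show that its Fr\'echet derivative at $y$ is the substitution operator built from $D_{y^\ell}^\ell\phi$. Writing the pointwise fundamental theorem of calculus,
\[
D_{y^{\ell-1}}^{\ell-1}\phi(x,y(x)+z(x)) - D_{y^{\ell-1}}^{\ell-1}\phi(x,y(x)) - D_{y^\ell}^\ell\phi(x,y(x))\, z(x)
= \int_0^1 \big[ D_{y^\ell}^\ell\phi(x,y(x)+tz(x)) - D_{y^\ell}^\ell\phi(x,y(x)) \big] z(x) \dd t,
\]
and invoking the local Lipschitz bound on $D_{y^\ell}^\ell\phi$ from~\eqref{eqNemytskiiGal} (valid whenever $\|y\|_{L^\infty}, \|z\|_{L^\infty} \le M$), the right-hand side is bounded in modulus by $K \|z\|_{L^\infty} |z(x)| \le K \|z\|_{L^\infty}^2$. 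Dividing by $\|z\|_{L^\infty}$ and letting $z \to 0$ establishes Fr\'echet differentiability with the announced derivative, while continuity of this derivative in $y$ follows from the same local Lipschitz estimate applied to $D_{y^\ell}^\ell\phi$. Feeding this back into the induction yields that $\Phi$ is $\ell$-times continuously differentiable with the pointwise formula~\eqref{eqDerivativeNem}, for every $\ell$ up to $k$.

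The delicate point, and the only one requiring more than the mere existence of the classical derivatives of $\phi$, is the top order $\ell = k$. For $\ell < k$ the local Lipschitz continuity of $D_{y^\ell}^\ell\phi$ needed above comes for free from the local boundedness of $D_{y^{\ell+1}}^{\ell+1}\phi$ via the mean value theorem; but at $\ell = k$ no higher derivative is available, and it is precisely the assumed Lipschitz condition of order $k$ that supplies both the remainder estimate ensuring differentiability of $D^{k-1}\Phi$ and the continuity of $D^k\Phi$. This is where the hypotheses are used at full strength. Since the argument is otherwise a verbatim repetition of the reasoning in~\cite[Lemma 4.12]{Tro10}, the remaining verifications -- boundedness and linearity of the multilinear candidate operators, and the routine measurability statements -- I would leave as standard.
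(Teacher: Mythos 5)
Your proof is correct and takes essentially the same approach as the paper: both run an induction whose step reduces to a first-order differentiability statement for a substitution operator, with the quadratic remainder supplied by the order-$k$ Lipschitz condition --- your fundamental-theorem-of-calculus identity is precisely the mechanism behind the paper's estimates~\eqref{eqNemytskiiEstimate} and~\eqref{eqNewToBeProved}, the only cosmetic difference being that the paper freezes the directions $z_1,\dots,z_q$ so as to invoke the scalar base case of~\cite[Lemma 4.12]{Tro10}, whereas you differentiate the multilinear-form-valued substitution map directly. Your observation that the Lipschitz hypothesis is needed at full strength only at the top order $\ell=k$, the lower orders being covered via~\eqref{eqNemytskiiGal}, likewise matches the paper's remark preceding the lemma.
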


\begin{proof}
To simplify the notation, we write $\| \cdot \|_{\infty}$ instead of $\| \cdot \|_{L^\infty(E_x,\R)}$.
We prove the lemma by induction. The case $k=1$ is treated in~\cite[Lemma 4.12]{Tro10}. It is moreover proved that for all $M$, for all $y_1$ and $y_2 \in L^\infty(E_x,\R)$ such that $\| y_1 \|_{\infty} \leq M$ and $\| y_2 \|_{\infty} \leq M$,
\begin{eqnarray} \label{eqNemytskiiEstimate}
\| \Phi(y_2)-\Phi(y_1)-D\Phi(y_1)(y_2-y_1) \|_{\infty}
& \leq & K \| y_2 - y_1 \|_{\infty}^2,
\end{eqnarray}
where $K$ is the constant given by~\eqref{eqLipschitzCond}, for $k= 1$.

Let $q \in \mathbb{N}$ and assume that the lemma is satisfied for $k= q$. Assume that $\phi$ satisfies the boundedness and the Lipschitz conditions at order $q+1$. For all $y$, denote by $A: L^\infty(E_x,\R) \mapsto \mathcal{L}( L^\infty(E_x,\R)^{q+1}, L^\infty(E_x,\R) )$ the mapping defined by the right-hand side of~\eqref{eqDerivativeNem}:
\begin{eqnarray*}
A(y)(z_1,...,z_{q+1}) & = &
D_{y^{q+1}}^{q+1} \phi(\cdot, y(\cdot))\big( z_1(\cdot),...,z_{q+1}(\cdot) \big).
\end{eqnarray*}
By~\eqref{eqNemytskiiGal}, for all $M \geq 0$, there exist constants $C >0$ and $K>0$ such that for all $y_1$ and $y_2 \in L^\infty(E_x,\R)$ satisfying $\| y_1 \|_{\infty} \leq M$ and $\| y_2 \|_{\infty} \leq M$, for all $z_1$,...,$z_{q+1}$ in $L^\infty(E_x,\R)$,
\begin{eqnarray*}
 \| A(y_1)(z_1,...,z_{q+1}) \|_{\infty} & \leq & 
 C \| z_1 \|_{\infty} ... \| z_{q+1} \|_{\infty}, \\
 \| A(y_2)(z_1,...,z_{q+1})-A(y_1)(z_1,...,z_{q+1}) \|_{\infty}
& \leq & K \|y_2-y_1\|_{\infty} \| z_1 \|_{\infty} ... \| z_{q+1} \|_{\infty}.
\end{eqnarray*}
This proves that for all $y \in L^\infty(E_x,\R)$, the mapping $A(y)$ is a continuous $(q+1)$-linear mapping and that $A$ is locally Lipschitz-continuous, thus continuous. We prove now that for all $M \geq 0$, for all $y_1,y_2 \in L^\infty(E_x,\R)$ such that $\| y_1 \|_{\infty} \leq M$ and $\| y_2 \|_{\infty} \leq M$, for all $z:=(z_1,...,z_{q})$ in $[L^\infty(E_x,\R)]^q$,
\begin{eqnarray}
 \| D_{y^q}^q \Phi (y_2)(z)-D_{y^q}^q(y_1) \Phi (z) -A(y_1)(z,y_2-y_1) \|_{\infty} 
& \leq & K \| y_2 - y_1 \|_{\infty}^2 \prod_{i=1}^q\| z_i \|_{\infty} , \label{eqNewToBeProved}
\end{eqnarray}
which is enough to prove that $\Phi$ is continuously $(q+1)$-times Fr\'echet differentiable.
Let us fix $z = (z_1,...,z_q)$. Observe that $\Psi: y \mapsto D_{y^q}^q \Phi(y) (z)$ is the Nemytskii operator associated with:
\begin{eqnarray} \label{eqNemDerivative}
\psi \colon (x,y) \in E_x \times \R \mapsto D_{y^q}^q \phi(x, y(x))(z(x)).
\end{eqnarray}
It is easy to check that the above function satisfies the boundedness and the Lipschitz conditions of order 1. In particular, for all $M \geq 0$, for all $y_1$ and $y_2 \in \R$ with $|y_1| \leq M$ and $|y_2| \leq M$, for all $x \in E_x$,
\begin{eqnarray} \label{eqNemForPsi}
|D_y \psi(x,y_2)- D_y \psi(x,y_1)| & \leq & K |y_2-y_1| \prod_{i=1}^q\| z_i \|_{\infty} ,
\end{eqnarray}
where $K$ is given by~\eqref{eqNemytskiiGal}, for $\ell= q+1$. This proves that $\Psi$ is continuously differentiable and by~\eqref{eqDerivativeNem}, $D_y \Psi(y)\delta y= A(y)\delta y$. By~\eqref{eqNemytskiiEstimate} and~\eqref{eqNemForPsi},
\begin{eqnarray*}
\| \Psi(y_2)-\Psi(y_1)-A(y)(z,y_2-y_1) \|_{\infty} & \leq & 
K \| y_2 - y_1 \|_{\infty}^2 \prod_{i=1}^q\| z_i \|_{\infty},
\end{eqnarray*}
which proves~\eqref{eqNewToBeProved} and therefore concludes the proof.
\end{proof}

As mentioned in the proof, the derivative of order $\ell$ of the Nemytskii operator is the Nemytskii operator of the pointwise derivative of order $\ell$. For $\ell=1$ in particular, this means that for all $y \in L^\infty(E_x,\R)$, $D\Phi(y)$ can be seen as an element of $\mathcal{L} \big( L^\infty(E_x,\R),L^\infty(E_x,\R) \big)$ (as in~\eqref{eqNemDerivative}) or as an element of $L^\infty(E_x,\R)$:
\begin{eqnarray*}
x \mapsto D_y \phi(x,y(x)).
\end{eqnarray*}
In the article, one of the two points of view is adopted depending on the context. For example, in the definition of the adjoint equation~\eqref{eqAdjoint}, $D_y\Phi_1(\tau,\bar{y}(1))$ and $D_y \Phi_2(\bar{y}(2))$ are seen as element of $L^\infty(\Omega)$.

\begin{lemma} \label{lemmaNemytskii2}
Assume that $\phi$ satisfies the boundedness and the Lipschitz conditions of order $k$. Then, for all $\ell =1,...,k$, for all $y \in L^\infty(E_x,\R)$, the mapping $D_{y^\ell}^\ell \Phi(y)$ can be extended to a $\ell$-linear continuous mapping from $[L^\ell(E_x,\R)]^\ell$ to $L^1(E_x,\R)$. Moreover, for all $M \geq 0$, there exists $R \geq 0$ such that for all $y_1$ and $y_2 \in L^\infty(E_x,\R)$, for all $z = (z_1,...,z_\ell) \in [L^{\ell}(E_x,\R)]^{\ell}$,
\begin{small}
\begin{eqnarray*}
\left\{
\begin{array} {l} 
\| y_1 \|_{L^\infty(E_x,\R)} \leq M \\
\| y_2 \|_{L^\infty(E_x,\R)} \leq M 
\end{array}
\right.
& \Rightarrow &
 \| \big( D_{y^\ell}^{\ell}(y_2)- D_{y^\ell}^\ell(y_1) \big)(z) \|_{L^1(E_x,\R)}
\leq R \| y_2-y_1 \|_{L^\infty(E_x,\R)} 
\prod_{i=1}^{\ell}\| z_i \|_{L^\ell(E_x,\R)} .
\end{eqnarray*}
\end{small}
\end{lemma}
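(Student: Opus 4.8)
The plan is to exploit the pointwise character of the derivatives of the Nemytskii operator, already established in Lemma~\ref{lemmaNemytskii}, and then to reduce every claim to a single application of the generalized H\"older inequality with balanced exponents. No fixed-point or density machinery is really needed beyond recognizing the right exponents.

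First I would record the explicit pointwise formula. By~\eqref{eqDerivativeNem} in Lemma~\ref{lemmaNemytskii}, for $z = (z_1,\dots,z_\ell) \in [L^\infty(E_x,\R)]^\ell$ the function $D_{y^\ell}^\ell \Phi(y)(z)$ equals $x \mapsto D_{y^\ell}^\ell \phi(x,y(x))(z_1(x),\dots,z_\ell(x))$. Since $y$ is a scalar variable, the symmetric $\ell$-linear form $D_{y^\ell}^\ell \phi(x,y(x))$ acts on $\R^\ell$ simply by the product of its arguments, so that $D_{y^\ell}^\ell \Phi(y)(z)(x) = D_{y^\ell}^\ell \phi(x,y(x))\, z_1(x)\cdots z_\ell(x)$ for a.\,e.\@ $x$. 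Thus $D_{y^\ell}^\ell \Phi(y)$ is nothing but multiplication by the bounded scalar function $x \mapsto D_{y^\ell}^\ell \phi(x,y(x))$, and this product formula makes sense verbatim for $z \in [L^\ell(E_x,\R)]^\ell$, which provides the desired extension.

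Next I would prove the continuity bound. Fix $M \geq 0$ and take $y$ with $\|y\|_{L^\infty} \leq M$. By the boundedness statement in~\eqref{eqNemytskiiGal} (valid at order $\ell$ since the assumptions hold at order $k \geq \ell$, hence at every smaller order), there is $C>0$, independent of $x$ and of $y$, with $|D_{y^\ell}^\ell \phi(x,y(x))| \leq C$ for a.\,e.\@ $x$. Consequently
\[
\big\| D_{y^\ell}^\ell \Phi(y)(z) \big\|_{L^1(E_x,\R)}
\leq C \int_{E_x} |z_1(x)|\cdots |z_\ell(x)| \dd x
\leq C \prod_{i=1}^\ell \| z_i \|_{L^\ell(E_x,\R)},
\]
where the last step is the generalized H\"older inequality with the $\ell$ conjugate exponents all equal to $\ell$, so that $\sum_{i=1}^\ell 1/\ell = 1$. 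This shows $D_{y^\ell}^\ell \Phi(y)$ is a continuous $\ell$-linear mapping from $[L^\ell(E_x,\R)]^\ell$ to $L^1(E_x,\R)$.

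Finally I would establish the Lipschitz estimate by repeating the computation on the difference. For $y_1,y_2$ with $\|y_i\|_{L^\infty}\leq M$, the pointwise formula gives
\[
\big( D_{y^\ell}^\ell \Phi(y_2) - D_{y^\ell}^\ell \Phi(y_1) \big)(z)(x)
= \big( D_{y^\ell}^\ell \phi(x,y_2(x)) - D_{y^\ell}^\ell \phi(x,y_1(x)) \big) z_1(x)\cdots z_\ell(x).
\]
Applying the Lipschitz statement in~\eqref{eqNemytskiiGal} at order $\ell$ yields, for some $K>0$ depending only on $M$, the pointwise bound $|D_{y^\ell}^\ell \phi(x,y_2(x)) - D_{y^\ell}^\ell \phi(x,y_1(x))| \leq K|y_2(x)-y_1(x)| \leq K\|y_2-y_1\|_{L^\infty}$. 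Inserting this and invoking the same balanced H\"older inequality gives the claimed bound with $R = K$. The only point requiring attention is the choice of exponents in the H\"older step, namely $\ell$ for each factor; once that is recognized, the argument is routine, so I do not expect a genuine obstacle here.
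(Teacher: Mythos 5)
Your proposal is correct and takes essentially the same route as the paper, whose entire proof is the one-line observation that the lemma follows directly from the pointwise formula~\eqref{eqDerivativeNem} and H\"older's inequality. You have merely made explicit what the paper leaves implicit: the reduction to multiplication by the bounded scalar function $x \mapsto D_{y^\ell}^\ell \phi(x,y(x))$, the use of~\eqref{eqNemytskiiGal} at order $\ell \leq k$ for both the boundedness and Lipschitz estimates, and the generalized H\"older inequality with the $\ell$ balanced exponents summing to $1$.
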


\begin{proof}
The two statements of the lemma are a direct consequence of~\eqref{eqDerivativeNem} and H\"older's inequality.
\end{proof}

\section*{Acknowledgments}
The authors gratefully acknowledge support by the Austrian Science Fund (FWF) special
research grant SFB-F32 "Mathematical Optimization and Applications in Biomedical
Sciences", and by the ERC advanced grant 668998 (OCLOC) under the EU's H2020
research program.

\bibliographystyle{plain}
\bibliography{biblio}

\end{document}